\documentclass[11pt,reqno]{amsart}
\usepackage{amsfonts,amssymb,amsmath, amsthm}
\usepackage{graphicx}
\usepackage{caption, subcaption} 
\usepackage{systeme}
\usepackage{pgf,tikz,pgfplots}
\usepackage{kotex}
\usepackage[margin=1in]{geometry}

\usepackage{comment}
\usepackage{hyperref}
\usepackage{array}

\pgfplotsset{compat=1.15}
\usepgfplotslibrary{fillbetween}
\usepackage{mathrsfs}
\usetikzlibrary{arrows}
\usetikzlibrary{calc}
\newtheorem{theorem}{Theorem}[section]
\newtheorem{lemma}{Lemma}[section]

\newtheorem{proposition}{Proposition}[section]
\newtheorem{remark}{Remark}[section]

\theoremstyle{definition}

\theoremstyle{remark}

\numberwithin{equation}{section}



\newcommand{\rd}{\partial}




\newcommand{\R}{\mathbb{R}}



\newcommand{\sX}{\dot{\mathbf{X}}}

\newcommand{\bX}{\mathbf{X}}



\def \rd {\partial}
\def \d {\delta}
\def \ubar {\overline{u}}

\def \r {\rho}
\def \th {\theta}
\def \b {\beta}

\def \Rp {\mathbb{R}_+}
\def \intRp {\int_{\mathbb{R}_+}}

\def \rhotilX {\bar{\rho}^{\mathbf{X}, \beta}}

\def \Util {\bar{U}}

\def \util {\bar{u}}
\def \rhotil {\bar{\rho}}
\def \thtil {\bar{\theta}}

\def \RR {\mathbb{R}}

\newcommand{\bbr}{\mathbb R}

\newcommand{\e}{\varepsilon}





\newcommand{\norm}[1]{\left\lVert#1\right\rVert}

\usepackage{float}
\restylefloat{table}
\restylefloat{figure}
\usepackage{dsfont}

\title[Shock stability for the Navier-Stokes-Fourier system in the half space]{Stability of viscous shock for the Navier-Stokes-Fourier system: outflow and impermeable wall problems}
\author[Huang]{Xushan Huang}
\address[Xushan Huang]{\newline Department of Mathematical Sciences \newline Korea Advanced Institute of Science and Technology, Daejeon  34141, Republic of Korea}
\email{xushanhuang@kaist.ac.kr}

\author[Lee]{Hobin Lee}
\address[Hobin Lee]{\newline Department of Mathematical Sciences \newline Korea Advanced Institute of Science and Technology, Daejeon  34141, Republic of Korea}
\email{lcuh11@kaist.ac.kr}

\author[Oh]{HyeonSeop Oh}
\address[HyeonSeop Oh]{\newline Department of Mathematical Sciences \newline Korea Advanced Institute of Science and Technology, Daejeon  34141, Republic of Korea}
\email{ohs2509@kaist.ac.kr}

\subjclass[2020]{35Q35, 76N06} 

\keywords{$a$-contraction with shifts; Navier-Stokes-Fourier system; outflow problems; impermeable wall problems; viscous shock; stability}

\thanks{\textbf{Acknowledgment.} X. Huang was supported by the National Research Foundation of Korea (NRF) grant funded by the Korea government (MSIT) (No. RS-2019-NR040050). H. Lee and H. Oh were partially supported by the National Research Foundation of Korea (RS-2024-00361663 and NRF-2019R1A5A1028324). The authors thank Professor Moon-Jin Kang for valuable comments.
}





\begin{document}

\begin{abstract} 
We investigate the time-asymptotic stability of solutions to the one-dimensional Navier\allowbreak-Stokes\allowbreak-Fourier system in the half-space, focusing on the outflow and impermeable wall problems. When the prescribed boundary and far-field conditions form an outgoing viscous shock, we prove that the solution converges to the viscous shock profile, up to a dynamical shift, provided that the initial perturbation and the shock amplitude are sufficiently small. In order to obtain our results, we employ the method of $a$-contraction with shifts. Although the impermeable wall problem is technically simpler to analyze in Lagrangian mass coordinates, the outflow problem leads to a free boundary in that framework. Therefore, we use Eulerian coordinates to provide a unified approach to both problems. This is the first result on the time-asymptotic stability of viscous shocks for initial-boundary value problems of the Navier-Stokes-Fourier system for the outflow and impermeable wall cases.
\end{abstract}

\maketitle
	\tableofcontents
\section{Introduction}
\setcounter{equation}{0}
We consider the initial-boundary value problem (IBVP) for the one-dimensional Navier-Stokes-Fourier (NSF) system in Eulerian coordinates on the half-space $\mathbb{R}_+ := (0,\infty)$: 
\begin{equation} \label{eq:NSF}
	\begin{cases}
		& \rho_t + (\rho u)_x = 0, \quad x\geq 0,\quad t\geq 0,\\
		&(\rho u)_t + (\rho u^2 + p)_x = \mu u_{xx}, \\
		&E_t + \left(uE+ pu\right)_x = \kappa \theta_{xx} + \mu(uu_x)_x,
	\end{cases}
\end{equation}
where $\rho =\rho(t,x), u= u(t,x)$, and $ \theta = \theta(t,x) $ represent the fluid density, velocity, and absolute temperature, respectively. In addition, $E=\rho (e+ \frac{u^2}{2})$ is the total energy function. For an ideal polytropic gas, the pressure function $p$ and the internal energy function $e$ are given by 
$$p(\rho, \theta) = R\rho \theta,\qquad e(\rho, \theta) = \frac{R}{\gamma -1} \theta + const,$$
with $R>0$, $\gamma>1$ being both constants related to the fluid, while $\mu>0$ and $\kappa>0$ denote the viscosity and the heat-conductivity.\\

We impose the initial data $(\rho_0(x), u_0(x), \theta_0(x))$ for \eqref{eq:NSF} satisfying
\begin{equation}\label{eq:initial}
		\inf_{x\in \mathbb{R}_+} \rho_0(x) > 0, \quad \inf_{x\in \mathbb{R}_+} \theta_0(x) > 0, \quad (\rho_0(x), u_0(x), \theta_0(x) ) \to(\rho_+,u_+, \theta_+) \, \,  \text{as }  x \to \infty,
\end{equation}
where $\rho_+>0, u_+$, and $\theta_+>0$ are prescribed constants.\\

According to the sign of the velocity $u_-$ on the boundary, there are three types of boundary conditions for the IBVP of the NSF system, as follows. \\

\noindent Case 1. Outflow problem (negative velocity on the boundary)
\begin{equation}\label{outflow}
	u(t,0) = u_- <0,  \quad \theta(t,0) = \theta_- >0, \quad t>0;
\end{equation}
Case 2. Impermeable wall problem (zero velocity on the boundary)
\begin{equation}\label{imperable}
	u(t,0) = u_- =0, \quad \theta(t,0) = \theta_- >0, \quad t>0;
\end{equation}
Case 3.  Inflow problem (positive velocity on the boundary)
\begin{equation*} 
	\rho (t,0) = \rho_- >0,  \quad u(t,0) = u_- >0,   \quad \theta(t,0) = \theta_- >0, \quad t>0.
\end{equation*}

In this paper, we focus on Cases 1 and 2: the outflow problem \eqref{outflow} and the impermeable wall problem \eqref{imperable}. Accordingly, we assume that the initial data \eqref{eq:initial} satisfy the compatibility condition corresponding to either \eqref{outflow} or \eqref{imperable}. 

It is worth noting that in Case 3, the boundary density $\rho_-$ must be prescribed to ensure the well-posedness of the system \eqref{eq:NSF}.\\

The goal of this paper is to study the large-time behavior of solutions in Cases 1 and 2. It is well known that the large-time behavior of solutions to \eqref{eq:NSF} is closely related to the Riemann problem for the (inviscid) full compressible Euler system (\eqref{eq:NSF} with $\mu = \kappa = 0$). To capture the characteristic structure of the Euler system, we split the state space $\Omega:=\{(\rho,u,\theta):\rho>0,\theta>0\}$ into 6 regions based on the sign of the velocity $u$ and the sign of eigenvalues $\lambda_i=\lambda_i(\rho,u,\theta)$, $i=1,2,3$ of the Euler system (see the figure below):
\begin{figure}[h]
	\centering
    \begin{minipage}{0.50\textwidth}
        \begin{align*}
				&\Omega_{super}^+ := \{(\rho,u,\th)\in \Omega: 0<\lambda_1 < \lambda_2 <\lambda_3\}, \\[1em]
            &\Gamma_{trans}^+ := \{(\rho,u,\th)\in \Omega: 0=\lambda_1 < \lambda_2 <\lambda_3\}, \\[1em]
            &\Omega_{sub}^+ := \{(\rho,u,\th)\in \Omega: u>0, \, \,  \lambda_1 < 0 < \lambda_3\},  \\[1em]
            &\Omega_{sub}^- := \{(\rho,u,\th)\in \Omega: u<0, \, \,  \lambda_1 < 0 < \lambda_3\}, \\[1em]
            &\Gamma_{trans}^- := \{(\rho,u,\th)\in \Omega: \lambda_1 < \lambda_2 < 0 = \lambda_3\}, \\[1em]
            &\Omega_{super}^- := \{(\rho,u,\th)\in \Omega: \lambda_1 < \lambda_2 < \lambda_3 < 0\}.
        \end{align*}
    \end{minipage}%
    \hspace{3em} 
    \begin{minipage}{0.30\textwidth} 
        \centering
        \includegraphics[width=\textwidth]{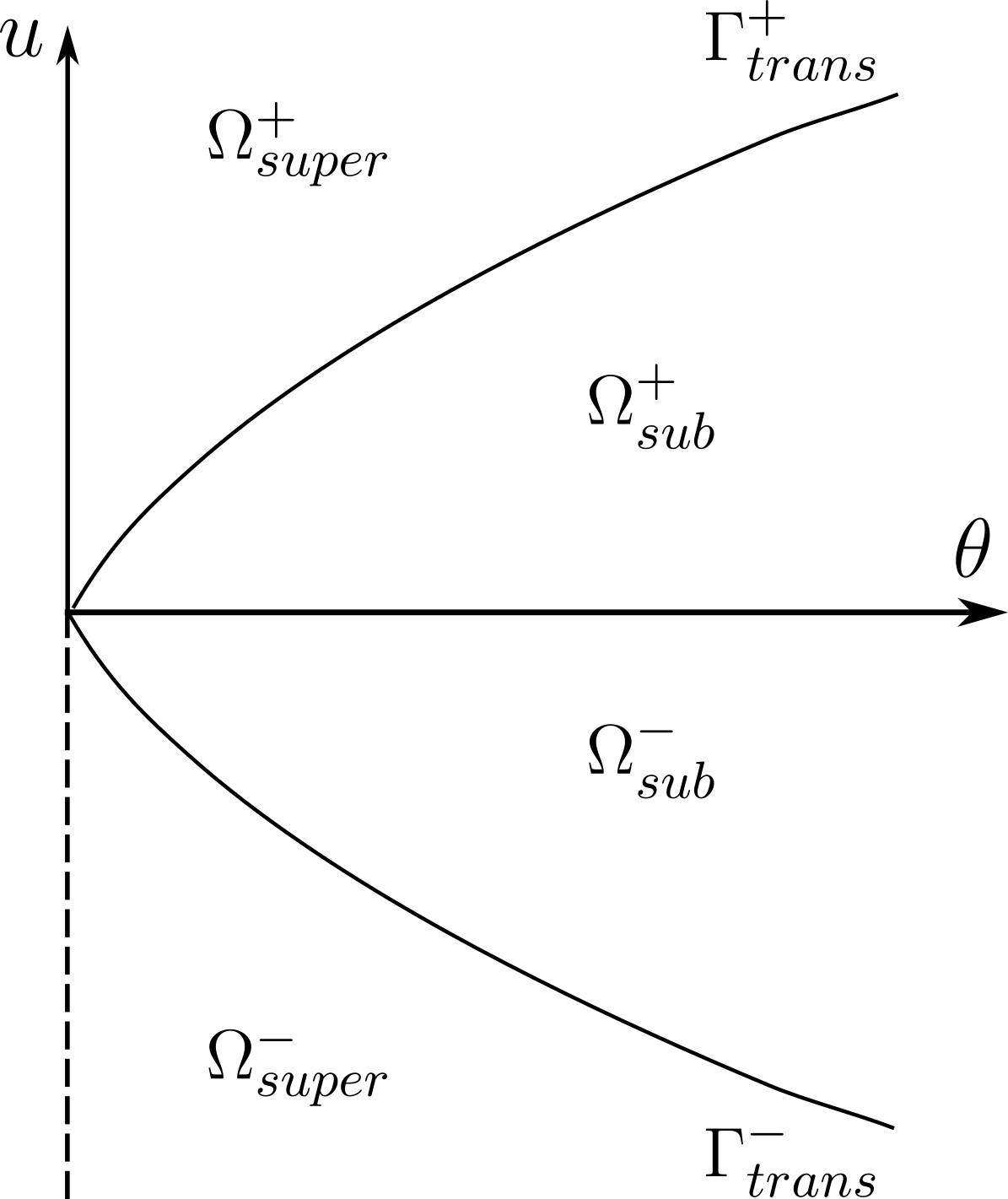}
    \end{minipage}
\end{figure}

Since the eigenvalues of the Euler system are given by
\[
\lambda_1(\rho,u,\theta) = u - c, \quad \lambda_2(\rho,u,\theta) = u, \quad \lambda_3(\rho,u,\theta) = u + c,
\]
where $c=\sqrt{\gamma R \theta}$ is the sound speed, the above regions can be equivalently written as follows:
\begin{align*}
	\begin{split}
	\begin{aligned}
	&(\rho,u,\th) \in \Omega_{super}^+ && \iff \quad u>c, &&(\rho,u,\th) \in \Gamma_{trans}^+ && \iff \quad u=c,\\
	&(\rho,u,\th) \in \Omega_{sub}^+ && \iff  \quad 0<u<c, &&(\rho,u,\th) \in \Omega_{sub}^- && \iff  \quad -c<u<0,\\
	&(\rho,u,\th) \in \Gamma_{trans}^- &&\iff  \quad u=-c, &&(\rho,u,\th) \in \Omega_{super}^- && \iff  \quad u<-c.	
	\end{aligned}
	\end{split} 
\end{align*}
$\bullet$ {\bf Viscous shock waves as asymptotic profiles.} We are interested in cases where viscous shock waves appear as asymptotic profiles. The viscous shock wave connecting the two end states $(\rho_-, u_-, E_-)$ and $(\rho_+, u_+, E_+)$ is a traveling wave solution on $\mathbb{R}$ that satisfies the Rankine-Hugoniot condition and the Lax entropy conditions:
\begin{equation}\label{eq:RH}
	\begin{aligned}
		&\exists \, \sigma \, \, \text{s.t. } 	
		\begin{cases}
			& -\sigma (\rho_+- \rho_- ) + (\rho_+u_+ -\rho_- u_-) =0,\\
			&-\sigma(\rho_+u_+ - \rho_- u_-) + (\rho_+ u^2_+ + p_+ - \rho_- u_-^2 - p_-)=0, \\
			&-\sigma\left( E_+- E_-\right) + \left( u_+E_+ + p_+u_+ -u_-E_- - p_-u_-  \right)= 0,
		\end{cases}\\
		&\text{and either } \rho_- < \rho_+, u_- > u_+ \, \, \text{and } \theta_-<\theta_+ \, \, \text{or } \rho_- > \rho_+, u_- > u_+ \, \, \text{and } \theta_- > \theta_+ \, \, \text{holds}.
	\end{aligned}
\end{equation}
In other words, for any two constant states $(\rho_\pm, u_\pm, E_\pm)$ satisfying \eqref{eq:RH}, there exists a viscous shock wave $(\rhotil, \util, \overline{E})(\xi):=(\rhotil, \util, \overline{E})(x-\sigma t)$ given by the solution to the following ODEs:
\begin{equation}\label{eq:Vshock}
    \begin{cases}
    		& -\sigma\bar{\rho}'+(\bar{\rho}\bar{u})'=0, \qquad ' = \frac{d}{d\xi}, \\
    		& -\sigma (\bar{\rho} \bar{u})' + (\bar{\rho} \bar{u}^2 + \bar{p})' = \mu \bar{u}''  ,\\
    		 & -\sigma \bar{E}' + (\bar{E}\bar{u} + \bar{p}\bar{u})' = \kappa \bar{\theta}'' + \mu (\bar{u}\bar{u}')', \\
    		&  (\bar{\rho}, \bar{u}, \overline{E})(-\infty) =  (\rho_-,u_-,E_-), \quad (\bar{\rho}, \bar{u}, \overline{E})(+\infty) = (\rho_+,u_+,E_+),
    \end{cases}
\end{equation}
where $\bar{E} := \bar{\rho} (\bar{e}+ \frac{\bar{u}^2}{2})$, and $\bar{p}:=p(\bar{\rho}, \bar{\theta})$. Here, if $\rho_- < \rho_+$, the solution of \eqref{eq:Vshock} is a 1-shock wave with $\sigma = u_+ - \sqrt{\frac{\rho_-}{\rho_+}\left(\frac{p_+ - p_-}{\rho_- - \rho_+}\right)}$. If $\rho_- > \rho_+$, the solution of \eqref{eq:Vshock} is a 3-shock wave with $\sigma = u_+ + \sqrt{\frac{\rho_-}{\rho_+}\left(\frac{p_+ - p_-}{\rho_- - \rho_+}\right)}$. It is well-known that the system \eqref{eq:NSF} admits viscous $i$-shock wave $(i=1, 3)$, which are smooth and unique up to translation.\\

In what follows, we investigate the necessary conditions for the asymptotic profile to be an outgoing viscous shock for each of the IBVPs \eqref{eq:NSF}-\eqref{outflow} and \eqref{eq:NSF}-\eqref{imperable}. By the Lax entropy condition in \eqref{eq:RH}, we restrict our analysis to this case $u_->u_+$.

First, for the outflow problem, we consider the case where the far-field state $(\rho_+,u_+,\theta_+)$ lies in either the subsonic region $\Omega^{-}_{sub}$ or the transonic region $\Gamma^{-}_{trans}$. If $0>u_->u_+$ and the boundary value $(u_-,\theta_-)$ belongs to the curve $S^{P}_{3}(\rho_+,u_+,\theta_+)$, which is a curve projected by the $3$-shock curve to the $(u,\theta)$-plane, then there exists a unique $\rho_-$ such that $(\rho_-,u_-,\theta_-)$ lies on the 3-shock curve starting from $(\rho_+,u_+,\theta_+)$. Moreover, the Lax entropy condition $\lambda_3(\rho_+,u_+,\theta_+) < \sigma\left((\rho_-,u_-,\theta_-),(\rho_+,u_+,\theta_+)\right)< \lambda_3(\rho_-,u_-,\theta_-)$ implies that the shock speed $\sigma$ is positive. Consequently, under these conditions, the viscous 3-shock wave is compatible with the prescribed boundary values. Accordingly, the solution $(\rho,u,\theta)(t,x)$ to \eqref{eq:NSF}-\eqref{eq:initial}-\eqref{outflow} is expected to converge to the viscous shock profile $(\bar{\rho}(\xi),\bar{u}(\xi),\bar{\theta}(\xi))$ up to a shift. (See the figure below.)

\begin{figure}[h]
    \centering
    \includegraphics[width=0.55\textwidth]{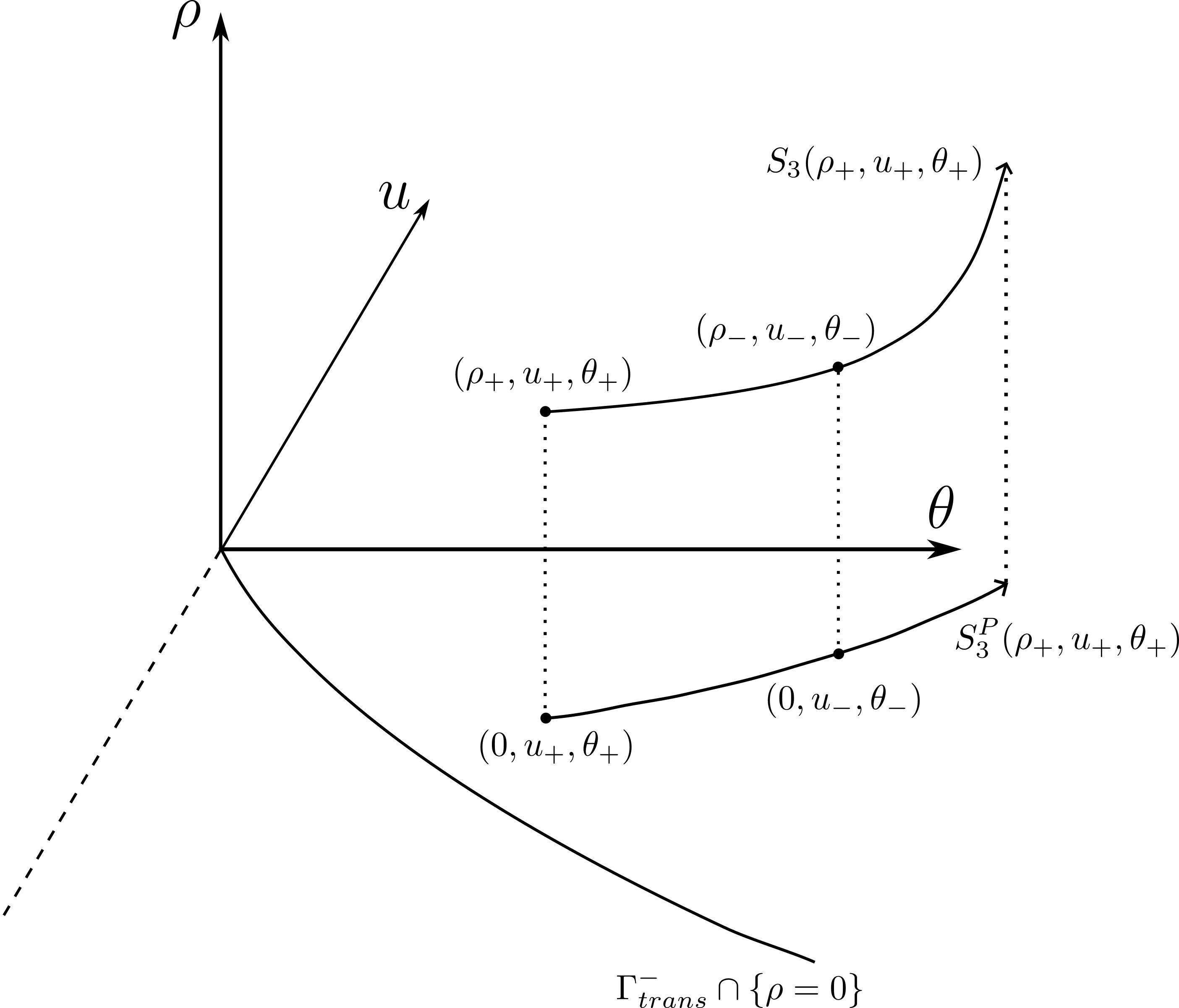}
\end{figure}

\begin{remark}
    When $(\r_+, u_+, \th_+) \in \Omega_{super}^-$, a single viscous 3-shock wave may appear as an asymptotic profile. This situation can occur when $(u_-,\th_-)$ lies in the subsonic region, in which case the shock strength is generally not small. Therefore, this case cannot be treated by our approach, which is applicable to the stability of weak viscous shock waves. We leave this case for future work.
\end{remark}

Now, we consider the impermeable wall problem \eqref{eq:NSF}-\eqref{eq:initial}-\eqref{imperable}. If $u_+<0$ and $(u_-,\theta_-) \in S_3^P(\rho_+, u_+, \theta_+)$, then we can uniquely determine the value $\rho_-$ satisfying $(\rho_-, u_-, \theta_-) \in S_3(\rho_+, u_+, \theta_+)$. In this case, we can expect that the solution $(\rho, u, \theta)$ to the impermeable wall problem \eqref{eq:NSF}-\eqref{eq:initial}-\eqref{imperable} converges to the viscous 3-shock wave. Here, from the Rankine-Hugoniot condition \eqref{eq:RH}$_1$, it follows that the $3$-shock wave is outgoing.

\begin{remark}
    In \cite{MA99}, all possible asymptotic patterns were classified for the outflow and impermeable wall problems of the isentropic Navier-Stokes (NS) system. However, for the NSF system, to the best of our knowledge, the classification of asymptotic profiles remains open. In particular, for the outflow problem, large boundary layer solutions have not yet been extensively studied. For the impermeable wall problem, apart from the case considered in this paper, even the possible asymptotic patterns for more general configurations have not yet been classified.
\end{remark}

\subsection{Literature review}
There has been plenty of literature on the IBVP for (viscous) conservation laws. First, we refer to \cite{AMA97, AC97, AMS24, BRN79,DL88, FS01, KSX97} for studies on the inviscid conservation laws. Now, we focus on the well-posedness of viscous conservation laws. For the Cauchy problem, it is well known that solutions to viscous conservation laws converge to elementary waves, such as viscous shock waves, rarefaction waves, viscous contact waves, or their compositions (see, for example, \cite{MatRP}). Unlike the whole-space problem, the IBVP for viscous conservation laws may exhibit not only basic wave patterns but also a stationary solution, called a boundary layer solution, which arises due to boundary effects.

We review further results on the stability of elementary waves and boundary layer solutions for the IBVP of viscous conservation laws, with particular emphasis on the NS and NSF systems. First, using the Evans function analysis,  Costanzino-Humpherys-Nguyen-Zumbrun \cite{CHNZ09} proved the spectral stability of boundary layer solutions in both the outflow and inflow cases of the NS system. Nguyen-Zumbrun \cite{NZ09} further established the nonlinear stability of boundary layer solutions for general hyperbolic-parabolic systems, including the NS system, by deriving pointwise Green function bounds. We also refer to the work of Serre and Zumbrun \cite{SZ01}, which presents stability and instability results for boundary layer solutions in the inflow problem for the NSF system.

In contrast to the spectral approach, the energy method has also been employed to study the NS and NSF systems in the presence of boundaries. First, for the impermeable wall problem of the NS system, the asymptotic profile of the solutions can be classified into two cases: (outgoing) viscous shocks and rarefaction waves. These cases were studied in \cite{MM99} and \cite{MN00}, respectively. In the outflow setting, the stability of boundary layer solutions and the superposition of boundary layer solutions and rarefaction waves were investigated in \cite{HQ09, KNZ03, KZ08}. As for the inflow problem, we refer to \cite{MN01} for the study of the stability of boundary layer solutions and rarefaction waves, and their superpositions.

We next turn to the NSF system. For the impermeable wall problem, Huang-Li-Shi \cite{HLS10} studied the large-time behavior of the rarefaction waves. In the outflow setting, Kawashima-Nakamura-Nishibata-Zhu \cite{KNNZ10} investigated the well-posedness of boundary layer solutions. We refer to \cite{Q11, WWZ16, WWzou16} for further studies on the stability of boundary layer solutions in this context. For the inflow problem, Qin-Wang \cite{QW09, QW11} proved the stability of the superposition of a subsonic or transonic boundary layer solution, rarefaction, and viscous contact wave.

We now turn to the stability of shock waves. One classical and powerful approach for proving shock stability is the anti-derivative method. For the NS system, Matsumura-Mei \cite{MM99} established the stability of a single shock for the impermeable wall problem by introducing a phase shift that enables the use of the anti-derivative method. Later, Huang-Matsumura-Shi \cite{HMS03} proved the stability of the superpositions of a boundary layer solution and a viscous shock through a suitable change of variables, which allows one to control boundary effects associated with the anti-derivative of the fluid velocity $u$. However, to the best of our knowledge, there have been no results on the stability of viscous shocks for the IBVP of the NSF system.

In this work, we utilize the method of $a$-contraction with shifts, invented by Kang-Vasseur \cite{KV21, KV-Inven}, which provides a contraction estimate for shock waves at the $L^2$ level. Based on this method, the $L^2$-perturbation localized by the shock, or composite waves involving a shock, can be controlled (see, for example, \cite{GKV24, EoEunKang24, HKK23, HL25,  MY25, KKS25, KO25, KVW23, KVW-NSF}). With this method, Huang-Kang-Kim-Lee \cite{HKKL24} proved the stability of a single shock for the inflow and impermeable wall problems of the NS system, and Kang-Oh-Wang \cite{KOW25} established the corresponding result for the outflow problem. We also mention the work of Han-Kang-Kim-Kim-Oh \cite{HKKKO25}, which studied the stability of the superposition of a (degenerate) boundary layer solution, a rarefaction wave, and a shock wave for the inflow problem of the NS system.

In this paper, we use the method of $a$-contraction with shifts to establish the stability of a single (outgoing) shock for the boundary value problems \eqref{outflow} and \eqref{imperable} for the NSF system \eqref{eq:NSF}.

\subsection{Main results}
We now state the main result on global existence and large-time behavior for solutions to the outflow and impermeable wall problems. Without loss of generality, we may assume that the viscous shock $(\rhotil, \util, \thtil)(x-\sigma t)$ connecting $(\rho_-, u_-, \th_-)$ to $(\rho_+, u_+, \th_+)$ satisfies $\rhotil(0) = \frac{\rho_- + \rho_+}{2}$.\\

For simplicity, we rewrite the system \eqref{eq:NSF} in the following form,
which is not in conservation form but is equivalent to the original system:
\begin{equation} \label{eq:NSFS}
	\begin{cases}
		& \rho_t + (\rho u)_x = 0,\quad x\ge0,\quad t\ge0,\\
		&u_t + uu_x + \frac{p_x}{\rho}= \mu\frac{u_{xx}}{\rho}, \\
		&\frac{R}{\gamma-1}\theta_t+\frac{R}{\gamma-1}u\theta_x+\frac{p}{\rho}u_x=\frac{\kappa}{\rho}\theta_{xx}+\frac{\mu}{\rho}u_x^2.
	\end{cases}
\end{equation}
 \begin{theorem}[Outflow problem] \label{thm:outflow}
	For a given constant state $(\rho_+,u_+, \theta_+) \in\Omega^{-}_{sub}\cup \Gamma_{trans}^{-}$, there exist positive constants $\delta_0,\e_0 >0$ small enough such that the following holds. \\
	For any $(u_-,\theta_-)$ satisfying $0>u_->u_+$ and $(u_-,\theta_-) \in S_3^{P}(\rho_+,u_+,\theta_+)$ with
	\[
	|u_+-u_-| + |\theta_+ - \theta_-|<\delta_0,
	\]
	let $\r_->0$ be the (unique) constant state such that $(\rho_-,u_-,\theta_-) \in S_3(\rho_+,u_+,\theta_+)$. Denote $(\bar{\rho},\bar{u},\bar{\theta})(x-\sigma t)$ the viscous 3-shock wave of \eqref{eq:Vshock} with end states $(\r_-, u_-, \th_-)$ and $(\r_+, u_+, \th_+)$, where the shock speed $\sigma>0$ is given in \eqref{eq:RH}. Then there exists $\beta>0$ large enough (depending only on the shock strength $\delta_0$) such that the following holds.  
	Let $(\rho_0,u_0, \theta_0)$ be any initial data satisfying 
	   \begin{equation*} 
		\begin{aligned}
		&\norm{(\r_0, u_0,\th_0) - (\r_+, u_+,\th_+)}_{L^2(\b, \infty)} + \norm{(\r_0, u_0,\th_0) - (\r_-, u_-,\th_-)}_{L^2(0, \b)}\\
		&\phantom{\norm{(\r_0, u_0,\th_0) - (\r_+, u_+,\th_+)}_{L^2(\b, \infty)} + (\r_0, u_0,\th_0)}  + \norm{(\rd_x \r_0, \rd_x u_0, \rd_x \th_0)}_{L^2(\Rp)} < \e_0.
		\end{aligned}
	   \end{equation*}
	   Then the outflow problem \eqref{eq:NSF}--\eqref{eq:initial}--\eqref{outflow} admits a unique global-in-time solution $(\rho,u, \theta)(t,x)$ as follows: there exists a Lipschitz shift  $t\mapsto \bX(t)$ such that
	   \begin{align*}
		   &(\rho,u, \theta)(t,x)-(\bar{\rho}, \bar{u}, \bar{\theta})(x-\sigma t -\bX(t)-\beta)\in C(0,\infty;H^1(\R_+)),\\
		   &(u_{xx},\theta_{xx})(t,x)-(\bar{u}_{xx}, \bar{\theta}_{xx})(x-\sigma t-\bX(t)-\beta)\in L^2(0,\infty;L^2(\R_+)).
	   \end{align*}
	   Moreover, the solution asymptotically converges to the (shifted) viscous shock:
	   \begin{equation}\label{long_time_outflow}
		   \lim_{t\to\infty}\sup_{x\in\R_+}\left|(\rho,u, \theta)(t,x)-(\bar{\rho},\bar{u}, \bar{\theta})(x-\sigma t-\bX(t)-\beta)\right|=0
	   \end{equation}
	   as $t \to \infty$, and 
	   \[
	   \lim\limits_{t\to \infty}|\dot{\bX}(t)|=0.
	   \]
   \end{theorem}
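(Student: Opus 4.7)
The plan is to use the method of $a$-contraction with shifts adapted to the NSF system in Eulerian coordinates, following the framework extended from \cite{KV21, KV-Inven} to the full NSF system in \cite{KVW-NSF}. In the moving frame $\xi = x - \sigma t - \bX(t) - \beta$ I would write the perturbation as $(\phi,\psi,\omega)(t,x) = (\rho-\bar\rho,\, u-\bar u,\, \theta-\bar\theta)(t, \xi)$ and derive the perturbation system from \eqref{eq:NSFS} and \eqref{eq:Vshock}. The central object is a weighted relative entropy
\[
\HH(t) = \int_0^\infty a(\xi)\, \eta(U\,|\,\bar U)(t,x)\, dx,
\]
where $\eta$ is the standard physical relative entropy for NSF (built from density, momentum and total energy) and $a=a(\xi)$ is a smooth bounded weight monotone across the shock with a small jump of the order of $\delta_S:=|u_+-u_-|$. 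A standard fixed-point argument first gives local existence of $(\phi,\psi,\omega)\in C([0,T];H^1(\Rp))$ with strictly positive $\rho,\theta$; the heart of the proof is then a uniform-in-time \emph{a priori} $H^1$ estimate that, together with the local theory, allows continuation to $t=\infty$.

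Differentiating $\HH$ in time reproduces the standard $a$-contraction decomposition: a negative quadratic form in $(\phi,\psi,\omega)$ weighted by $|\bar u'|$ that controls the perturbation inside the shock layer; a parabolic dissipation in $(\psi_x,\omega_x)$; a linear bad term that is cancelled by choosing the shift as the solution of a Kang--Vasseur type ODE roughly of the form
\[
\dot\bX(t) = -\frac{M}{\delta_S}\int_0^\infty a'(\xi)\,\bigl(\alpha_\rho\bar\rho_\xi\phi + \alpha_u\bar u_\xi\psi + \alpha_\theta\bar\theta_\xi\omega\bigr)\,dx
\]
for constants $\alpha_\rho,\alpha_u,\alpha_\theta$ selected so that the quadratic form remains coercive after a Young's inequality on the $(\phi,\psi)$--$\omega$ cross terms (as in \cite{KVW-NSF}); and cubic/higher-order remainders that are absorbed into the dissipation once $\delta_0,\e_0$ are small.

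The only genuinely new difficulty, and what I expect to be the main obstacle, is the control of the boundary contributions at $x=0$ produced by the integrations by parts in $\frac{d}{dt}\HH$. At the outflow boundary, $u(t,0)=u_-<0$ and $\theta(t,0)=\theta_-$ are prescribed, while $\rho(t,0)$ is left free, so $(\psi,\omega)|_{x=0}$ does not vanish but equals
\[
\bigl(u_- - \bar u(-\sigma t - \bX(t) - \beta),\ \theta_- - \bar\theta(-\sigma t - \bX(t) - \beta)\bigr).
\]
Because $\sigma>0$, the self-consistent bound $|\bX(t)|\lesssim 1$ from the shift ODE implies $-\sigma t - \bX(t) - \beta \le -\beta/2$ for all $t\ge0$, and the exponential decay of $(\bar u,\bar\theta)(\xi)\to(u_-,\theta_-)$ as $\xi\to-\infty$ then makes these boundary values exponentially small in $\beta$. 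Choosing $\beta$ large depending only on $\delta_0$ kills the lower-order boundary fluxes; the convective boundary terms have the favorable sign thanks to $u_-<0$; the viscous boundary traces $\mu\psi\psi_x|_{x=0}$ and $\kappa\omega\omega_x|_{x=0}$ are bounded by trace inequalities and absorbed into the parabolic dissipation; and the weight $a$ is extended so that $a'\equiv 0$ near $\xi=-\beta$, so no term involving $a'(0)$ appears. Splitting the spatial integral into $\{x<\beta/2\}$ (where $\bar U$ is exponentially close to the left constant state and standard outflow energy estimates for NSF apply, with $\sigma>0$ producing a favorable transport toward the interior) and $\{x>\beta/2\}$ (where the full $a$-contraction machinery acts exactly as on the whole line) closes the weighted $L^2$ estimate.

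Higher-order estimates for $(\phi_x,\psi_x,\omega_x)$ are then obtained by standard means: differentiate the momentum and temperature equations in $x$ and test against $\psi_x$ and $\omega_x$, and recover $\phi_x$ from the continuity equation as in \cite{HKKL24, KOW25}; the boundary contributions are again controlled via the exponential smallness in $\beta$ combined with the outflow sign $u_-<0$. Combining these bounds yields the uniform $H^1$ a priori estimate and hence global existence. Finally, the asymptotics \eqref{long_time_outflow} and $\dot\bX(t)\to 0$ follow from the $L^1(0,\infty)$ integrability of the main dissipation functional and of $|\dot\bX|^2$, together with uniform $H^1$ bounds implying that $\frac{d}{dt}\|(\phi,\psi,\omega)(t)\|_{L^2}^2$ and $\frac{d}{dt}|\dot\bX(t)|^2$ are also integrable in time, via the standard lemma that an absolutely continuous nonnegative $L^1(0,\infty)$ function must tend to zero.
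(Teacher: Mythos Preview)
Your overall strategy matches the paper's: $a$-contraction with shifts on the weighted relative entropy, a shift ODE built from the linear part of the bad terms, boundary control through the outflow sign $u_-<0$ together with the exponential tail of the shock in $\beta$, then $H^1$ estimates and a continuation argument for global existence and the standard $L^1$-in-time dissipation argument for \eqref{long_time_outflow} and $\dot\bX\to 0$.

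There is one genuine error. The claim ``$|\bX(t)|\lesssim 1$ from the shift ODE'' is not what the ODE gives and is not used in the paper. What the a priori smallness yields is $|\dot\bX(t)|\le C\varepsilon\le \sigma/2$, hence only the linear bound $|\bX(t)|\le \tfrac{\sigma}{2}t$; this is exactly what the paper uses to get $-\sigma t-\bX(t)-\beta\le -\tfrac{\sigma}{2}t-\beta$, which in turn makes $|\bar u(t,0)-u_-|+|\bar\theta(t,0)-\theta_-|\le C\delta e^{-C\delta(\sigma t/2+\beta)}$ both integrable in $t$ and exponentially small in $\beta$. Your conclusion about the boundary survives, but the reasoning must be corrected; a uniform bound on $\bX$ is never established (and the paper only asserts $\bX(t)=o(t)$ a posteriori).

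A few departures from the paper that you should drop, since they complicate without helping. The paper does \emph{not} truncate $a'$ near the boundary nor split the half-line at $x=\beta/2$; it takes $a(\xi)=1+(u_--\bar u(\xi))/\sqrt{\delta}$ globally and treats all boundary fluxes directly. In particular the convective boundary flux $a u\,\bar\theta\,\eta(U|\bar U)\big|_{x=0}$ is a genuinely \emph{good} term (sign from $u_-<0$) that produces control of $|(\rho-\bar\rho)(t,0)|^2$; this control is then reused when estimating $\phi_x$ at the boundary in the $H^1$ step, which is precisely where the lack of a boundary condition on $\rho$ bites. The viscous and thermal boundary traces $\mu\psi\psi_x|_{x=0}$, $\kappa\vartheta\vartheta_x|_{x=0}$ are not absorbed into the first-order parabolic dissipation via a trace inequality as you suggest, but rather bounded by interpolation against the \emph{second-order} dissipations $\|(u-\bar u)_{xx}\|_{L^2}^2$ and $\|(\theta-\bar\theta)_{xx}\|_{L^2}^2$ with a small prefactor $\varepsilon^2$; this is why the zeroth-order estimate cannot be closed on its own and must be combined with the $H^1$ estimates. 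Finally, the relative entropy in the paper is weighted by $a\bar\theta$, not just $a$; the extra factor $\bar\theta$ is what makes the entropy identity close cleanly for NSF in Eulerian coordinates.
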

Next, we state the analogous result for the impermeable wall problem.
   \begin{theorem}[Impermeable wall problem] \label{thm:impermeable}
	For given $(\rho_+, \theta_+)$, there exist positive constants $\delta_0,\e_0 >0$ small enough such that the following holds. \\
	For any constant states $u_+<0$ satisfying $$- u_+ = |u_+| < \delta_0,$$ let $\r_->0$ and $\theta_->0$ be the (unique) constant states such that $(\rho_-,u_-,\theta_-):= (\rho_-,0,\theta_-)\in S_3(\rho_+,u_+,\theta_+)$. Denote  $(\bar{\rho} (x-\sigma t),\bar{u}(x-\sigma t), \bar{\theta}(x-\sigma t))$ the viscous 3-shock wave of \eqref{eq:Vshock} with end states $(\r_-, u_-, \th_-)$ and $(\r_+, u_+, \th_+)$, where the shock speed $\sigma>0$ is given in \eqref{eq:RH}. Then there exists $\beta>0$ large enough (depending only on the shock strength $\delta_0$) such that the following holds. 
	Let $(\rho_0,u_0, \theta_0)$ be any initial data satisfying 
	   \begin{equation*} 
		\begin{aligned}
		&\norm{(\r_0, u_0,\th_0) - (\r_+, u_+,\th_+)}_{L^2(\b, \infty)} + \norm{(\r_0, u_0,\th_0) - (\r_-, u_-,\th_-)}_{L^2(0, \b)}\\
		&\phantom{\norm{(\r_0, u_0,\th_0) - (\r_+, u_+,\th_+)}_{L^2(\b, \infty)} + (\r_0, u_0,\th_0)}  + \norm{(\rd_x \r_0, \rd_x u_0, \rd_x \th_0)}_{L^2(\Rp)} < \e_0.
		\end{aligned}
	   \end{equation*}
	   Then the impermeable wall problem \eqref{eq:NSF}--\eqref{eq:initial}--\eqref{imperable} admits an unique global-in-time solution $(\rho,u, \theta)(t,x)$ as follows: there exists a Lipschitz shift  $t\mapsto \bX(t)$ such that
	   \begin{align*}
		   &(\rho,u, \theta)(t,x)-(\bar{\rho}, \bar{u}, \bar{\theta})(x-\sigma t -\bX(t)-\beta)\in C(0,\infty;H^1(\R_+)),\\
		   &(u_{xx},\theta_{xx})(t,x)-(\bar{u}_{xx}, \bar{\theta}_{xx})(x-\sigma t-\bX(t)-\beta)\in L^2(0,\infty;L^2(\R_+)).
	   \end{align*}
	   Moreover, the solution asymptotically converges to the (shifted) viscous shock:
	   \begin{equation}\label{long_time_impermeable}
		   \lim_{t\to\infty}\sup_{x\in\R_+}\left|(\rho,u, \theta)(t,x)-(\bar{\rho},\bar{u}, \bar{\theta})(x-\sigma t-\bX(t)-\beta)\right|=0
	   \end{equation}
	   as $t \to \infty$, and 
	   \[
	   \lim\limits_{t\to \infty}|\dot{\bX}(t)|=0.
	   \]
   \end{theorem}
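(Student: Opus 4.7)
The strategy is to run the $a$-contraction with shifts program in Eulerian coordinates, parallel to the outflow case but exploiting the extra simplification $u_- = 0$. First, standard parabolic–hyperbolic local well-posedness in $H^1(\Rp)$ about the shifted profile $(\rhotil,\util,\thtil)(x-\sigma t-\bX(t)-\beta)$ gives a unique solution on a short interval, with $\bX(0)=0$. The global-in-time statement and \eqref{long_time_impermeable} will then follow from a uniform-in-time a priori bound under the smallness assumption on $(\delta_0,\e_0)$, combined with a continuity argument and the integrability of the dissipation. Since $\sigma>0$ and $\beta\gg 1$, the traveling variable at the boundary, $\xi(t,0)=-\sigma t-\bX(t)-\beta$, is driven to $-\infty$, so the background profile evaluated at $x=0$ approaches $(\rho_-,0,\theta_-)$ exponentially fast in $t$.

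The central object is the weighted relative entropy $\int_0^\infty a(\xi)\,\eta\bigl(U \,\big|\, \tUX\bigr)\,dx$, where $U=(\rho,\rho u, E)$, $\eta$ is the physical entropy of \eqref{eq:NSF}, and $a(\xi)$ is a slowly varying, monotone weight adapted to a $3$-shock (of the form used in \cite{KVW-NSF}). Differentiating in time and using \eqref{eq:NSFS} together with the profile ODE \eqref{eq:Vshock} produces a decomposition
\begin{equation*}
\frac{d}{dt}\int_0^\infty a(\xi)\,\eta(U|\tUX)\,dx \;=\; \dot{\bX}(t)\,Y(U) \;+\; \mathcal B_0(t) \;-\; \mathcal G(U) \;-\; \mathcal D_{\mathrm{par}}(U) \;+\; \mathcal E,
\end{equation*}
where $Y$ collects the linear-in-perturbation obstruction, $\mathcal B_0$ gathers boundary traces at $x=0$, $\mathcal G\ge 0$ is the hyperbolic good term produced by $a'\neq 0$ and the shock profile, $\mathcal D_{\mathrm{par}}\ge 0$ comes from the viscous and heat-conduction terms, and $\mathcal E$ is a higher-order error. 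I would then define the shift via the ODE $\dot{\bX}(t) = -(M/\delta_0)\,Y(U)$ with $M$ large, so that the $\dot\bX\cdot Y$ obstruction is absorbed. The sharp Poincaré-type inequalities developed for the $a$-contraction method (as applied to NSF in \cite{KVW-NSF}) bound the quadratic part of $\mathcal E$ in terms of a fraction of $\mathcal G+\mathcal D_{\mathrm{par}}$, provided $\delta_0$ and the a priori $H^1$-smallness $\sqrt\e$ are small. Higher-order energy estimates for $(\rho_x,u_x,\theta_x)$ and $(u_{xx},\theta_{xx})$, obtained by differentiating \eqref{eq:NSFS} and pairing with suitable test functions, close the full $H^1$ estimate.

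The step I expect to be the main obstacle is the control of the boundary contributions $\mathcal B_0(t)$. In Lagrangian mass coordinates the condition $u(t,0)=0$ makes the boundary a material line and boundary terms largely disappear; in Eulerian variables one is instead forced to confront fluxes of the form $(\rhotil-\rho_-)\cdot(\text{something})$, $\mu\util'(\xi(t,0))$, $\kappa\thtil'(\xi(t,0))$, and pressure differences $p(\rho,\theta)-p(\rho_-,\theta_-)$ at $x=0$. The plan is to exploit three facts: (i) $u(t,0)=u_-=0$ kills the convective boundary fluxes outright; (ii) the profile derivatives $\util',\thtil',\rhotil'$ at $\xi(t,0)$ decay exponentially by the sharp tail estimates for Lax $3$-shocks of \eqref{eq:Vshock}; and (iii) the Dirichlet boundary conditions for $u$ and $\theta$ give access to trace estimates for $u_x(t,0)$ and $\theta_x(t,0)$ through the parabolic equations themselves. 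Together these let me bound $|\mathcal B_0(t)|$ by $Ce^{-c t}$ plus terms absorbable into $\mathcal D_{\mathrm{par}}$ and $\dot\bX^2/\delta_0$, making $\int_0^\infty|\mathcal B_0|\,dt$ finite.

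Once the uniform estimate is closed, the continuity argument upgrades local to global existence with $(\rho,u,\theta)(t,\cdot)-(\rhotil,\util,\thtil)(\cdot-\sigma t-\bX-\beta)\in C([0,\infty);H^1(\Rp))$ and $\int_0^\infty\bigl(\|(\rho_x,u_x,\theta_x)\|_{L^2}^2+\|(u_{xx},\theta_{xx})\|_{L^2}^2\bigr)\,dt<\infty$. Showing that the $H^1$-dissipation norm lies in $W^{1,1}(0,\infty)$ (by differentiating the energy identity once more in $t$) gives $\|(\rho_x,u_x,\theta_x)(t)\|_{L^2}\to 0$, whence \eqref{long_time_impermeable} follows by Sobolev embedding. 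Finally, from $\dot\bX\in L^2(0,\infty)$ and the boundedness of $\ddot\bX$ obtained by differentiating the shift ODE, one concludes $|\dot\bX(t)|\to 0$.
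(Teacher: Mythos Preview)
Your proposal is correct and follows essentially the same route as the paper: local existence plus an a priori $H^1$ estimate via the $a$-contraction-with-shifts relative entropy, the shift defined by $\dot{\bX}=-(M/\delta)Y(U)$, Poincar\'e-type control of the bad quadratic terms, higher-order estimates for $(\rho_x,u_x,\theta_x,u_{xx},\theta_{xx})$, and a continuation argument, with the impermeable boundary handled exactly as you describe (the convective flux $au\bar\theta\eta(U|\bar U)|_{x=0}$ vanishes since $u_-=0$, and the remaining boundary traces are controlled by the exponential tail of the profile at $\xi(t,0)=-\sigma t-\bX(t)-\beta$ together with interpolation). Two small points of alignment: the paper weights the relative entropy by $\bar\theta$, i.e.\ works with $\int a\bar\theta\,\eta(U|\bar U)\,dx$, and the time-integrated boundary contribution is bounded by $Ce^{-C\delta\beta}$ (smallness coming from the choice of $\beta$, not merely finiteness), which is what actually closes the estimate.
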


\begin{remark} 
	\begin{enumerate}
		\item In Theorems \ref{thm:outflow} and \ref{thm:impermeable}, we prove the time-asymptotic stability of the viscous shock under small initial perturbations in the $H^1$ norm. Here, the two parameters $\delta_0$ and $\e_0$ are independent, representing the smallness of shock strength and initial perturbation respectively.
		\item We fix the position of the shock, as $\rhotil(\beta) = (\r_- + \r_+)/2$. Here, the large constant $\beta$ depends only on the shock strength $\delta_0$ (but is independent of $\e_0$). This enables us to control the boundary effect by ensuring that the discrepancy between the solution to \eqref{eq:NSF} and viscous shock at the boundary remains small.
		\item Our result implies that the solution to \eqref{eq:NSF} time-asymptotically converges to the viscous shock $(\bar{\rho},\bar{u},\bar{\theta})(x-\sigma t-\beta)$ as $t \to \infty$, up to a dynamical shift $\mathbf{X}(t)$, where the asymptotic profile is initially located far away from the boundary. Indeed, in the above results, the decay estimate $\lim\limits_{t\to\infty}|\dot{\bX}(t)|=0$ implies that the shift function $\bX(t) = o(t)$ as $t\to\infty$. Thus, the shifted wave $(\bar{\rho},\bar{u},\bar{\theta})(x-\sigma t-\bX(t)-\beta)$ tends to the original wave $(\bar{\rho},\bar{u},\bar{\theta})(x-\sigma t)$ as $t \to \infty$.
	\end{enumerate}
\end{remark}

\subsection{Main ideas for the proof}
First, we consider our problem in Eulerian coordinates, as the outflow problem \eqref{outflow} in Lagrangian mass coordinates leads to a free boundary value problem. To handle two boundary conditions in a unified way, the Eulerian framework is more suitable than the Lagrangian one in our setting, even though it is technically more complicated. 

Second, as mentioned above, when proving the results in Theorems \ref{thm:outflow} and \ref{thm:impermeable}, we use the method of $a$-contraction with shifts, which provides a way to control the $L^2$-perturbation of viscous shock waves. In this paper, we aim to employ the method of $a$-contraction with shifts to the IBVP for the NSF system.

However, applying this method in our setting involves several major difficulties. First, when introducing the change of variable $x \mapsto y=y(x)$ in \eqref{ydef} and applying the Poincar\'e-type inequality, it is necessary to estimate the Jacobian $\frac{dy}{dx}$, which arises from the localization of diffusion terms by shock. In contrast to the approach in \cite{KVW-NSF}, where the Lagrangian mass coordinate was used, our analysis is based on the Eulerian coordinate, which makes the control of the Jacobian $\frac{dy}{dx}$ technically more delicate. To address this difficulty and simplify the computations, we estimate the main term $(p-p_\pm)/(u-u_\pm)$ appearing in the expression for $\frac{dy}{dx}$ (see \eqref{firstexj}) in a different manner. Instead of relying on the Taylor expansion argument used in \cite[Appendix B]{KVW-NSF}, we directly make use of the structural properties of viscous shock waves \eqref{shock_prop}$_3$ and \eqref{estderi} in Lemma~\ref{lem:viscous_shock}, together with the viscous shock equations \eqref{eq:Vshock}. These yield explicit relations between $p-p_\pm$ and $u-u_\pm$, and consequently we derive Lemma~\ref{lem:Jac}.

Next, for the boundary effects that arise from integration by parts, we need to control the discrepancy between the boundary values given in \eqref{outflow} (or \eqref{imperable}) and the values of viscous shocks at $x=0$. For this purpose, we choose the parameter $\beta>0$ large enough so that the perturbation at the boundary remains small. This smallness is also crucial for applying the Poincar\'e-type inequality in Lemma \ref{Poincare} (see Section \ref{sec:lot} for details).

Moreover, an additional difficulty arises from controlling the boundary effect associated with the density $\rho$, since there is no specified boundary condition for the density $\rho$. However, the outflow boundary condition \eqref{outflow} yields a good boundary term related to the density, which enables us to control the boundary effect (see Lemmas \ref{lem:main} and \ref{lem:rhox}). For the impermeable wall problem, the boundary effects involving $\rho$ and $\rho_x$ vanish due to the condition $u_-=0$.

\subsection{Organization of paper}
The paper is organized as follows. In Section 2, we present the properties of weak viscous shocks and the Poincaré-type inequality on any compact interval. Section 3 provides an \textit{a priori} estimate. In Section 4, we establish the $L^2$-estimate for the two boundary value problems, and in Section 5, we provide higher-order estimates.

	\section{Preliminaries}
\setcounter{equation}{0}

We first present several useful properties of the weak viscous shock waves. Subsequently, we provide the Poincar\'e-type inequality on any compact interval.
\subsection{Viscous shock waves}
We turn to the viscous 3-shock wave connecting $(\rho_-, u_-, \theta_-)$ and $(\rho_+,u_+,\theta_+)$ such that $(\rho_-, u_-, \theta_-) \in S_3(\rho_+,u_+,\theta_+)$. Recall from \eqref{eq:RH} that the shock speed $\sigma$ is explicitly given by
\begin{align*} 
	\sigma= u_++\sqrt{\frac{\rho_-}{\rho_+}\left(\frac{p_- - p_+}{\rho_--\rho_+}\right)},
\end{align*}
where $p_\pm := R\r_\pm \th_\pm$. 
The following lemma summarizes the main properties of the viscous shock waves, which will be used in the subsequent analysis.
\begin{lemma}\label{lem:viscous_shock}\cite{KVW-NSF}
	For a given right-end state $(\rho_+,u_+, \theta_+)$, there exists a constant $C>0$ such that the following holds. For any left-end state $(\rho_-,u_-,\theta_-)$ connected with $(\rho_+,u_+, \theta_+)$ via 3-shock curve, there exists a unique solution $(\bar{\rho}, \bar{u}, \bar{\theta})(x-\sigma t) = (\bar{\rho}, \bar{u}, \bar{\theta})(\xi)$ to \eqref{eq:Vshock} with $\rhotil(0) = (\rho_- + \rho_+)/2$. Let $\delta$ be the strength of the shock defined as $\delta:=|u_+-u_-|\sim|\rho_+-\rho_-|\sim |\theta_+ - \theta_-|$. Then we have
	\[\bar{\rho}_{\xi} <0,\quad \bar{u}_{\xi }<0,\quad \bar{\theta}_{\xi} <0, \quad \forall \xi \in \mathbb{R},\]
	and
	\begin{align}
		\begin{aligned}\label{shock_prop}
			&|\left(\bar{\rho}(\xi)-\rho_{\pm}, \bar{u}(\xi) - u_{\pm}, \bar{\theta}(\xi) - \theta_{\pm}\right)|\le C\delta e^{-C\delta|\xi|},\quad \pm \xi>0,\\
			&|(\bar{\rho}_\xi,\bar{u}_\xi, \bar{\theta}_\xi)|\le C\delta^2 e^{-C\delta|\xi|},\quad \xi \in\R,\\
			&|(\bar{\rho}_{\xi\xi}, \bar{u}_{\xi\xi}, \bar{\theta}_{\xi\xi})|\le C\delta|(\bar{\rho}_\xi,\bar{u}_\xi, \bar{\theta}_\xi)|,\quad \xi\in\R.
		\end{aligned}
	\end{align}
	In particular, $|\bar{\rho}_\xi| \sim |\bar{u}_\xi| \sim |\bar{\theta}_\xi|$ for all $\xi \in \R$, more explicitly,
		\begin{equation} \label{estderi}
			\left|\bar{\rho}_\xi-\frac{\rho_-}{\sqrt{\gamma R \theta_-}}\bar{u}_\xi \right|\le C\delta |\bar{u}_\xi| \quad \text{and} \quad 
			\left|\bar{\theta}_\xi -\frac{(\gamma-1)\theta_-}{\sqrt{\gamma R \theta_-}}\bar{u}_\xi \right|\le C\delta |\bar{u}_\xi|, \quad \forall \xi \in \R.
		\end{equation}
		Moreover, define $\sigma_- := u_- +\sqrt{\gamma R\theta_-}$. Then we have 
		\begin{align}\label{est:sigma}
			|\sigma- \sigma_-| \leq C\delta.
		\end{align}
\end{lemma}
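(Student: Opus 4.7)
Since this is a lemma of classical type about the structure of a weak viscous shock for the compressible NSF system (essentially quoted from \cite{KVW-NSF}), my plan is to reduce the profile ODE \eqref{eq:Vshock} to a planar system and then use a perturbation analysis around the small shock-strength limit $\delta \to 0$. The main steps would be carried out in the following order.

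First, I would integrate each of the three conservation equations in \eqref{eq:Vshock} once from $-\infty$ to $\xi$, using the left end state $(\rho_-,u_-,\theta_-)$, and apply the Rankine--Hugoniot relations \eqref{eq:RH} so that the corresponding integration constants coincide with those obtained using the right end state. The first integration of $\eqref{eq:Vshock}_1$ gives the mass flux identity $\rhotil(\util-\sigma) = \rho_-(u_--\sigma) =: m$, a quantity of order $O(1)$ but bounded away from zero. This lets me express $\rhotil$ algebraically in terms of $\util$, $\rhotil = m/(\util-\sigma)$, reducing \eqref{eq:Vshock} to a planar autonomous system of first-order ODEs for $(\util,\thtil)$ whose two equilibria are $(u_\pm,\theta_\pm)$.

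Next I would linearize the reduced system at both equilibria and check that, under the Lax entropy condition $\lambda_3(\rho_+,u_+,\theta_+)<\sigma<\lambda_3(\rho_-,u_-,\theta_-)$ together with $\rho_->\rho_+$, each equilibrium is a hyperbolic saddle, with stable/unstable eigenvalues of size $O(\delta)$. Because the system is $O(\delta)$-close to the degenerate case $\delta=0$ (where all three states collapse to a single point and the tangent direction is a right eigenvector of the hyperbolic Euler system), a standard saddle-to-saddle connection argument, or an application of center/invariant-manifold theory with a small parameter, gives existence and uniqueness (up to translation) of a heteroclinic orbit from $(u_-,\theta_-)$ to $(u_+,\theta_+)$ for $\delta$ sufficiently small. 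Fixing the translation by $\rhotil(0)=(\rho_-+\rho_+)/2$ pins down the profile uniquely. The strict monotonicity $\rhotil_\xi,\util_\xi,\thtil_\xi<0$ is then verified by checking that the orbit enters and stays in a small invariant cone around the leading-order direction, where the signs of all three derivatives are fixed.

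The exponential estimates in \eqref{shock_prop} follow from the saddle eigenvalue scaling: stable/unstable eigenvalues at $(u_\pm,\theta_\pm)$ are of size $C\delta$, so the convergence $(\rhotil,\util,\thtil)\to(\rho_\pm,u_\pm,\theta_\pm)$ is exponential with rate $C\delta$, giving the first line; differentiating the reduced ODE once upgrades this to $|(\rhotil_\xi,\util_\xi,\thtil_\xi)|\le C\delta^2 e^{-C\delta|\xi|}$ (the extra factor of $\delta$ comes from the fact that the right-hand side of the profile ODE is $O(\delta)$ on the orbit), and a second differentiation yields the $C^2$ bound. The ratio estimates \eqref{estderi} are then a direct consequence of the fact that, to leading order in $\delta$, the tangent vector to the heteroclinic orbit is parallel to the third right eigenvector of the Jacobian of the Euler flux at the reference state $(\rho_-,\theta_-)$, namely a multiple of $\bigl(\rho_-/\sqrt{\gamma R\theta_-},\,1,\,(\gamma-1)\theta_-/\sqrt{\gamma R\theta_-}\bigr)$; the $O(\delta)|\util_\xi|$ error absorbs the higher-order terms. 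Finally, \eqref{est:sigma} is a direct Taylor expansion of the explicit formula $\sigma=u_++\sqrt{(\rho_-/\rho_+)(p_--p_+)/(\rho_--\rho_+)}$: the quotient $(p_--p_+)/(\rho_--\rho_+)$ converges to $p_\rho(\rho_-,\theta_-)=R\theta_-$ with an $O(\delta)$ error, and $u_+-u_-$ is itself $O(\delta)$, so combining these gives $\sigma=u_-+\sqrt{\gamma R\theta_-}+O(\delta)=\sigma_-+O(\delta)$. The main technical obstacle is the quantitative perturbation analysis ensuring all constants in the exponential bounds are uniform in $\delta$; this is standard but delicate, and is precisely the content handled in \cite{KVW-NSF}.
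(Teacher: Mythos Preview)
The paper does not actually prove this lemma: it is stated with a citation to \cite{KVW-NSF} and no proof is given in the text. Your sketch is correct and outlines precisely the standard argument one finds in that reference and related literature---integrating the conservation laws once to reduce to a planar autonomous system via the mass-flux relation $\rhotil(\util-\sigma)=\rho_-(u_--\sigma)$, establishing the saddle-to-saddle heteroclinic for small $\delta$, reading off the exponential rates from the $O(\delta)$ eigenvalues, and identifying the leading-order tangent direction with the third Euler eigenvector to obtain \eqref{estderi}. Nothing is missing or wrong; this is exactly the content the paper is importing from \cite{KVW-NSF}.
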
 
\begin{remark}\label{rmk:VS}
By Lemma \ref{lem:viscous_shock} and the Lax entropy condition $\lambda_3(\rho_-,u_-,\th_-) > \sigma > \lambda_3(\rho_+,u_+,\th_+)$, it follows that 
\[
\left|\sigma - \util - \sqrt{\gamma R \thtil}\right| \leq C\delta.
\]
By choosing $\delta>0$ sufficiently small, we obtain $\sigma - \util > \frac{\sqrt{\gamma R \theta_+}}{2} > 0$. 
\end{remark}
\subsection{Poincar\'e-type inequality}
One of the main tools for proving shock stability is the Poincar\'e-type inequality. Note that the constant $\frac{1}{2}$ in the inequality is optimal and independent of the domain size. The proof of the lemma below can be found in \cite{HKKL24} (see also \cite{KV21}).

\begin{lemma}\label{Poincare}\cite{HKKL24}
	For any $c<d$ and function $f : \left[c, d\right]  \longrightarrow  \mathbb{R}$ satisfying $\int_{c}^d (y-c)(d-y)|f'(y)|^2dy<\infty$,   
	\[\int_{c}^d \left|f(y)-\frac{1}{d-c}\int_{c}^d f(y)dy\right|^2dy\le \frac{1}{2}\int_{c}^d(y-c)(d-y)|f'(y)|^2dy.\]
\end{lemma}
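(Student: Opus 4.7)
The plan is to reduce to the unit interval via rescaling, remove the mean, and then identify the optimal constant through the spectrum of the underlying degenerate Sturm-Liouville operator (the shifted Legendre equation).

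First I would perform the affine change of variables $y = c + (d-c)z$, $F(z) := f(c+(d-c)z)$. A direct computation shows both sides scale by the same factor $(d-c)$: indeed, $\int_c^d (f - \bar f)^2 \, dy = (d-c)\int_0^1 (F - \bar F)^2 \, dz$ and $\int_c^d (y-c)(d-y)(f')^2 \, dy = (d-c)\int_0^1 z(1-z)(F')^2 \, dz$. Hence the inequality is equivalent to the scale-invariant statement on $[0,1]$, which already explains why the constant $1/2$ is independent of $d-c$. After subtracting the mean it suffices to prove
\begin{equation*}
\int_0^1 F^2 \, dz \;\le\; \tfrac{1}{2} \int_0^1 z(1-z) (F')^2 \, dz
\end{equation*}
for every $F$ with $\int_0^1 F \, dz = 0$ and $\int_0^1 z(1-z)(F')^2 \, dz < \infty$.

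Next I would bring in the spectral theory of the degenerate Sturm-Liouville operator $L F := -\bigl(z(1-z) F'\bigr)'$. Because the weight $z(1-z)$ vanishes at both endpoints, no artificial boundary conditions are needed: the natural form domain $\{F : \int_0^1 z(1-z)|F'|^2 < \infty\}$ makes $L$ self-adjoint with purely discrete spectrum. Its eigenvalue problem is the shifted Legendre equation, with eigenvalues $\mu_n = n(n+1)$ for $n=0,1,2,\ldots$, and eigenfunctions the shifted Legendre polynomials $\tilde P_n(z) := P_n(2z-1)$, which form an orthogonal basis of $L^2([0,1])$. The mean-zero constraint precisely removes the $n=0$ mode (the constant $\tilde P_0$).

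Finally I would expand $F = \sum_{n\ge 1} a_n \tilde P_n$; Parseval gives $\int_0^1 F^2 \, dz = \sum_{n\ge 1} a_n^2 \|\tilde P_n\|_{L^2}^2$, while integration by parts combined with the eigenvalue identity $L \tilde P_n = n(n+1)\tilde P_n$ gives $\int_0^1 z(1-z)(F')^2 \, dz = \sum_{n\ge 1} n(n+1) a_n^2 \|\tilde P_n\|_{L^2}^2$. Since $n(n+1) \ge 2$ for every $n \ge 1$, the inequality drops out termwise, with equality iff $F$ is a multiple of $\tilde P_1(z) = 2z-1$, confirming that $1/2$ is sharp. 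The main obstacle I anticipate is purely analytic rather than algebraic: one must justify the Legendre expansion for functions that only satisfy the weak regularity $\int_c^d (y-c)(d-y)|f'|^2 \, dy < \infty$, with no a priori boundary regularity. I would handle this by first proving the inequality for smooth $f$ (where the expansion is classical) and then extending by density in the weighted energy space, using lower semicontinuity of both sides under $L^2_{\mathrm{loc}}$ convergence.
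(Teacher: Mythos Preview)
Your argument is correct. The paper does not actually supply its own proof of this lemma; it simply cites \cite{HKKL24} and \cite{KV21}, and the proof in those references proceeds exactly along the lines you describe: rescale to $[0,1]$, expand in shifted Legendre polynomials, and use that the first nonzero eigenvalue of $-(z(1-z)F')'$ is $\mu_1 = 2$. So your approach coincides with the cited one, including the observation that equality holds for $F$ proportional to $2z-1$.
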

\section{A priori estimates and proof of the main theorem}
\setcounter{equation}{0}
In this section, we derive \textit{a priori estimates} for the $H^1$-perturbation between the solution and the viscous shock wave. Based on these estimates, we establish the large-time behavior of solutions to both IBVPs toward the viscous shock waves.

\subsection{Local existence of solutions}
We first state the local existence of a solution to the outflow and impermeable wall problem.

\begin{proposition}\label{prop:local}
	For any constant $\beta>0$, let $\underline{\rho}, \underline{u}$, and $\underline{\theta}$ be smooth monotone functions such that
	\[
    (\underline{\rho}(x), \underline{u}(x),\underline{\theta}(x))=(\rho_+,u_+,\theta_+), \, \, \text{for } x \ge \beta,\,\,  \underline{\rho}(0)>0, \, \, \underline{u}(0) = u_-, \, \, \text{and} \, \, \underline{\theta}(0)=\theta_- .
    \]
	For any constants $M_0$, $M_1$, $\underline{\kappa}_0$, $\overline{\kappa}_0$, $\underline{\kappa}_1$, and $\overline{\kappa}_1$ with $0<M_0<M_1$ and $0<\underline{\kappa}_1<\underline{\kappa}_0<\overline{\kappa}_0<\overline{\kappa}_1$, there exists a constant $T_0>0$ such that if
	\begin{align*}
		&\|(\rho_0-\underline{\rho},u_0-\underline{u}, \theta_0-\underline{\theta})\|_{H^1(\R_+)}\le M_0,\\
		&0<\underline{\kappa}_0\le \rho_0(x), \theta_0(x)\le \overline{\kappa}_0,\quad x\in\R_+,
	\end{align*}
	the outflow problem \eqref{eq:NSF}-\eqref{eq:initial}-\eqref{outflow} (or the impermeable wall problem \eqref{eq:NSF}-\eqref{eq:initial}-\eqref{imperable}) admits a unique solution $(\rho,u, \theta)$ on $[0,T_0]$ such that
	\begin{align*}
		\rho -\underline{\rho}\in C([0,T_0];H^1(\R_+)),\quad(u-\underline{u}, \theta-\underline{\theta}) \in C([0,T_0];H^1(\R_+))\cap L^2(0,T_0;H^2(\R_+)),
	\end{align*}
	and
	\[\|(\rho-\underline{\rho},u-\underline{u}, \theta-\underline{\theta})\|_{L^\infty(0,T_0;H^1(\R_+))}\le M_1.\]
	Moreover, for the outflow problem (or the impermeable wall problem), we have
	\[
	\underline{\kappa}_1\le \rho(t,x) \le \overline{\kappa}_1, \quad \underline{\kappa}_1\le \theta(t,x)\le \overline{\kappa}_1, \quad \forall t>0, \, \, \forall x\in\R_+,
	\]
	and
	\[
	u(t,0)=u_- <0 \, \, (\text{or } u_-=0 \, \, \text{for the impermeable case}),\quad \theta(t,0) = \theta_->0, \quad \forall t>0.
	\]
\end{proposition}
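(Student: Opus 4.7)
I would pass to perturbation variables $\phi := \r - \rubar$, $\psi := u - \uubar$, $\chi := \th - \thubar$, where the smooth monotone backgrounds are further adjusted (in a way consistent with the hypotheses of the proposition) so that $\uubar(0) = u_-$ and $\thubar(0) = \th_-$. Then $\psi(t,0) = \chi(t,0) = 0$ and $(\phi, \psi, \chi)(0,\cdot) \in H^1(\R_+)$ with norm controlled by $M_0$. Rewriting \eqref{eq:NSFS} in these variables yields one transport equation for $\phi$ coupled to two parabolic equations for $(\psi, \chi)$, with inhomogeneous source terms whose coefficients are smooth and supported in $[0, \beta]$.

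\textbf{Picard iteration and uniform bounds.} I would construct approximate solutions by iteration in the closed ball
\[
\mathcal{X}_{T_0} := \bigl\{ (\phi, \psi, \chi) \in C([0,T_0]; H^1(\R_+)) : \|(\phi, \psi, \chi)\|_{L^\infty_t H^1_x} \leq M_1, \ \underline{\kappa}_1 \leq \r, \th \leq \overline{\kappa}_1 \bigr\},
\]
equipped additionally with $L^2(0,T_0; H^2(\R_+))$ regularity for $\psi, \chi$. Given an iterate $(\phi^n, \psi^n, \chi^n)$, the next iterate is defined by solving three decoupled linear problems in sequence: (i) a transport equation for $\phi^{n+1}$ with drift $u^n$, solved by the method of characteristics---since $u_- \leq 0$ and $u^n$ is continuous, $\{x = 0\}$ is a non-incoming (outflow in Case 1, characteristic in Case 2) boundary for this transport equation, so no boundary datum for $\r$ is required and $\phi^{n+1}$ is uniquely determined by $\phi_0$ alone; (ii) a linear parabolic equation for $\psi^{n+1}$ with coefficients depending on $\r^{n+1}, \th^n$ and homogeneous Dirichlet boundary value, solved by standard half-line parabolic theory; (iii) analogously for $\chi^{n+1}$. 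Standard energy estimates---obtained by differentiating the transport equation in $x$ and using the $L^\infty$ bound on $u^n_x$ supplied by parabolic regularity of $\psi^n$, together with $H^1$ parabolic energy estimates for $\psi^{n+1}, \chi^{n+1}$---yield uniform $L^\infty_t H^1_x$ and $L^2_t H^2_x$ bounds on the iterates, with constants depending only on $M_0, M_1, \underline{\kappa}_i, \overline{\kappa}_i, \beta$. Choosing $T_0$ small enough keeps the iterates inside $\mathcal{X}_{T_0}$, and the positivity $\underline{\kappa}_1 \leq \r^{n+1}, \th^{n+1} \leq \overline{\kappa}_1$ is propagated by time continuity, via the explicit characteristic formula for $\r$ and the parabolic maximum principle for $\th$, at the cost of a further shrinking of $T_0$.

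\textbf{Contraction, limit, and the main obstacle.} Contraction in the weaker norm $L^\infty(0, T_0; L^2(\R_+))$ follows from the same linear estimates applied to successive differences $(\phi^{n+1} - \phi^n, \psi^{n+1} - \psi^n, \chi^{n+1} - \chi^n)$, possibly after yet another shrinking of $T_0$. The Banach fixed-point theorem then yields a unique local solution in $\mathcal{X}_{T_0}$, and the continuity in time into $H^1_x$, the $L^2_t H^2_x$ regularity of $(\psi, \chi)$, and the pointwise positivity bounds for $\r, \th$ all pass to the limit from the iterates. The principal technical obstacle is the absence of any prescribed boundary condition for $\r$ at $x=0$: one must justify that the pure initial-value transport problem for $\r$ is well-posed on $\R_+$, and that the boundary trace $\r(t, 0)$---which appears as a coefficient in the equations for $\psi$ and $\chi$---is controlled in $H^1$-type norms without any prescribed boundary data. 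Both issues are resolved by the structural sign condition $u_- \leq 0$, which makes $\{x = 0\}$ an outgoing (or characteristic) boundary for $\r_t + (\r u)_x = 0$, so $\r(t, 0)$ is determined purely by the initial density along the characteristic through $(t, 0)$ and requires no extra hypothesis.
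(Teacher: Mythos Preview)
The paper does not actually prove this proposition: it is stated in Section~3.1 without proof and then used as a black box in the continuation argument of Section~3.5, where the authors write that ``these proofs are similar to those in the previous articles (e.g.\ \cite{HKKL24, KOW25})'' and omit the details. So there is no ``paper's own proof'' to compare against.

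Your outline is a reasonable sketch of the standard local-existence argument for this class of problems: pass to perturbation variables around a smooth background compatible with the boundary data, linearize, iterate, and close in a weaker norm. Your identification of the main structural point---that $u_- \le 0$ makes $\{x=0\}$ an outflow or characteristic boundary for the continuity equation, so no boundary condition on $\rho$ is needed---is exactly the reason the outflow and impermeable cases are well-posed without prescribing $\rho(t,0)$. One small caveat: invoking the parabolic maximum principle for the temperature bound is not quite direct here, since the $\theta$-equation has the source term $\mu u_x^2/\rho$ and is coupled to the other unknowns; in practice the bounds $\underline{\kappa}_1 \le \theta \le \overline{\kappa}_1$ are obtained simply from the $H^1 \hookrightarrow L^\infty$ embedding together with the $H^1$ energy control and continuity in time, by taking $T_0$ small. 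Otherwise your plan matches what one finds in the references the paper defers to.
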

\begin{proof}
Since the local-in-time existence can be proved by a standard iteration argument, we only present the corresponding iteration scheme.

We define a sequence $\{(\rho_n(t,x),u_n(t,x),\theta_n(t,x))\}$ as follows. First, we set 
\[
(\rho_0(x,t), u_0(x,t), \th_0(x,t)) := (\rho_0(x), u_0(x), \th_0(x)), \quad \text{and} \quad \rho_1(t,x) := \rho_0(x).
\]
Then define $u_1(t,x)$ and $\th_1(t,x)$ by
\begin{align*}
\left\{
\begin{aligned}
     &(u_1)_t - \frac{\mu}{\rho_1}(u_1)_{xx} = -u_0 (u_0)_x - \frac{1}{\rho_0}p(\rho_0,\th_0)_x,\\
    &\frac{R}{\gamma-1}(\theta_1)_t - \frac{\kappa}{\rho_1}(\theta_1)_{xx}=-\frac{R}{\gamma-1}u_0(\theta_0)_x-\frac{p(\rho_0,\th_0)}{\rho_0}(u_0)_x+\frac{\mu}{\rho_0}|(u_0)_x|^2,\\
    &(u_1,\th_1)(0,x) = (u_0,\th_0), \quad (u_1, \th_1)(t,0) = (u_- , \th_-).
\end{aligned}
\right.
\end{align*}
Suppose $n\geq 2$. For given $(\rho_{n-1},u_{n-1},\th_{n-1})$, we define $\rho_n$ and then define $u_n$ and $\th_n$ by
\begin{align*}
    \left\{
    \begin{aligned}
    &(\rho_n)_t+ u_{n-1}(\rho_n)_x = -\rho_{n-1}(u_{n-1})_x\\
    &(u_n)_t - \frac{\mu}{\rho_n}(u_n)_{xx} = -u_{n-1} (u_{n-1})_x - \frac{1}{\rho_{n-1}}p(\rho_{n-1},\th_{n-1})_x,\\
    &\frac{R}{\gamma-1}(\theta_n)_t - \frac{\kappa}{\rho_n}(\theta_n)_{xx}=-\frac{R}{\gamma-1}u_{n-1}(\theta_{n-1})_x-\frac{p(\rho_{n-1},\th_{n-1})}{\rho_{n-1}}(u_{n-1})_x+\frac{\mu}{\rho_{n-1}}|(u_{n-1})_x|^2,\\
    &(\rho_n,u_n,\th_n)(0,x) = (u_0,\th_0), \quad (u_n, \th_n)(t,0) = (u_- , \th_-).
    \end{aligned}
    \right.
\end{align*}

We omit the details of the proof.
\end{proof}

\subsection{Construction of weight function}
To prove the main results, we employ the method of $a$-contraction with shifts, namely, a weighted relative entropy method. To this end, we construct a suitable weight function $a(t,x)$ to ensure a certain contraction property of the viscous shock wave.

We now define the weight function $a(t,x)=a(\xi)$ as follows:
\begin{equation}\label{def_a}
	a(\xi):=1+\frac{u_--\bar{u}(\xi)}{\sqrt{\delta}},\quad \xi = x-\sigma t,
\end{equation}
where $\delta = |u_+-u_-|$ denotes the shock strength. We note that the weight function $a$ satisfies $1\le a\le 1+\sqrt{\delta}<\frac{3}{2}$ for small enough $\delta$, and 
\begin{equation*} 
	a'(\xi)=-\frac{ \bar{u}'(\xi)}{\sqrt{\delta}}>0,\quad \mbox{and}\quad |a'(\xi)|\sim \frac{\bar{u}'(\xi)}{\sqrt{\delta}}.
\end{equation*} 
	\subsection{Construction of shift}
To obtain the stability estimate, the viscous shock wave needs to be shifted appropriately. Here, we explicitly construct the shift function.

We define the shift function $\bX:\bbr_+\to\bbr$ as a solution to the following ODE:
\begin{equation}\label{def_shift}
	\begin{aligned}
		\dot{\mathbf{X}}(t)&=-\frac{M}{\delta}\int_{\mathbb{R}_+}a^{\mathbf{X},\beta}\left[ R\frac{\bar{\theta}^{\mathbf{X},\beta}}{\bar{\rho}^{\mathbf{X},\beta}}(\rho-\bar{\rho}^{\mathbf{X},\beta})\bar{\rho}^{\mathbf{X},\beta}_x+\rho(u-\bar{u}^{\mathbf{X},\beta})\bar{u}^{\mathbf{X},\beta}_x+\frac{R}{\gamma-1}\frac{\rho}{\bar{\theta}^{\mathbf{X},\beta}}(\theta-\bar{\theta}^{\mathbf{X},\beta})\bar{\theta}_x^{\mathbf{X},\beta}\right]\,dx,\\
		\bX(0)&=0,
	\end{aligned}
\end{equation}
	where $M, \beta>0$ are positive constants which will be chosen later. Here, for any function $f:\bbr\to\bbr$, we use the abbreviated notation
\[
f^{\bX,\beta}(\cdot):=f(\cdot-\bX(t)-\beta).
\]
The existence and Lipschitz continuity of the shift $\mathbf{X}(t)$ can be proved by applying the Cauchy-Lipschitz theorem (see, for example, \cite[Lemma 3.2]{KVW-NSF}). Moreover, the definition of the shift in \eqref{def_shift} originates from the linear part of the $\mathbf{Y}$ in \eqref{term:Y}. Defining the shift in this way ensures that the shift contributes a good term when applying the Poincaré inequality in Lemma \ref{Poincare} (see \eqref{est:goodx}).

\subsection{A priori estimates}
We now state the main propositions on the \textit{a priori} estimates for both boundary value problems, \eqref{outflow} and \eqref{imperable}.

\begin{proposition}[Outflow problem]\label{prop:main}
	For a given constant state $(\rho_+,u_+, \theta_+) \in\Omega^{-}_{sub}\cup \Gamma_{trans}^{-}$, there exist positive constants $C_0, \delta_0,\e >0$ such that the following holds. \\
	For any $(u_-,\theta_-)$ satisfying $0>u_->u_+$ and $(u_-,\theta_-) \in S_3^{P}(\rho_+,u_+,\theta_+)$ with
	\[
	|u_+-u_-| + |\theta_+ - \theta_-|<\delta_0,
	\]
	let $\r_->0$ be the (unique) constant state such that $(\rho_-,u_-,\theta_-) \in S_3(\rho_+,u_+,\theta_+)$. Denote $(\bar{\rho},\bar{u},\bar{\theta})$ the viscous 3-shock wave with end states $(\r_-, u_-, \th_-)$ and $(\r_+, u_+, \th_+)$. Then there exists $\beta>0$ large enough (depending only on the shock strength $\delta_0$) such that the following holds.
	Suppose that $(\rho,u,\theta)$ is the solution to the outflow problem \eqref{eq:NSF}-\eqref{eq:initial}-\eqref{outflow} on $[0,T]$ for some $T>0$, and the shift $\bX(t)$ is defined in \eqref{def_shift}. Suppose that
	\begin{equation}\label{apriori1}
			\begin{aligned}
		&\rho-\bar{\rho}^{\bX,\beta}\in C([0,T];H^1(\R_+)),\\
		&(u-\bar{u}^{\bX,\beta}, \theta-\bar{\theta}^{\bX,\beta})\in C([0,T];H^1(\R_+))\cap L^2(0,T;H^2(\R_+)),
	\end{aligned}
	\end{equation}
	and
	\begin{equation}\label{apriori_small}
		\|\rho-\bar{\rho}^{\bX,\beta}\|_{L^\infty(0,T;H^1(\R_+))}+\|u-\bar{u}^{\bX,\beta}\|_{L^\infty(0,T;H^1(\R_+))} + 	\|\theta-\bar{\theta}^{\bX,\beta}\|_{L^\infty(0,T;H^1(\R_+))}\le \e.
	\end{equation}
	Then we have
	\begin{equation}\label{est:apriori}
		\begin{aligned} 
			&\sup\limits_{t\in [0,T]}\Big[\|\rho-\bar{\rho}^{\bX,\beta}\|_{H^1(\R_+)}+\|u-\bar{u}^{\bX,\beta}\|_{H^1(\R_+)} +\|\theta-\bar{\theta}^{\bX,\beta}\|_{H^1(\R_+)}\Big]\\
			&\quad +\sqrt{\int_0^T (\delta|\sX|^2+\mathbf{G}_1 + \mathbf{G}_2+ \mathbf{G}^S+ D_{\r_1} +D_{u_1}+D_{\theta_1}+D_{u_2} + D_{\theta_2})\,d\tau}\\
			&\quad + |u_-|\int_{0}^{T}|\left(\rho-\bar{\rho}, u- \bar{u}, \theta -\bar{\theta}\right)(t,0)|^2  \,dt+ |u_-| \int_{0}^{T} \left|(\rho -\bar{\rho})_x(t,0)\right|^2 \,dt     \\
			&\le C_0\left(\|\rho_0-\bar{\rho}^{\bX,\beta}(0,\cdot)\|_{H^1(\R_+)}+\|u_0-\bar{u}^{\bX,\beta}(0,\cdot)\|_{H^1(\R_+)}+ \|\theta_0-\bar{\theta}^{\bX,\beta}(0,\cdot)\|_{H^1(\R_+)} \right) +C_0e^{-C\delta\beta},
		\end{aligned}
	\end{equation}
	where $C_0$ is independent of $T$ and
	\begin{align}\label{good_terms}
		\begin{split}
			&\begin{aligned}
				& \mathbf{G}_1 := \frac{R\theta_-}{2\rho_-}\sqrt{\gamma R \theta_-} \int_{\mathbb{R}_+} a_x\left[(\rho- \bar{\rho}^{\bX,\beta})- \frac{\rho_-}{\gamma R \theta_-}(u-\bar{u}^{\bX,\beta})\right]^2 \,dx, \\
				& \mathbf{G}_2 := \frac{R\rho_-}{2(\gamma-1)\theta_-}\sqrt{\gamma R\theta_-} \int_{\mathbb{R}_+} a_x\left[(\theta-\bar{\theta}^{\bX,\beta}) - \frac{(\gamma-1)\theta_-}{\sqrt{\gamma R \theta_-}}(u-\bar{u}^{\bX,\beta})\right]^2 \,dx, \\
				& \mathbf{G}^S := \int_{\R_+} |\bar{u}^{\bX,\beta}_x| \left|\left(\rho -\bar{\rho}^{\bX,\beta},  u-\bar{u}^{\bX,\beta}, \theta -\bar{\theta}^{\bX,\beta}\right)\right|^2 \,dx,
			\end{aligned}\\
			&\begin{aligned}
				& D_{\r} := \int_{\R_+} |(\r - \rhotilX)_x|^2 \,dx, &&D_{u_1}:= \int_{\R_+}|(u-\bar{u}^{\bX,\beta})_{x}|^2\,dx,  \quad D_{\theta_1} := \int_{\R_+}|(\theta-\bar{\theta}^{\bX,\beta})_{x}|^2\,dx,  \\
				& D_{u_2}:= \int_{\R_+}|(u-\bar{u}^{\bX,\beta})_{xx}|^2\,dx, && D_{\theta_2} := \int_{\R_+}|(\theta-\bar{\theta}^{\bX,\beta})_{xx}|^2\,dx,
			\end{aligned}
		\end{split}
	\end{align}
	where the weight function $a$ is defined in \eqref{def_a}. In particular, for all $0\le t \le T$,
	\begin{equation} \label{bddx12}
		|\dot{\bX}(t)|\le C_0\lVert (\rho-\bar{\rho}^{\bX,\beta},u-\bar{u}^{\bX,\beta}, \theta-\bar{\theta}^{\bX,\beta} )(t,\cdot) \rVert_{L^\infty(\mathbb{R}_+)}.
	\end{equation}
\end{proposition}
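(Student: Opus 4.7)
The plan is to establish \eqref{est:apriori} by the method of $a$-contraction with shifts in Eulerian coordinates, separating the argument into a weighted $L^2$-level relative entropy estimate and subsequent parabolic higher-order estimates. For the $L^2$ level, introduce the weighted relative entropy
\[
\mathcal{E}(t):=\int_{\Rp} a^{\bX,\beta}(t,x)\,\eta\bigl(U\,\big|\,\tU^{\bX,\beta}\bigr)\,dx,
\]
where $U=(\rho,\rho u,E)$, $\tU^{\bX,\beta}=(\rhotilX,(\rhotilX\utilX),\bar{E}^{\bX,\beta})$, $\eta$ is the standard strictly convex mathematical entropy for the NSF system, and $a$ is the weight in \eqref{def_a}. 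Using \eqref{eq:NSFS} for $U$ and \eqref{eq:Vshock} translated by $\sigma t+\bX(t)+\beta$ for $\tU^{\bX,\beta}$, I would compute $\frac{d}{dt}\mathcal{E}(t)$ and organize the identity schematically as
\[
\frac{d}{dt}\mathcal{E}(t)=\sX\,\mathbf{Y}(t)-\mathbf{G}_1-\mathbf{G}_2-\mathbf{G}^S-D_{u_1}-D_{\theta_1}+\mathcal{B}\big|_{x=0}+\mathcal{R},
\]
where $\mathbf{Y}$ is the coefficient generated by the moving shift, $\mathcal{B}|_{x=0}$ collects boundary traces from integration by parts, and $\mathcal{R}$ collects cubic remainders and cross-terms with $\bar{u}_x$.

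The ODE \eqref{def_shift} is engineered so that $\sX$ is proportional (with large weight $M/\delta$) to the linear-in-perturbation part $\mathbf{Y}_{\mathrm{lin}}$ of $\mathbf{Y}$. Splitting $\mathbf{Y}=\mathbf{Y}_{\mathrm{lin}}+\mathbf{Y}_{\mathrm{quad}}$ and applying the sharp Poincar\'e-type inequality of Lemma \ref{Poincare} (with its optimal constant $1/2$) to the quadratic piece on the shock layer, the combination $\sX\,\mathbf{Y}-\mathbf{G}_1-\mathbf{G}_2$ can be absorbed into $-\delta|\sX|^2-\tfrac{3}{4}(\mathbf{G}_1+\mathbf{G}_2)$ for $M$ chosen sufficiently large (depending on $\delta_0$). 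This is the core $a$-contraction mechanism; together with the parabolic contribution $-\mathbf{G}^S-D_{u_1}-D_{\theta_1}$, it produces the $L^2$-level part of the dissipation on the left-hand side of \eqref{est:apriori}.

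For the higher-order estimates I would derive $D_\rho$ via the characteristic/Kanel'-type multiplier argument: apply $\partial_x$ to the continuity equation and test against $(\rho-\rhotilX)_x$, using the momentum equation to convert the missing viscosity in $\rho$ into a usable dissipation. Crucially, this is where the favorable boundary trace $|u_-|\,|(\rho-\rhotilX)_x(t,0)|^2$ enters on the left with the correct sign thanks to $u_-<0$. Multiplying the momentum equation in \eqref{eq:NSFS} by $-(u-\utilX)_{xx}$ and the temperature equation by $-(\theta-\thtilX)_{xx}$ then yields $D_{u_2}+D_{\theta_2}$; the Dirichlet conditions \eqref{outflow} eliminate the leading boundary traces, leaving only exponentially small mismatch between $\utilX(t,0),\thtilX(t,0)$ and $u_-,\theta_-$. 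Adding small multiples of these higher-order estimates to the $L^2$ identity, absorbing nonlinearities by the smallness of $\varepsilon$ and $\delta_0$ (together with Sobolev embedding on $\Rp$), closes the full $H^1$ estimate. The pointwise bound \eqref{bddx12} follows directly from \eqref{def_shift} via Cauchy--Schwarz and the $L^1$-integrability of $\bar{u}_x$.

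The main obstacle will be controlling the boundary contributions at $x=0$. Since no boundary condition is prescribed for $\rho$, every integration by parts produces traces involving $\rho(t,0)$; only the outflow sign $u_-<0$ rescues the analysis, converting the hyperbolic boundary traces into the positive dissipations $|u_-|\,|(\rho-\rhotilX)(t,0)|^2$ and $|u_-|\,|(\rho-\rhotilX)_x(t,0)|^2$ visible on the left of \eqref{est:apriori}. In parallel, the shifted shock does not exactly match the prescribed boundary data $(\rho_-,u_-,\theta_-)$ at $x=0$: by Lemma \ref{lem:viscous_shock} the mismatch is of order $\delta\,e^{-C\delta(\bX(t)+\beta)}$, and every such term (including the small mismatches in the $u,\theta$ traces) must be funneled into the single residual $C_0 e^{-C\delta\beta}$ on the right of \eqref{est:apriori} by fixing $\beta$ large before fixing $\varepsilon$ small. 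Arranging all of these boundary contributions with compatible signs, simultaneously with the Poincar\'e step that produces $-\delta|\sX|^2$, is the technical heart of the argument.
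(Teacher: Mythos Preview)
Your overall architecture (weighted relative entropy for $L^2$; Kanel'-type multiplier for $\rho_x$ exploiting $u_-<0$; test against $-(u-\utilX)_{xx}$ and $-(\theta-\thtilX)_{xx}$ for $D_{u_2},D_{\theta_2}$; absorb the boundary mismatch into $C_0e^{-C\delta\beta}$) matches the paper. However, your description of the core $a$-contraction step is not the mechanism that actually closes, and as written it would not work.

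First, the paper's weighted entropy is $\int a\,\bar\theta\,\eta(U|\bar U)\,dx$, not $\int a\,\eta(U|\bar U)\,dx$; the extra $\bar\theta$ weight is what makes the relative entropy reduce to the clean form $\rho[R\bar\theta\,\Phi(\bar\rho/\rho)+\frac{R}{\gamma-1}\bar\theta\,\Phi(\theta/\bar\theta)+\tfrac12(u-\bar u)^2]$ and is used throughout the algebra. Second, your schematic identity is misleading: the terms $\mathbf G_1,\mathbf G_2,\mathbf G^S$ do \emph{not} appear with a good sign directly from $\frac{d}{dt}\mathcal E$. What comes out is $\dot{\mathbf X}\mathbf Y+\mathcal J^{bad}-\mathcal J^{good}+\mathcal P$, and the leading bad piece $\mathbf B_1=\int a|\bar u_x|[\cdots]\,dx$ is genuinely dangerous. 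The good terms $\mathbf G_1,\mathbf G_2$ are extracted from the hyperbolic combination $\mathbf B_2-\mathbf G$, while $\mathbf G^S$ only emerges after the Poincar\'e step.

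Third---and this is the real gap---the Poincar\'e inequality of Lemma \ref{Poincare} is not applied to the quadratic part of $\mathbf Y$; it is applied to the \emph{diffusion} $\mathbf D=\mathbf D_{u_1}+\mathbf D_{\theta_1}$ after the change of variable $y=(u_--\bar u)/\delta$. One shows $\mathbf D\ge c\,\delta\int_{y_0}^1(y-y_0)(1-y)|\partial_y w|^2\,dy$, uses Poincar\'e to convert this into control of $\delta\int_{y_0}^1 w^2\,dy$, and then compares coefficients with $\mathbf B_1$. Crucially, $\mathbf D_{u_1}$ alone is insufficient because the coupling constant $\frac{\mu R\gamma}{\mu R\gamma+\kappa(\gamma-1)^2}\to 0$ as $\gamma\to\infty$; the paper must also run Poincar\'e on $\mathbf D_{\theta_1}$ and rewrite $|\theta-\bar\theta|^2$ in terms of $|u-\bar u|^2$ modulo $\mathbf G_2$ to get enough dissipation. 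The closing inequality is the explicit $-\frac{\delta}{2M}|\dot{\mathbf X}|^2+\mathbf B_1+\mathbf B_2-\mathbf G-\tfrac78\mathbf D\le -\tfrac14(\mathbf G_1+\mathbf G_2)-C_*\mathbf G^S$, and it relies on the algebraic fact $\alpha_\gamma:=\frac{\gamma^2+5\gamma-4}{2\gamma}-\frac{7(\gamma+1)}{8}<0$ together with a specific choice of $M$, not merely ``$M$ sufficiently large''. Your plan should be reorganized around this diffusion--Poincar\'e mechanism rather than around $\mathbf Y_{\mathrm{quad}}$.
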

The \textit{a priori} estimate for the impermeable case is analogous to the one in the outflow case, as follows:
\begin{proposition}[Impermeable wall problem]\label{prop:main2}
	For given $(\rho_+, \theta_+)$, there exist positive constants $C_0, \delta_0,\e >0$ such that the following holds. For any constant states $u_+<0$ satisfying $$- u_+ = |u_+| < \delta_0,$$ let $\r_-$ and $\theta_-$ be the (unique) constant states such that $(\rho_-,u_-,\theta_-):= (\rho_-,0,\theta_-)\in S_3(\rho_+,u_+,\theta_+)$. Denote  $(\bar{\rho},\bar{u}, \bar{\theta})$ the viscous 3-shock wave with end states $(\r_-, u_-, \th_-)$ and $(\r_+, u_+, \th_+)$. Then there exists $\beta>0$ large enough (depending only on the shock strength $\delta_0$) such that the following holds. Suppose that $(\rho,u,\theta)$ is the solution to the impermeable wall problem \eqref{eq:NSF}-\eqref{eq:initial}-\eqref{imperable} on $[0,T]$ for some $T>0$, and the shift $\bX(t)$ is defined in \eqref{def_shift}. Assume further that the solution $(\r,u,\th)$ satisfies \eqref{apriori1} and \eqref{apriori_small}. Then the estimates \eqref{est:apriori} and \eqref{bddx12} hold.
\end{proposition}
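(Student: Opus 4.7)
The plan is to mirror the proof of Proposition \ref{prop:main} step by step, exploiting the fact that the impermeable condition $u(t,0)=u_-=0$ makes the boundary analysis strictly simpler than in the outflow case. The weight function \eqref{def_a} reduces to $a(\xi)=1-\bar{u}(\xi)/\sqrt{\delta}$, which still satisfies $1\le a \le 1+\sqrt{\delta}$ and $a'(\xi)=-\bar{u}'(\xi)/\sqrt{\delta}>0$; the shift $\bX$ defined by \eqref{def_shift} is used without modification. First I would set up the weighted relative entropy identity for $\frac{d}{dt}\int_{\mathbb{R}_+} a^{\bX,\beta}\eta(U\mid \bar U^{\bX,\beta})\,dx$, where $\eta$ is the (normalized) relative entropy of the NSF system, and perform the same integration by parts as in the outflow proof. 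The interior contributions are independent of the boundary data and produce the contraction good terms $\mathbf{G}_1,\mathbf{G}_2,\mathbf{G}^S$, the dissipation $D_{u_1},D_{\theta_1}$, and the shift-quadratic term $\delta|\sX|^2$ coming from the specific choice of $\bX$ in \eqref{def_shift} combined with Lemma \ref{Poincare}, exactly as in Proposition \ref{prop:main}.

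The essential difference lies in the boundary integrals at $x=0$. In the outflow case the sign of $u_-<0$ generates the good boundary dissipation $|u_-||(\rho-\bar{\rho},u-\bar{u},\theta-\bar{\theta})(t,0)|^2$ that is used to absorb several boundary remainders. For the impermeable problem these good terms vanish, but so do all of the bad terms they were needed to control: every convective boundary contribution carries a factor of either $u(t,0)=0$ or $\bar{u}(-\sigma t-\bX-\beta)$, and by \eqref{shock_prop} together with $u_-=0$ the latter is bounded by $C\delta e^{-C\delta\beta}$. Likewise, the viscous boundary remainders $\mu u_x(u-\bar{u})|_{x=0}$ and $\kappa\theta_x(\theta-\bar{\theta})|_{x=0}$ contain the factors $(u-\bar{u})|_{x=0}=-\bar{u}|_{x=0}$ and $(\theta-\bar{\theta})|_{x=0}=\theta_--\bar{\theta}|_{x=0}$, both of which are $O(e^{-C\delta\beta})$ for $\beta$ chosen large. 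Applying Young's inequality and absorbing the resulting squared pieces into $D_{u_1}$ and $D_{\theta_1}$ yields the $L^2$ part of \eqref{est:apriori} with remainder $Ce^{-C\delta\beta}$.

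The higher-order estimates proceed identically. When estimating $(\rho-\rhotilX)_x$, the boundary term that in the outflow setting produces $|u_-||(\rho-\bar{\rho})_x(t,0)|^2$ now vanishes identically, so no additional argument is needed; the remaining $H^1$ and $L^2_tH^2_x$ estimates for $u$ and $\theta$ follow by the standard energy method, with every boundary contribution either killed by $u(t,0)=0$ and $\theta(t,0)=\theta_-$ or rendered exponentially small in $\beta$. The Lipschitz bound \eqref{bddx12} follows directly from \eqref{def_shift} via the same Cauchy--Schwarz and Sobolev embedding argument as in the outflow case, since \eqref{def_shift} is independent of the boundary type. I expect the main obstacle to be purely bookkeeping: one must revisit each integration by parts in the proof of Proposition \ref{prop:main} and verify that the associated boundary term either vanishes automatically or is absorbed using $\beta\gg 1/\delta$ and \eqref{apriori_small}; no fundamentally new estimate is required beyond those developed for the outflow problem, reflecting the authors' remark that the impermeable case is technically simpler.
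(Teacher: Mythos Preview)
Your proposal is essentially correct and mirrors the paper's own treatment: the paper proves all the intermediate lemmas (the weighted relative entropy identity, the leading-order balance via Lemma~\ref{Poincare}, the boundary lemma, and the three $H^1$ lemmas) simultaneously for both problems, branching only at the boundary terms, and your account of why each boundary contribution either vanishes (factors of $u(t,0)=0$) or is $O(e^{-C\delta\beta})$ (factors of $\bar{u}(t,0)$ or $\bar{\theta}(t,0)-\theta_-$) matches the paper exactly. One small correction: the viscous boundary remainders $\mu(u-\bar{u})(u-\bar{u})_x|_{x=0}$ and $\kappa(\theta-\bar{\theta})(\theta-\bar{\theta})_x/\theta|_{x=0}$ cannot be absorbed into $D_{u_1},D_{\theta_1}$ alone, since the pointwise trace of the derivative is not controlled by the $L^2$ norm; the paper instead uses the interpolation $\|\psi_x\|_{L^\infty}^4\le C\|\psi_x\|_{L^2}^2\|\psi_{xx}\|_{L^2}^2$ together with \eqref{apriori_small} to produce a remainder $C\varepsilon^2\int_0^T(D_{u_2}+D_{\theta_2})\,d\tau$, which is then absorbed once the $H^1$ estimates for $u,\theta$ are in hand.
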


\subsection{Proof of Theorem \ref{thm:outflow}}
In the outflow setting, we use a continuation argument based on Proposition \ref{prop:local} and Proposition \ref{prop:main} (or Proposition \ref{prop:main2} for the impermeable case) to establish the global-in-time existence of solutions. We also use Proposition \ref{prop:main} (or Proposition \ref{prop:main2} for the impermeable case) to prove the large-time behavior \eqref{long_time_outflow} (or \eqref{long_time_impermeable}). Since these proofs are similar to those in the previous articles (e.g. \cite{HKKL24, KOW25}), we omit the details.\\

Hence, it only remains to prove Proposition \ref{prop:main} and Proposition \ref{prop:main2}.

\section{Zeroth order estimates}
\setcounter{equation}{0}
In this section, we provide the zeroth order estimates for two boundary value problems, by using the method of $a$-contraction with shifts. 
In what follows, for notational simplicity, we suppress the dependence on the shift $\bX$ and $\beta$. Specifically, we use the following concise notations without any confusion:
\begin{align*}
	\begin{split}
		\begin{aligned}
		&a(t,x)=a(x-\sigma t-\bX(t)-\beta),\\
		&\bar{\rho}(t,x) = \bar{\rho}^{\bX,\beta}(t,x)=\bar{\rho}(x-\sigma t-\bX(t)-\beta),&&
		\bar{u}(t,x) = \bar{u}^{\bX,\beta}(t,x)=\bar{u}(x-\sigma t-\bX(t)-\beta),\\
		&\bar{\theta}(t,x) = \bar{\theta}^{\bX,\beta}(t,x)=\bar{\theta}(x-\sigma t-\bX(t)-\beta),
		&&\bar{E}(t,x) = \bar{E}^{\bX,\beta}(t,x)=\bar{E}(x-\sigma t-\bX(t)-\beta).
		\end{aligned}
	\end{split}
\end{align*}
In what follows, we denote by $C$ a positive $O(1)$-constant that may vary from line to line, but is independent of the parameters $\delta(=|u_- - u_+|), \varepsilon, \beta$, and the time $T$.	

This section is dedicated to the proof of the following lemma.
	\begin{lemma}\label{lem:main}
	Under the hypothesis of Proposition \ref{prop:main} (or Proposition \ref{prop:main2}), there exists a positive constant $C$ such that
	\begin{align*}
		\begin{aligned} 
			&\sup_{t \in [0,T]}\|(\rho-\bar{\rho}, u-\bar{u}, \theta - \bar{\th})(t,\cdot)\|_{L^2(\R_+)}^2+\int_0^T (\delta|\sX(s)|^2 + \mathbf{G}_1 +\mathbf{G}_2 + \mathbf{G}^S +D_{u_1}  + D_{\theta_1}) \,d\tau  \\
			&\qquad + |u_-|\int_{0}^{T}|\left(\rho(\tau,0) -\bar{\rho}(\tau,0), u(\tau, 0)- \bar{u}(\tau,0), \theta(\tau,0) -\bar{\theta}(\tau,0)\right)|^2\,d\tau   \\
			&\quad\le C\|(\rho-\bar{\rho}, u-\bar{u}, \theta - \bar{\th})(0,\cdot)\|_{L^2(\R_+)}^2\\
			&\qquad +Ce^{-C\delta \beta}  +C\varepsilon^2\int_{0}^T\left(\|(u-\bar{u})_{xx}\|_{L^2(\mathbb{R}_+)}^{2} + \|(\theta-\bar{\theta})_{xx}\|_{L^2(\mathbb{R}_+)}^{2}\right)\,d\tau,
		\end{aligned}
	\end{align*}
	where $\mathbf{G}_1, \mathbf{G}_2, \mathbf{G}^S$, $D_{u_1}$, and $D_{\th_1}$ are the terms defined in \eqref{good_terms}.
\end{lemma}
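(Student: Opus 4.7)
The plan is to set up the weighted relative-entropy functional
\[
\mathcal{E}(t):=\int_{\Rp} a(t,x)\,\eta\bigl((\rho,u,\theta)\,\big|\,(\rhotil,\util,\thtil)\bigr)(t,x)\,dx,
\]
where $\eta(U|\Util)=\tfrac12\rho(u-\util)^2+R\rhotil\thtil\,\Psi(\rho/\rhotil)+\tfrac{R}{\gamma-1}\rho\thtil\,\Phi(\theta/\thtil)$ is the standard NSF relative entropy with $\Psi(s)=s\log s-s+1$ and $\Phi(s)=s-\log s-1$, and to differentiate $\mathcal{E}$ in time using \eqref{eq:NSFS} together with the shifted viscous-shock equations, which carry a forcing proportional to $\sX$. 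After integrating by parts on $\Rp$, I expect the derivative to organize into four pieces: (a) a hyperbolic/pressure quadratic form weighted by $a_x\sim\delta^{-1/2}|\util_x|$; (b) the viscous part yielding $-(D_{u_1}+D_{\theta_1})$ plus cross-terms driven by $\util_{xx},\thtil_{xx}$; (c) a shift contribution $\sX\cdot\mathbf{Y}$ whose linear-in-perturbation part is precisely the integrand in \eqref{def_shift}; and (d) boundary contributions at $x=0$.

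The boundary analysis is where the outflow sign $u_-<0$ is essential. Using $u(t,0)=u_-$ in the convective flux at $x=0$, the leading boundary contribution is $-a(t,0)\,u_-\,\eta_{\mathrm{quad}}\bigl((\rho,u,\theta)\big|(\rhotil,\util,\thtil)\bigr)(t,0)$, which has the favorable sign and furnishes the positive dissipation $|u_-|\,|(\rho-\rhotil,u-\util,\theta-\thtil)(\tau,0)|^2$ asserted in the lemma. The residual boundary sources are (i) the viscous pieces $\mu u_x(\cdot,0)(u-\util)(\cdot,0)$ and $\kappa\theta_x(\cdot,0)(\theta-\thtil)(\cdot,0)$, which I split via Young's inequality into a small multiple of $D_{u_1}+D_{\theta_1}$ plus a small multiple of the boundary dissipation just gained; and (ii) mismatches of the form $(\util,\thtil,\rhotil)(-\sigma t-\bX(t)-\beta)-(u_-,\theta_-,\rho_-)$, bounded by $Ce^{-C\delta\beta}$ through Lemma~\ref{lem:viscous_shock} once $\beta$ is large, which produce the $Ce^{-C\delta\beta}$ on the right-hand side. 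For the impermeable case all convective boundary terms vanish because $u_-=0$, which trivializes this step.

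Inside $\Rp$, to extract $\mathbf{G}_1,\mathbf{G}_2,\mathbf{G}^S$ I would split $(\rho-\rhotil,\theta-\thtil)$ along and transverse to the $3$-eigendirection using \eqref{estderi}; the $a_x$-weighted quadratic form then decomposes, modulo $O(\sqrt\delta)$ errors absorbable into itself, into $\mathbf{G}_1+\mathbf{G}_2$ on the transverse part and $\mathbf{G}^S$ on the parallel part. The shift term $\sX\mathbf{Y}$ splits into a principal piece that yields $-\tfrac{\delta}{M}|\sX|^2$ after substituting \eqref{def_shift}, plus a remainder that, together with the leftover indefinite $a_x$-terms, is closed using the sharp Poincar\'e inequality (Lemma~\ref{Poincare}) on the support of $a_x$; the optimality of the constant $\tfrac12$ is precisely what makes the contraction close. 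Cubic and higher-order remainders are controlled by the a priori smallness \eqref{apriori_small}; the only nontrivial one, of the form $\|(u-\util,\theta-\thtil)\|_{L^\infty}\cdot(D_{u_1}+D_{\theta_1})$, produces the $C\e^2(D_{u_2}+D_{\theta_2})$ term on the right via the Sobolev bound $\|\cdot\|_{L^\infty}\lesssim\|\cdot\|_{H^1}^{1/2}\|(\cdot)_x\|_{H^1}^{1/2}$.

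The main obstacle I anticipate is the joint handling of two delicate cancellations. The \emph{interior} one is between the bad part of $\sX\mathbf{Y}$ and the indefinite part of the $a_x$-quadratic form via Poincar\'e, which forces the specific choices of the weight \eqref{def_a} and of the shift ODE \eqref{def_shift}. The \emph{boundary} one concerns the density $\rho$: because the $\rho$-equation is hyperbolic there is no viscous dissipation available to absorb a stray pressure-type boundary term $R\thtil(t,0)(\rho-\rhotil)(t,0)(u-\util)(t,0)$, and the only cushion is the sign $u_-<0$. One must verify by direct calculation that the coefficients inside $\eta$ are exactly arranged so that this pressure contribution is swallowed by the positive boundary dissipation $|u_-|\,|(\rho-\rhotil,u-\util,\theta-\thtil)(\tau,0)|^2$ without residue (again modulo $e^{-C\delta\beta}$). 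Once these two cancellations are executed cleanly, integrating in $t\in[0,T]$ and invoking the initial smallness yields the claimed inequality.
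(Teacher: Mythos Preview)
Your overall architecture matches the paper's: weighted relative entropy, time derivative split into hyperbolic $a_x$-part, diffusion, shift contribution $\sX\mathbf{Y}$, and boundary terms; eigendirection decomposition to produce $\mathbf{G}_1,\mathbf{G}_2$; and Poincar\'e to close. But two points are mishandled.

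\textbf{Where the second-order terms really come from.} You attribute the $C\e^2(D_{u_2}+D_{\theta_2})$ on the right-hand side to an interior cubic remainder of the form $\|(u-\util,\theta-\thtil)\|_{L^\infty}(D_{u_1}+D_{\theta_1})$. In fact all interior cubic and cross terms are absorbed into $C(\e+\delta)(\mathbf{G}^S+D_{u_1}+D_{\theta_1})$ with no $D_{u_2},D_{\theta_2}$ needed. The second-order diffusions enter \emph{only} through the viscous boundary terms $\mu a(u-\util)(u-\util)_x\big|_{x=0}$ and $\kappa\tfrac{a}{\theta}(\theta-\thtil)(\theta-\thtil)_x\big|_{x=0}$. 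Your proposed treatment of these---Young's inequality into ``a small multiple of $D_{u_1}+D_{\theta_1}$ plus a small multiple of the boundary dissipation''---cannot work: the pointwise value $(u-\util)_x(t,0)$ is not controlled by the $L^2$ quantity $D_{u_1}$, and in the impermeable case $u_-=0$ there is no boundary dissipation to borrow from at all. The correct mechanism is that $(u-\util)(t,0)=u_--\util(t,0)$ and $(\theta-\thtil)(t,0)=\theta_--\thtil(t,0)$ are themselves $O(\delta e^{-C\delta t}e^{-C\delta\beta})$, so one bounds
\[
\Bigl|\mu a(u-\util)(u-\util)_x\big|_{x=0}\Bigr|\le C|\util(t,0)-u_-|\,\|(u-\util)_x\|_{L^\infty}\le C|\util(t,0)-u_-|\,\|(u-\util)_x\|_{L^2}^{1/2}\|(u-\util)_{xx}\|_{L^2}^{1/2},
\]
and then Young's inequality with exponents $(4/3,4)$ yields $Ce^{-C\delta\beta}+C\e^2 D_{u_2}$. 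This works uniformly for both boundary conditions.

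\textbf{The pressure boundary term is not the hard one.} You flag $R\thtil(\rho-\rhotil)(u-\util)\big|_{x=0}$ as the delicate cancellation requiring the sign $u_-<0$. It is not: since $(u-\util)(t,0)$ is exponentially small in $\beta$, this term is simply $\le C\|\rho-\rhotil\|_{L^\infty}\int_0^t|\util(\tau,0)-u_-|\,d\tau\le Ce^{-C\delta\beta}$, with no need to tap the boundary dissipation. The sign $u_-<0$ matters only for the \emph{convective} boundary flux $au\,\thtil\eta(U|\Util)\big|_{x=0}$, which is what produces the good term $|u_-|\int_0^T|(\rho-\rhotil,u-\util,\theta-\thtil)(\tau,0)|^2\,d\tau$.

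A smaller omission: in the Poincar\'e step the paper stresses that $\mathbf{D}_{u_1}$ alone is \emph{not} enough to dominate the leading bad term $\mathbf{B}_1$ (the ratio $\mu R\gamma/(\mu R\gamma+\kappa(\gamma-1)^2)$ can be arbitrarily small for large $\gamma$); one must also apply Poincar\'e to $\mathbf{D}_{\theta_1}$ and convert it, via the transverse decomposition, into an additional good term in $w=u-\util$. Your sketch ``closed using the sharp Poincar\'e inequality'' does not capture this.
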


\subsection{Relative entropy method}
	The proofs of Proposition \ref{prop:main} and \ref{prop:main2} are based on the relative entropy method, introduced by Dafermos and DiPerna \cite{D96, D79}. 
	
	First, let $U$ denote the conserved quantities:
	\[
	U := \begin{pmatrix}
		\rho \\ m \\ E
	\end{pmatrix} = \begin{pmatrix}
		\rho \\ \rho u \\ \rho\left(e + \frac{1}{2}u^2\right)
	\end{pmatrix}.
	\] 
	According to the Gibbs relation $\theta d s = d e + p d \left(\frac{1}{\r}\right)$, the entropy $s(U)$ is given by
	\[
	s(U) = -R \ln \r + \frac{R}{\gamma - 1} \ln \theta.
	\] 
	The mathematical entropy $\eta(U)$ is then defined as $\eta(U) := -(\rho s)(U)$. For a given entropy $\eta(U)$ and conserved quantities $U$ and $V$, the relative entropy $\eta(U|V)$ is defined by 
	\[
	\eta(U|V):=\eta(U) - \eta(V) - \nabla \eta(V) \cdot (U-V),
	\]
	which is locally quadratic and positive definite.

	If $U$ is a solution to \eqref{eq:NSF} and $\bar{U}$ is a (shifted) viscous shock wave, then the relative entropy weighted by $\bar{\theta}$, as provided in Appendix \ref{Appendix A}, is given by
\begin{align*}
	\bar{\theta}\eta(U|\Bar{U})=\rho \left[R\bar{\theta} \left( \frac{\bar{\rho}}{\rho }-\ln \frac{\bar{\rho}}{\rho}  -1 \right) + \frac{R}{\gamma -1} \bar{\theta} \left(\frac{\theta}{\bar{\theta}} -\ln \frac{\theta}{\bar{\theta}} -1\right) + \frac{1}{2}(u-\bar{u})^2\right].
\end{align*}
Then by virtue of the convex function $\Phi(z):=z-\ln z -1$, it can be concisely rewritten as
\begin{align*} 
		\bar{\theta}\eta(U|\Bar{U})= \rho\left[R\bar{\theta} \Phi\left(\frac{\bar{\rho}}{\rho}\right)+\frac{R}{\gamma-1}\bar{\theta} \Phi\left(\frac{\theta}{\bar{\theta}}\right)+\frac{1}{2}(u-\bar{u})^2\right].
\end{align*}
Next, we will compute the relative entropy of $U$ and $\bar{U}$ weighted by $a(t,x) \bar{\theta}(t,x)$:
\[\int_{\mathbb{R}_+} a(t,x) \bar{\theta} (t,x)\eta\big(U(t,x) | \Bar{U}(t,x)\big) \,dx. \]
	

\begin{lemma}\label{lma:BRE}
	Let $a(t,x)$ be the weight function defined by \eqref{def_a}. Let $U$ be the solution to \eqref{eq:NSF} and $\bar{U}$ be the shifted viscous shock wave. Then we have 
	\begin{align*}
	\frac{d}{dt}	\int_{\mathbb{R}_+} a (t,x)\bar{\theta} (t,x)\eta\left(U(t,x) | \Bar{U}(t,x)\right) \,dx  = \dot{\mathbf{X}} \mathbf{Y}(U) + \mathcal{J}^{bad}(U) - \mathcal{J}^{good}(U) + \mathcal{P}(U),
	\end{align*}
	where 
\begin{align}
&\begin{aligned}\label{term:Y}
	\mathbf{Y}(U) :=&- \int_{\mathbb{R}_+} a_x \bar{\theta} \eta(U|\bar{U})  \,dx - \int_{\mathbb{R}_+} a \rho  \bar{\theta}_x \left[R\bar{\theta} \Phi\left(\frac{\bar{\rho}}{\rho}\right)+\frac{R}{\gamma-1}\bar{\theta} \Phi\left(\frac{\theta}{\bar{\theta}}\right)\right] \,dx \\
	& + \int_{\mathbb{R}_+} a \left[\rho \bar{u}_x(u-\bar{u}) + R\bar{\theta} (\rho -\bar{\rho}) \frac{\bar{\rho}_x}{\bar{\rho}} + \frac{R}{\gamma -1} \rho \frac{\theta -\bar{\theta}}{\bar{\theta}} \bar{\theta}_x\right]\,dx,
\end{aligned}\\
&\begin{aligned} \notag 
	\mathcal{J}^{bad}(U) & = \int_{\mathbb{R}_+} a\bigg[ -\rho \bar{u}_x (u-\bar{u})^2 + R \bar{\theta}_x (\rho -\bar{\rho})(u-\bar{u}) + \rho (u-\sigma)\bar{\theta}_x \bigg(R\Phi\left(\frac{\bar{\rho}}{\rho}\right)+\frac{R}{\gamma-1}\Phi\left(\frac{\theta}{\bar{\theta}}\right)\bigg)    \\
	&\phantom{\quad +  \int_{\mathbb{R}_+} a\bigg[ }-\frac{R}{\gamma- 1} \rho \bar{\theta}_x (u-\bar{u}) \frac{\theta -\bar{\theta}}{\theta }\bigg] \,dx + \int_{\mathbb{R}_+} a_x (u-\bar{u})(p-\bar{p}) \,dx \\
	&\quad - \int_{\mathbb{R}_+}  a_x \bigg[\mu (u-\bar{u})(u-\bar{u})_x   + \kappa \frac{(\theta -\bar{\theta})}{\theta}(\theta -\bar{\theta})_x \bigg]\,dx \\
	&\quad + \int_{\mathbb{R}_+} a\bigg[-\frac{R}{\gamma-1} \rho \bar{\theta}_x (u-\bar{u}) \frac{(\theta- \bar{\theta})^2}{\theta \bar{\theta}} + \mu \rho \bar{u}_{xx} (u-\bar{u}) (\frac{1}{\rho} - \frac{1}{\bar{\rho}}) + \mu \frac{\theta -\bar{\theta}}{\theta} (u^2_x - \bar{u}^2_x )   \\
	&\phantom{:=-\int_{\mathbb{R}_+} a\bigg[} + \mu \rho \bar{u}^2_x \frac{\theta - \bar{\theta}}{\theta} (\frac{1}{\rho} - \frac{1}{\bar{\rho}}) - \kappa \frac{\rho}{\bar{\rho}} \frac{(\theta -\bar{\theta})^2}{\theta \bar{\theta}} \bar{\theta}_{xx}  - \mu  \frac{\rho}{\bar{\rho}}  \frac{(\theta -\bar{\theta})^2}{\theta \bar{\theta}} \bar{u}_x^2 + \kappa \rho \frac{\theta -\bar{\theta}}{\theta} (\frac{1}{\rho } -\frac{1}{\bar{\rho}}) \bar{\theta}_{xx}   \\
	&\phantom{:=-\int_{\mathbb{R}_+} a\bigg[} + \kappa \frac{\theta_x}{\theta^2} (\theta -\bar{\theta}) (\theta -\bar{\theta})_x\bigg] \,dx,
\end{aligned}
\end{align}
and
\begin{align}\label{term:Jgood}
	\begin{aligned}
			\mathcal{J}^{good} (U) := &  \int_{\mathbb{R}_+} a_x (\sigma -u)  \bar{\theta} \eta (U|\bar{U}) \,dx + \int_{\mathbb{R}_+} a \left(\mu |(u-\bar{u})_x|^2 + \frac{\kappa}{\theta}|(\theta -\bar{\theta})_x|^2\right)\,dx, \\
			\mathcal{P}:= & \bigg[a u \bar{\theta} \eta(U|\bar{U}) - \mu a (u-\bar{u})(u-\bar{u})_x - \kappa \frac{a}{\theta}(\theta -\bar{\theta})(\theta -\bar{\theta})_x + Ra \rho (u-\bar{u})(\theta -\bar{\theta})  \\
			& + Ra \bar{\theta}(\rho -\bar{\rho})(u-\bar{u})\bigg]\bigg|_{x=0} =: \mathcal{P}_1 + \mathcal{P}_2 + \mathcal{P}_3 +\mathcal{P}_4 +\mathcal{P}_5  .
		\end{aligned}
	\end{align}
\end{lemma}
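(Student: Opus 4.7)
The plan is to derive the stated identity by differentiating under the integral, substituting the NSF system \eqref{eq:NSFS} for $U$ and the (shifted) profile equations for $\bar U$, and integrating by parts on $\mathbb{R}_+$. Since $a(t,x)=a(x-\sigma t-\bX(t)-\beta)$ and $\bar U(t,x)=\bar U(x-\sigma t-\bX(t)-\beta)$, every time derivative of a profile quantity becomes $-(\sigma+\dot{\bX})\,\partial_x$ of that quantity, so collecting the $\dot{\bX}$-proportional pieces will define $\mathbf{Y}(U)$ while the $\sigma$-proportional pieces will combine with the flux and dissipation contributions.

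Starting from the explicit formula derived in Appendix \ref{Appendix A},
\[
\bar\theta\,\eta(U|\bar U)=\rho\left[R\bar\theta\,\Phi(\bar\rho/\rho)+\tfrac{R}{\gamma-1}\bar\theta\,\Phi(\theta/\bar\theta)+\tfrac12(u-\bar u)^2\right],
\]
I would split $\frac{d}{dt}\int a\bar\theta\,\eta(U|\bar U)\,dx$ into the $a_t$-contribution, the $\bar\theta_t$-contribution, and the genuine relative-entropy derivative
\[
\frac{d}{dt}\eta(U|\bar U)=[\nabla\eta(U)-\nabla\eta(\bar U)]\,U_t-\nabla^2\eta(\bar U)\,\bar U_t\cdot(U-\bar U).
\]
The first two produce, after writing $a_t=-(\sigma+\dot{\bX})a_x$ and $\bar\theta_t=-(\sigma+\dot{\bX})\bar\theta_x$, the first two summands of $\mathbf{Y}$ multiplied by $\dot{\bX}$, plus $\sigma$-counterparts which go into the first piece of $\mathcal{J}^{good}$ (the $\int a_x(\sigma-u)\bar\theta\,\eta(U|\bar U)\,dx$ term after combining with the convective $u$-transport). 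In the third part, $U_t$ is substituted from \eqref{eq:NSFS} and $\bar U_t$ from \eqref{eq:Vshock} (with the $\dot{\bX}\bar U_x$ correction); expanding the quadratic form $\nabla^2\eta(\bar U)\bar U_t\cdot(U-\bar U)$ yields precisely the three remaining terms of $\mathbf{Y}$, namely $\rho\bar u_x(u-\bar u)$, $R\bar\theta(\rho-\bar\rho)\bar\rho_x/\bar\rho$, and $\tfrac{R}{\gamma-1}\rho(\theta-\bar\theta)\bar\theta_x/\bar\theta$, each multiplied by $\dot{\bX}$.

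For the remaining $\sigma$- and dissipation-driven terms I would integrate by parts in $x$ once on each flux and each diffusive contribution. The coercive pieces $\int a\,\mu|(u-\bar u)_x|^2\,dx$ and $\int a\,\tfrac{\kappa}{\theta}|(\theta-\bar\theta)_x|^2\,dx$ emerge from acting $\partial_x$ on the leading quadratic factors $(u-\bar u)$ and $(\theta-\bar\theta)/\theta$ respectively, and together with the convective $a_x(\sigma-u)\bar\theta\,\eta$ term they assemble into $-\mathcal{J}^{good}$. All remaining terms — the ``bad'' convective ones with $a_x$, the cross derivatives landing on the weight $a$ in the form $a_x\mu(u-\bar u)(u-\bar u)_x$ and $a_x\kappa(\theta-\bar\theta)(\theta-\bar\theta)_x/\theta$, the relative diffusion remainders involving $\bar u_{xx}$, $\bar\theta_{xx}$, $\bar u_x^2$, and the $(1/\rho-1/\bar\rho)$ discrepancies that arise because $\rho$ multiplies the viscous and heat-flux terms in \eqref{eq:NSFS} — fall exactly into the form written in $\mathcal{J}^{bad}$. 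Finally, every integration by parts on the half line $\mathbb{R}_+$ produces a boundary contribution at $x=0$ (the contribution at $x=+\infty$ vanishes by the decay of the perturbation and the exponential decay in Lemma \ref{lem:viscous_shock}); the five surviving boundary terms are exactly $\mathcal{P}_1,\ldots,\mathcal{P}_5$, with $\mathcal{P}_1$ from the convective flux, $\mathcal{P}_2,\mathcal{P}_3$ from the viscous and heat-conductive fluxes, and $\mathcal{P}_4,\mathcal{P}_5$ from the pressure-type cross terms.

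The difficulty here is purely organizational: the system is $3\times 3$ with both hyperbolic and parabolic structure, and one must keep separate track of (i) the distinction between primitive variables $(\rho,u,\theta)$ used in the weighted entropy and the conservative variables $U=(\rho,\rho u,E)$ in which the relative entropy machinery is naturally stated, (ii) the many $\rho/\bar\rho$ factors produced when dividing the momentum and energy equations by $\rho$ as in \eqref{eq:NSFS}, which are responsible for the $(1/\rho-1/\bar\rho)$ residuals in $\mathcal{J}^{bad}$, and (iii) the signs in the boundary collection. I expect this last step — producing the precise list $\mathcal{P}_1,\ldots,\mathcal{P}_5$ and checking that no extra boundary term is overlooked — to be where the most care is needed, since it is what will be used to exploit the outflow/impermeable boundary conditions in the subsequent sections.
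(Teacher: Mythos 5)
Your overall strategy coincides with the paper's: differentiate $\int_{\mathbb{R}_+} a\bar\theta\,\eta(U|\bar U)\,dx$ in time, convert the time derivatives of $a$ and of the profile quantities into $-(\sigma+\dot{\bX})\partial_x$ of the same quantities, substitute the system and the shifted profile equations, integrate by parts on $\mathbb{R}_+$, and sort the outcome into the $\dot{\bX}$-linear functional $\mathbf{Y}$, the good terms, the bad terms, and the boundary traces; your attribution of $\mathcal{P}_1$ to the convective flux, $\mathcal{P}_2,\mathcal{P}_3$ to the viscous and heat fluxes, and $\mathcal{P}_4,\mathcal{P}_5$ to the pressure-work cross terms is exactly what happens in the paper. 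The genuine difference is the middle step: you invoke the abstract identity $\partial_t\eta(U|\bar U)=[\nabla\eta(U)-\nabla\eta(\bar U)]\cdot U_t-\nabla^2\eta(\bar U)\bar U_t\cdot(U-\bar U)$ in conservative variables, whereas the paper never touches $\nabla^2\eta$: it uses the factorization $\bar\theta\,\eta(U|\bar U)=\rho\big[R\bar\theta\Phi(\bar\rho/\rho)+\tfrac{R}{\gamma-1}\bar\theta\Phi(\theta/\bar\theta)+\tfrac12(u-\bar u)^2\big]$ together with the mass equation to reduce everything to material derivatives $(\partial_t+u\partial_x)$ of $\Phi(\bar\rho/\rho)$, $\Phi(\theta/\bar\theta)$ and $(u-\bar u)^2/2$, which are then evaluated from the primitive-variable perturbation system \eqref{eq:pertubation1}. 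Both routes are legitimate; the paper's avoids Hessian bookkeeping and makes the precise $\rho$ versus $\bar\rho$ placements in $\mathbf{Y}$ and $\mathcal{J}^{bad}$ come out automatically, while yours is the more standard abstract formulation and would require a final rewriting in primitive variables to match the stated formulas.

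One caution: your claim that the $a_t$- and $\bar\theta_t$-contributions alone produce ``the first two summands of $\mathbf{Y}$'' is not literally correct under your splitting. In your decomposition $\bar\theta$ acts as an external weight on $\eta(U|\bar U)$, whose kinetic part is $\rho(u-\bar u)^2/(2\bar\theta)$; hence the $\bar\theta_t$-contribution also generates a term proportional to $\int_{\mathbb{R}_+} a\,\rho\,\bar\theta_x\,(u-\bar u)^2/(2\bar\theta)\,dx$, which does not appear in \eqref{term:Y} and must be cancelled against pieces arising from the weighted Hessian term. In the paper this issue never arises, because in the explicit product form $\bar\theta$ multiplies only the $\Phi$-terms, so its material derivative is extracted against $R\Phi(\bar\rho/\rho)+\tfrac{R}{\gamma-1}\Phi(\theta/\bar\theta)$ alone. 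This is a bookkeeping point rather than a conceptual gap, but it is precisely where a careless execution of your plan would fail to reproduce the stated $\mathbf{Y}$, $\mathcal{J}^{bad}$, $\mathcal{J}^{good}$ and $\mathcal{P}$.
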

\begin{remark}
	From Remark \ref{rmk:VS} and the smallness of $\e$, it follows that $\sigma - u > 0$. Thus, by the definition of the weight function $a$ in \eqref{def_a}, we have $a_x (\sigma - u) >0$. Consequently, the term $\mathcal{J}^{good}(U)$ consists of good terms.
\end{remark}
\begin{remark}
	The terms in $\mathcal{P}$ arise from the boundary values when performing integration by parts. For the outflow problem, the term $\mathcal{P}_1$ is a good term due to the condition $u_- <0$. On the other hand, $\mathcal{P}_1$ vanishes in the impermeable setting.
\end{remark}
\begin{proof}
	Before proving Lemma \ref{lma:BRE}, note that the shifted viscous shock $\bar{U} = (\rhotil, \bar{m}, \bar{E})^t$ satisfies the following equations:
	\begin{equation} \label{eq:NSFs}
	\begin{cases}
		& 	\bar{\rho}_t +(\bar{\rho}\bar{u}  )_x=-\dot{\mathbf{X}}(t)	\bar{\rho}_x , \\
		& 	\bar{u}_t+ 	\bar{u}\bar{u}_x  + \frac{	\bar{p}_x }{\bar{\rho}}=\mu \frac{	\bar{u}_{xx}  }{\bar{\rho} }- \dot{\mathbf{X}}(t)	\bar{u} _x,\\
		& 	\frac{R}{\gamma-1}\bar{\theta} _t + \frac{R}{\gamma-1} \bar{u}\bar{\theta}_x + \frac{\bar{p}}{\bar{\rho}}\bar{u}_x  = \frac{\kappa}{\bar{\rho}} \bar{\theta}_{xx}+ \frac{\mu}{\bar{\rho}}( \bar{u}_x)^2 -\frac{R}{\gamma-1}\dot{\mathbf{X}}(t)\bar{\theta}_x.\\
	\end{cases}
\end{equation}
Combining \eqref{eq:NSFS} and \eqref{eq:NSFs}, we derive the equations for the perturbations:
	\begin{equation} \label{eq:pertubation1}
		\begin{cases}
			& (\rho-\bar{\rho})_t+(\rho u-\bar{\rho}\bar{u})_x=\dot{\mathbf{X}}(t)\bar{\rho}_x, \\
			& (u-\bar{u})_t+(uu_x-\bar{u}\bar{u}_x)+\left(\frac{p_x}{\rho}-\frac{\bar{p}_x}{\bar{\rho}}\right)=\mu\left(\frac{u_{xx}}{\rho}-\frac{\bar{u}_{xx}}{\bar{\rho}}\right)+\dot{\mathbf{X}}(t)\bar{u}_x,\\
			& (\theta-\bar{\theta})_t+(u\theta_x-\bar{u}\bar{\theta}_x)=\frac{\gamma-1}{R}\left[-\left(\frac{p}{\rho}u_x-\frac{\bar{p}}{\bar{\rho}}\bar{u}_x\right)+\kappa\left(\frac{\theta_{xx}}{\rho}-\frac{
				\bar{\theta}_{xx}
			}{\bar{\rho}}\right)+\mu\left(\frac{u_x^2}{\rho}-\frac{\bar{u}_x^2}{\bar{\rho}}\right)\right]+\dot{\mathbf{X}}(t)\bar{\theta}_x.
		\end{cases}
	\end{equation}
	We now prove Lemma \ref{lma:BRE}. Observe that
\begin{align*}
	& \frac{d}{dt}\int_{\mathbb{R}_+} a\bar{\theta}\eta\left(U | \Bar{U}\right) \,dx   \\
	&\quad=-(\dot{\mathbf{X}}(t)+\sigma)\int_{\mathbb{R}_+}a'\bar{\theta}\eta\left(U | \Bar{U}\right) \,dx +\int_{\mathbb{R}_+} a \rho_t\left[R\bar{\theta}\Phi\left(\frac{\bar{\rho}}{\rho}\right)+\frac{R}{\gamma-1}\bar{\theta}\Phi\left(\frac{\theta}{\bar{\theta}}\right)+\frac{|u-\bar{u}|^2}{2}\right]\,dx  \\
	&\qquad +\int_{\mathbb{R}_+}a \rho \partial_t \left[R\bar{\theta}\Phi\left(\frac{\bar{\rho}}{\rho}\right)+\frac{R}{\gamma-1}\bar{\theta}\Phi\left(\frac{\theta}{\bar{\theta}}\right)+\frac{|u-\bar{u}|^2}{2}\right]\,dx\\
	&\quad=-(\dot{\mathbf{X}}(t)+\sigma)\int_{\mathbb{R}_+}a'\bar{\theta}\eta\left(U|\Bar{U}\right)\,dx -\int_{\mathbb{R}_+}a(\rho u)_x \left[R\bar{\theta}\Phi\left(\frac{\bar{\rho}}{\rho}\right)+\frac{R}{\gamma-1}\bar{\theta}\Phi\left(\frac{\theta}{\bar{\theta}}\right)+\frac{|u-\bar{u}|^2}{2}\right]\,dx\\
	&\qquad+\int_{\mathbb{R}_+}a \rho \partial_{t}\left[R\bar{\theta}\Phi\left(\frac{\bar{\rho}}{\rho}\right)+\frac{R}{\gamma-1}\bar{\theta}\Phi\left(\frac{\theta}{\bar{\theta}}\right)+\frac{|u-\bar{u}|^2}{2}\right]\, dx.
\end{align*}
Then integration by parts yields
\begin{align}\label{eq:REalculation}
	\begin{aligned}
	& \frac{d}{dt}\int_{\mathbb{R}_+} a\bar{\theta}\eta\left(U | \Bar{U}\right) \,dx   \\
	&\quad= - \int_{\mathbb{R}_+}a' (\sigma - u)\bar{\theta}\eta\left(U|\Bar{U}\right)\,dx -\dot{\mathbf{X}}(t) \int_{\mathbb{R}_+}a'\bar{\theta}\eta\left(U|\Bar{U}\right)\,dx  + a u \bar{\theta}\eta\left(U | \Bar{U}\right)\bigg|_{x=0} \\
	&\qquad+\int_{\mathbb{R}_+}a \rho (\partial_{t} + u\partial_x)\left[R\bar{\theta}\Phi\left(\frac{\bar{\rho}}{\rho}\right)+\frac{R}{\gamma-1}\bar{\theta}\Phi\left(\frac{\theta}{\bar{\theta}}\right)+\frac{|u-\bar{u}|^2}{2}\right]\,dx \\
	&\quad= - \int_{\mathbb{R}_+}a'(\sigma - u)\bar{\theta}\eta\left(U|\Bar{U}\right)\,dx -\dot{\mathbf{X}}(t) \int_{\mathbb{R}_+}a'\bar{\theta}\eta\left(U|\Bar{U}\right)\,dx  + a u \bar{\theta}\eta\left(U | \Bar{U}\right)\bigg|_{x=0} \\
	&\qquad+ \int_{\mathbb{R}_+}a \rho \Big((\partial_{t} + u\partial_x) \bar{\theta} \Big)\left[R\Phi\left(\frac{\bar{\rho}}{\rho}\right)+\frac{R}{\gamma-1} \Phi\left(\frac{\theta}{\bar{\theta}}\right)\right]\,dx   \\
	&\qquad + \int_{\mathbb{R}_+}\underbrace{a \rho \left(\bar{\theta}
	(\partial_{t} + u\partial_x)\left[R\Phi\left(\frac{\bar{\rho}}{\rho}\right)+\frac{R}{\gamma-1}\Phi\left(\frac{\theta}{\bar{\theta}}\right)\right] +(\rd_t + u\rd_x)\left[\frac{|u-\bar{u}|^2}{2}\right]\right)}_{=:\mathcal{J}} \,dx.
	\end{aligned}
\end{align}
Using the relation 
\begin{align*}
	(\partial_{t} + u\partial_x) \bar{\theta}= \bar{\theta}_x(-\sigma - \dot{\mathbf{X}}(t)) + u\bar{\theta}_x = (u-\sigma)\bar{\theta}_x - \dot{\mathbf{X}}(t)\bar{\theta}_x,
\end{align*}
we have
\begin{align*}
	&\int_{\mathbb{R}_+}a \rho \Big((\partial_{t} + u\partial_x) \bar{\theta}\Big) \left[R\Phi\left(\frac{\bar{\rho}}{\rho}\right)+\frac{R}{\gamma-1} \Phi\left(\frac{\theta}{\bar{\theta}}\right)\right]\,dx \\
	&\, = \int_{\mathbb{R}_+}a \rho \big(u-\sigma\big)\bar{\theta}_x\left[R\Phi\left(\frac{\bar{\rho}}{\rho}\right)+\frac{R}{\gamma-1} \Phi\left(\frac{\theta}{\bar{\theta}}\right)\right]\,dx -\dot{\mathbf{X}}(t)\int_{\mathbb{R}_+}a \rho\bar{\theta}_x \left[R\Phi\left(\frac{\bar{\rho}}{\rho}\right)+\frac{R}{\gamma-1} \Phi\left(\frac{\theta}{\bar{\theta}}\right)\right]\,dx.
\end{align*} 
Now, we split $\mathcal{J}$ into three terms as follows: 
\begin{align*}
	\mathcal{J} &=R a \rho \bar{\theta}(\partial_{t} + u\partial_x)\Phi\left(\frac{\bar{\rho}}{\rho}\right) + \frac{R}{\gamma-1}  a \rho \bar{\theta}(\partial_{t} + u\partial_x)\Phi\left(\frac{\theta}{\bar{\theta}}\right)+ a \rho (\partial_{t} + u\partial_x)\frac{(u-\bar{u})^2}{2} \\
	&=:\mathcal{J}_1 + \mathcal{J}_2 + \mathcal{J}_3.
\end{align*}
To estimate $\mathcal{J}_{1}$, using $\Phi'(z)=1-\frac{1}{z}$, \eqref{eq:NSF}$_1$, \eqref{eq:NSFs}$_1$, and the chain-rule, observe that
\begin{align*}
	(\rd_t + u\rd_x)\Phi\left(\frac{\bar{\rho}}{\rho}\right) &= \Phi'\left(\frac{\bar{\rho}}{\rho}\right) \left[(\rd_t + u\rd_x)\left(\frac{\bar{\rho}}{\rho}\right) \right] =\frac{\rhotil - \r}{\rhotil} \frac{\r (\rd_t + u\rd_x)\rhotil  - \rhotil (\rd_t + u\rd_x)\r}{\r^2}\\
	&=\frac{\rhotil-\rho}{\bar{\rho}} \frac{\rho\left(-(\bar{\rho}\bar{u})_x -\dot{\mathbf{X}}(t)\bar{\rho}_x + u\bar{\rho}_x \right) - \bar{\rho}(-\rho u_x)}{\rho^2}  \\
	&= \frac{\rhotil-\rho}{\bar{\rho}} \frac{\bar{\rho}(u-\bar{u})_x +  \bar{\rho}_x(u-\bar{u})- \dot{\mathbf{X}}(t)\bar{\rho}_x}{\rho}.
\end{align*}
This implies
\begin{equation}\label{eq:j1}
	\begin{aligned}
		\mathcal{J}_1&=- R a\bar{\theta} (\rho -\bar{\rho})(u-\bar{u})_x - Ra\bar{\theta} \frac{\bar{\rho}_x}{\bar{\rho}}(\rho -\bar{\rho }) (u-\bar{u}) + \dot{\mathbf{X}}(t) Ra\bar{\theta}  \bar{\rho}_x  \frac{\rho-\rhotil}{\bar{\rho}} \\
		&= \underbrace{-Ra\bar{\theta} \left((\rho- \bar{\rho})(u-\bar{u})\right)_x}_{=:I_1} +\underbrace{ Ra\bar{\theta}\rho_x (u-\bar{u}) - Ra\bar{\theta} \frac{\rho}{\bar{\rho}} \bar{\rho}_x(u- \bar{u})}_{=:I_2} +\dot{\mathbf{X}}(t) Ra\bar{\theta}  \bar{\rho}_x  \frac{\rho-\rhotil}{\bar{\rho}}.
	\end{aligned}
\end{equation}
Likewise, since
\begin{align*}
	(\partial_{t} + u\partial_x)\Phi\left(\frac{\theta}{\bar{\theta}}\right) &=  \frac{\theta- \bar{\theta}}{\theta} \frac{\bar{\theta}(\theta_t + u\theta_x)- \theta (\bar{\theta}_t + u \bar{\theta}_x)}{\bar{\theta}^2} \\
	&= \frac{\theta- \bar{\theta}}{\theta} \left[\frac{1}{\bar{\theta}}(\theta-\bar{\theta})_t - \frac{\theta- \bar{\theta}}{\bar{\theta}^2} \bar{\theta}_t + \frac{u \theta_x}{\bar{\theta}} - \frac{u \theta \bar{\theta}_x}{\bar{\theta}^2}\right],
\end{align*}
we obtain
\begin{align}\label{eq:j22}
	\begin{aligned}
		\mathcal{J}_{2} &= \frac{R}{\gamma-1} a \rho \frac{\theta-\bar{\theta}}{\theta } (\theta- \bar{\theta})_t -  \frac{R}{\gamma-1} a \rho \frac{(\theta- \bar{\theta})^2}{\theta \bar{\theta}} \bar{\theta}_t +  \frac{R}{\gamma-1} a \rho \frac{\theta- \bar\theta}{\theta} \left(u(\theta-\bar{\theta})_x-u \frac{\theta -\bar{\theta}}{\bar{\theta}} \bar{\theta}_x\right) \\
		&=: \mathcal{J}_{21} +  \mathcal{J}_{22} + \frac{R}{\gamma-1} a \rho u \frac{\theta- \bar\theta}{\theta} (\theta-\bar{\theta})_x - \frac{R}{\gamma-1} a \rho u \frac{(\theta - \thtil)^2}{\theta \bar{\theta}} \bar{\theta}_x.
	\end{aligned}
\end{align}
For $\mathcal{J}_{21}$ and $\mathcal{J}_{22}$, we use $\eqref{eq:pertubation1}_3$ and \eqref{eq:NSFs}$_3$ to have
\begin{align}\label{eq:j221}
	\begin{aligned}
			\mathcal{J}_{21} 
		&= a \rho\frac{\theta-\bar{\theta}}{\theta } \bigg[- \frac{R}{\gamma-1} (u\theta_x -\bar{u}\bar{\theta}_x) + \left(-\frac{p}{\rho}u_x + \frac{\bar{p}}{\bar{\rho}}\bar{u}_x + \frac{\kappa}{\rho}(\theta-\bar{\theta})_{xx} + \kappa (\frac{1}{\rho} -\frac{1}{\bar{\rho}})\bar{\theta}_{xx} + \frac{\mu}{\rho}u^2_x -\frac{\mu}{\bar{\rho}}\bar{u}^2_x\right)  \\
		&\phantom{.= a \rho\frac{\theta-\bar{\theta}}{\theta } \bigg[} + \frac{R}{\gamma-1}\dot{\mathbf{X}}(t) \bar{\theta}_x \bigg],
	\end{aligned}
\end{align}
and
\begin{align}\label{eq:j222}
	\begin{aligned}
			\mathcal{J}_{22}
		= a \rho \frac{(\theta -\bar{\theta})^2}{\theta \bar{\theta}} \left[\frac{R}{\gamma-1} \bar{u}\bar{\theta}_x + \frac{\bar{p}}{\bar{\rho}}\bar{u}_x - \frac{\kappa}{\bar{\rho}} \bar{\theta}_{xx} - \frac{\mu}{\bar{\rho}}(\bar{u}_x)^2 + \frac{R}{\gamma -1}\dot{\mathbf{X}}(t) \bar{\theta}_x\right].
	\end{aligned}
\end{align}
Substituting \eqref{eq:j221} and \eqref{eq:j222} into \eqref{eq:j22}, we have 
\begin{align}\label{eq:j2}
	\begin{aligned}
		\mathcal{J}_2
		&= -\frac{R}{\gamma -1} a \rho \frac{\theta -\bar{\theta}}{\theta} (u-\bar{u})\bar{\theta}_x \underbrace{ - Ra \rho (\theta -\bar{\theta})(u-\bar{u})_x}_{=:I_3}   \\
		&\quad + \left[\kappa \frac{a}{\theta}(\theta - \bar{\theta})(\theta - \bar{\theta})_x \right]_x - \kappa \frac{a}{\theta} |(\theta -\bar{\theta})_x|^2- \kappa \left(\frac{a}{\theta}\right)_x (\theta - \bar{\theta})(\theta - \bar{\theta})_x  + \kappa a \rho \frac{\theta -\bar{\theta}}{\theta}\left(\frac{1}{\rho} - \frac{1}{\bar{\rho}}  \right)\bar{\theta}_{xx}  \\
		&\quad - \frac{R}{\gamma-1} a \rho (u-\bar{u}) \frac{(\bar{\theta} - \theta)^2}{\theta \bar{\theta}} \bar{\theta}_x   + a\rho \frac{\theta- \bar{\theta}}{\theta}\bigg[ \frac{\mu}{\rho }(u_x^2 - \bar{u}_x^2)+ \mu \bar{u}^2_x \left(\frac{1}{\rho} -\frac{1}{\bar{\rho}}\right)\bigg]  \\
		&\quad-\kappa a\frac{\r}{\rhotil} \frac{(\th - \thtil)^2}{\th \thtil}\thtil_{xx} - \mu a\frac{\r}{\rhotil}\frac{(\th - \thtil)^2}{\th \thtil}\util_x^2 + \frac{R}{\gamma -1} a\rho \frac{\theta -\bar{\theta}}{\bar{\theta}} \bar{\theta}_x \dot{\mathbf{X}}(t).
	\end{aligned}
\end{align}
For $\mathcal{J}_3$, we use $\eqref{eq:pertubation1}_2$ to have
\begin{align*}
	(\rd_t + u\rd_x)\frac{(u-\util)^2}{2} &= (u-\util)\Big((\rd_t + u\rd_x)(u-\util)\Big)\\
	&= (u-\bar{u}) \bigg[-(u-\bar{u})\bar{u}_x +  \dot{\mathbf{X}}(t) \bar{u}_x + \frac{\mu}{\rho} (u-\bar{u})_{xx} + \mu (\frac{1}{\rho} - \frac{1}{\bar{\rho}})\bar{u}_{xx} \\
	&\phantom{= (u-\bar{u}) \bigg[} - R\big(\frac{\rho_x\theta}{\rho} - \frac{\bar{\rho}_x \bar{\theta}}{\bar{\rho}}\big) - R(\theta -\bar{\theta})_x\bigg].
\end{align*}
Thus, we have
\begin{align}\label{eq:j3}
	\begin{aligned}
		\mathcal{J}_3 &=  -a\rho \bar{u}_x (u-\bar{u})^2 \underbrace{- R a\rho (u-\bar{u})\left(\frac{\rho_x\theta}{\rho} - \frac{\bar{\rho}_x \bar{\theta}}{\bar{\rho}}\right)}_{=:I_4} \underbrace{- Ra\rho (u-\bar{u})(\theta- \bar{\theta})_x}_{=:I_5}  \\
		&\quad  +\left[ \mu a(u-\bar{u})(u-\bar{u})_{x}\right]_x - \mu a|(u-\bar{u})_x|^2 - \mu a' (u-\bar{u})(u-\bar{u})_x   \\
		&\quad + \mu a \rho (u-\bar{u})\left(\frac{1}{\rho} -\frac{1}{\bar{\rho}} \right)\bar{u}_{xx} + \dot{\mathbf{X}}(t) a\rho \bar{u}_x(u-\bar{u}) .
	\end{aligned}
\end{align}
Now, observe that 
\[
\int_{\RR_+} I_1\,dx = \int_{\RR_+} Ra_x \thtil (\r - \rhotil)(u-\util)\,dx + \int_{\RR_+} R a \thtil_x (\r - \rhotil)(u-\util)\,dx + R a \thtil (\r-\rhotil)(u-\util)\Big|_{x=0}.
\]
On the other hand, we have
\[
\int_{\RR_+} \big(I_2 + I_4\big) \,dx = -\int_{\RR_+} R a \r_x(u-\util) (\th - \thtil)\,dx.
\]
This implies 
\begin{align*}
	\int_{\RR_+} \big(I_2 + I_3 + I_4 + I_5\big)\,dx &= -\int_{\RR_+} R a \Big[\r_x(u-\util)(\th- \thtil) + \r(u-\util)_x(\th- \thtil)+ \r(u-\util)(\th- \thtil)_x\Big]\,dx\\
	&=\int_{\RR_+} R a_x \r(u-\util)(\th- \thtil) + R a \r(u-\util)(\th- \thtil)\Big|_{x=0}.
\end{align*}
Thus, we have
\begin{equation}\label{eq:I1-5}
\begin{aligned}
	\sum_{i=1}^5 \int_{\RR_+} I_i \,dx &= \int_{\RR_+} Ra_x (u-\util)\big(\thtil (\r - \rhotil) + \r (\th - \thtil)\big)\,dx + \int_{\RR_+} R a \thtil_x (\r - \rhotil)(u-\util)\,dx +\mathcal{P}_4 + \mathcal{P}_5\\
	&=  \int_{\RR_+} a_x (u-\util)(p-\bar{p})\,dx + \int_{\RR_+} R a \thtil_x (\r - \rhotil)(u-\util)\,dx +\mathcal{P}_4 + \mathcal{P}_5.
\end{aligned}
\end{equation}
Substituting \eqref{eq:j1}, \eqref{eq:j2}, \eqref{eq:j3}, and \eqref{eq:I1-5} into \eqref{eq:REalculation}, and using integration by parts, we obtain the desired result.
\end{proof}

\subsection{Decompositions}

Here, we extract the leading-order bad terms from the first five terms of $\mathcal{J}^{bad}$ as below:
\begin{align}\label{def:B1}
	\begin{aligned}
			\mathbf{B}_1 &:= \int_{\mathbb{R}_+} a |\bar{u}_x| \bigg[\frac{R(\gamma -1) \theta_-}{2\rho_-} |\rho - \bar{\rho}|^2 +\rho_- |u-\bar{u}|^2 + \frac{R\rho_-}{2\theta_-} |\theta - \bar{\theta}|^2  + \frac{R(\gamma-1) \theta_-}{\sqrt{R\gamma \theta_-}}|\rho- \bar{\rho}||u-\bar{u}|    \\
		&\phantom{:= \int_{\mathbb{R}_+} a |\bar{u}_x| \bigg[} + \frac{R \rho_-}{\sqrt{R\gamma \theta_-}}|u-\bar{u}||\theta- \bar{\theta}|\bigg] \,dx.
	\end{aligned}
\end{align}
Since $ |\rho -\rho_- | \leq |\rho -\bar{\rho}| + |\bar{\rho} - \rho_-| \leq C(\varepsilon +  \delta)$, we have 
\begin{align*}
	\begin{aligned}
		& 	-a \rho \bar{u}_x (u-\bar{u})^2 \leq a \rho_- |\bar{u}_x| |u-\bar{u}|^2 + C (\delta + \epsilon) |\bar{u}_x| |u-\bar{u}|^2 .
	\end{aligned}
\end{align*}
Using \eqref{estderi}, we obtain
\begin{align*}
	\begin{aligned}
	\big| R a\bar{\theta}_x (\rho -\bar{\rho})(u-\bar{u}) \big|
	\leq  \frac{R(\gamma -1) \theta_-}{\sqrt{\gamma R \theta_-}} a |\bar{u}_x| |\rho -\bar{\rho}| |u- \bar{u}| + C\delta |\bar{u}_x| \left(|\rho -\bar{\rho}|^2 + |u-\bar{u}|^2\right).
	\end{aligned}
\end{align*}
Similarly, using \eqref{estderi} and $\left|\frac{1}{\thtil} - \frac{1}{\th_-}\right| \leq C(\e + \d)$, we have 
\begin{align*}
	\begin{aligned}
		\left| \frac{R}{\gamma- 1} a\rho \bar{\theta}_x (u-\bar{u}) \frac{\theta -\bar{\theta}}{\theta } \right| \leq  \frac{R\rho_-}{\sqrt{R \gamma \theta_-}} a |\bar{u}_x| |u-\bar{u}| |\theta -\bar{\theta}| + C(\e + \delta)|\bar{u}_x|\left(|u-\bar{u}|^2 + |\theta -\bar{\theta}|^2\right).
	\end{aligned}
\end{align*}
Now, we use  
\begin{equation}\label{approx:u-simga}
	|(u-\sigma) - (u_-- \sigma_-)| \leq |u- u_-| + |\sigma -\sigma_-| \leq (\varepsilon + \delta),
\end{equation}
and Taylor expansion of $\Phi(z) = z -1 - \ln z$ at $z=1$:
\[
\Big|\Phi(z) - \frac{1}{2}(z-1)^2\Big| \leq C|z-1|^3 \quad \text{for} \quad |z-1| \leq \frac{1}{2},
\]
to have
\begin{align}\label{eq:phi}
	\left|\Phi \left(\frac{\bar{\rho}}{\rho}\right) - \frac{(\rho -\bar{\rho})^2}{2\rho^2}\right| \leq C|\rho -\bar{\rho}|^3, \quad \text{and} \quad \left|\Phi\left(\frac{\th}{\thtil}\right) - \frac{(\th - \thtil)^2}{2\thtil^2}\right| \leq C|\th - \thtil|^3.
\end{align}
Thus, we obtain
\begin{align*}
	\begin{aligned}
	\left|R a\rho \bar{\theta} (u-\sigma)\bar{\theta}_x  \Phi \left(\frac{\bar{\rho}}{\rho}\right)\right| \leq  \frac{R(\gamma -1) \theta_-}{2\rho_-} a|\bar{u}_x||\rho - \bar{\rho}|^2 + C ( \varepsilon + \delta) |\bar{u}_x| |\rho -\bar{\rho}|^2,
	\end{aligned}
\end{align*}
and 
\begin{align*}
	\begin{aligned}
		\left|\frac{R}{\gamma -1}a \rho (u-\sigma) \bar{\theta}_x \Phi\left(\frac{\th}{\thtil}\right) \right| 
		\leq  \frac{R\rho_-}{2\theta_-}a |\bar{u}_x| |\theta -\bar{\theta}|^2 + C(\varepsilon + \delta)  |\bar{u}_x| |\theta -\bar{\theta}|^2.
	\end{aligned}
\end{align*}
Combining the above estimates, we have
\begin{align*}
	\begin{aligned}
		& \int_{\mathbb{R}_+} a\bigg[ -\rho \bar{u}_x (u-\bar{u})^2 + R \bar{\theta}_x (\rho -\bar{\rho})(u-\bar{u}) \\
		 &\phantom{\int_{\mathbb{R}_+} a\bigg[} + \rho (u-\sigma)\bar{\theta}_x \bigg(R\bar{\theta} \Phi\left(\frac{\bar{\rho}}{\rho}\right) + \frac{R}{\gamma-1}\bar{\theta} \Phi\left(\frac{\theta}{\bar{\theta}}\right)\bigg) -\frac{R}{\gamma- 1} \rho \bar{\theta}_x (u-\bar{u}) \frac{\theta -\bar{\theta}}{\theta }\bigg] \,dx \\
		&\quad \leq \mathbf{B}_1 + C(\delta + \varepsilon) \int_{\mathbb{R}_+} |\bar{u}_x| \left|(\rho -\bar{\rho}, u-\bar{u}, \theta -\bar{\theta})\right|^2 \,dx.
	\end{aligned}
\end{align*}
Therefore, it follows from Lemma \ref{lma:BRE} that
\begin{align}\label{est:REPri}
	\begin{aligned}
	\frac{d}{dt} \int_{\mathbb{R}_+} a(t,x) \bar{\theta} (t,x)\eta(U(t,x)|\bar{U}(t,x)) dx  \leq \dot{\mathbf{X}}(t) \mathbf{Y}(U) + \sum_{i=1}^{5} \mathbf{B}_i  - \mathbf{G}(U) - \mathbf{D}(U) + \mathcal{P}(U),
	\end{aligned}
\end{align}
where $\mathbf{X}, \mathbf{Y}, \mathcal{P}$ are defined in \eqref{def_shift}, \eqref{term:Y}, and \eqref{term:Jgood}, respectively, $\mathbf{B}_1$ is as in \eqref{def:B1}, and 
\begin{align*}
	\begin{aligned}
		\mathbf{B}_2 &:= \int_{\mathbb{R}_+} a_x (u-\bar{u})(p-\bar{p}) \,dx, \quad \mathbf{B}_3 := -\int_{\mathbb{R}_+} a_x\left[\mu (u-\bar{u})(u-\bar{u})_x + \kappa \frac{\theta -\bar{\theta}}{\theta} (\theta -\bar{\theta})_x\right] \,dx, \\
		\mathbf{B}_4 &:= \int_{\mathbb{R}_+} a\bigg[-\frac{R}{\gamma-1} \rho \bar{\theta}_x (u-\bar{u}) \frac{(\theta- \bar{\theta})^2}{\theta \bar{\theta}} + \mu \rho \bar{u}_{xx} (u-\bar{u}) (\frac{1}{\rho} - \frac{1}{\bar{\rho}}) + \mu \frac{\theta -\bar{\theta}}{\theta} (u^2_x - \bar{u}^2_x )   \\
		&\phantom{:=-\int_{\mathbb{R}_+} a\bigg[} + \mu \rho \bar{u}^2_x \frac{\theta - \bar{\theta}}{\theta} (\frac{1}{\rho} - \frac{1}{\bar{\rho}}) - \kappa \frac{\rho}{\bar{\rho}} \frac{(\theta -\bar{\theta})^2}{\theta \bar{\theta}} \bar{\theta}_{xx}  - \mu  \frac{\rho}{\bar{\rho}}  \frac{(\theta -\bar{\theta})^2}{\theta \bar{\theta}} \bar{u}_x^2 + \kappa \rho \frac{\theta -\bar{\theta}}{\theta} (\frac{1}{\rho } -\frac{1}{\bar{\rho}}) \bar{\theta}_{xx}   \\
		&\phantom{:=-\int_{\mathbb{R}_+} a\bigg[} + \kappa \frac{\theta_x}{\theta^2} (\theta -\bar{\theta}) (\theta -\bar{\theta})_x\bigg] \,dx, \\
		\mathbf{B}_5 &:= C(\delta + \e) \int_{\mathbb{R}_+} |\bar{u}_x| \left|(\rho- \bar{\rho}, u-\bar{u}, \theta -\bar{\theta})\right|^2 \,dx,
	\end{aligned}
\end{align*}
and
\begin{equation}\label{termG}
	\begin{aligned}
	\mathbf{G}(U) &:= \int_{\mathbb{R}_+} (\sigma -u) a_x \bar{\theta} \eta (U|\bar{U})  \,dx,\\
	\mathbf{D}(U) &:= \int_{\mathbb{R}_+} a \left(\mu |(u-\bar{u})_x|^2 + \frac{\kappa}{\theta}|(\theta -\bar{\theta})_x|^2 \right) \,dx = \mathbf{D}_{u_1}(U) + \mathbf{D}_{\theta_1}(U).
\end{aligned} 
\end{equation}
Now, we decompose the functional $\mathbf{Y}$ as:
\[\mathbf{Y}(U):=\sum\limits_{i=1}^6\mathbf{Y}_i(U),\]
where
\begin{align*}
\begin{split}
	\begin{aligned}
		&\mathbf{Y}_1(U):=\int_{\mathbb{R}_+}a\rho(u-\bar{u})\bar{u}_x\,dx,
		&&\mathbf{Y}_2(U):=R\int_{\mathbb{R}_+}a\frac{\bar{\theta}}{\bar{\rho}}(\rho-\bar{\rho})\bar{\rho}_x\,dx,\\
		&\mathbf{Y}_3(U):=\frac{R}{\gamma-1}\int_{\mathbb{R}_+}a\frac{\rho}{\bar{\theta}}(\theta-\bar{\theta})\bar{\theta}_x\,dx,
		&&\mathbf{Y}_4(U):=-R\int_{\mathbb{R}_+}a\rho\bar{\theta}\Phi\left(\frac{\bar{\rho}}{\rho}\right)\bar{\theta}_x\,dx,\\
		&\mathbf{Y}_5(U):=-\frac{R}{\gamma-1}\int_{\mathbb{R}_+}a\rho\bar{\theta} \Phi\left(\frac{\theta}{\bar{\theta}}\right)\bar{\theta}_x\,dx,
		&&\mathbf{Y}_6(U):=-\int_{\mathbb{R}_+}a_x\bar{\theta}\eta\left(U|\bar{U}\right)\,dx.
	\end{aligned}
\end{split}
\end{align*}
Note from \eqref{def_shift} that
\begin{equation} \label{XY123}
	\dot{\mathbf{X}}(t)=-\frac{M}{\delta}\left(\mathbf{Y}_1+\mathbf{Y}_2+\mathbf{Y}_3\right).
\end{equation}
This implies
\begin{equation}\label{eq:XY}
	\dot{\mathbf{X}}\mathbf{Y}=-\frac{\delta}{M}|\dot{\mathbf{X}}|^2+\dot{\mathbf{X}}\sum\limits_{i=4}^6\mathbf{Y}_i.    
\end{equation}

\subsection{Estimates of leading-order terms}\label{sec:lot}
\begin{lemma}\label{lma:leading}
	Under the assumption of Proposition \ref{prop:main} (or Proposition \ref{prop:main2}), there exists $C_*>0$ such that 
\begin{align*}
	-\frac{\delta}{2M}|\dot{\mathbf{X}}|^2+\mathbf{B}_1+\mathbf{B}_2-\mathbf{G}-\frac{7}{8}\mathbf{D}
	\le  -\frac{1}{4}(\mathbf{G}_1 + \mathbf{G}_2) - C_*\mathbf{G}^S,
\end{align*}
where \[\mathbf{G}^S := \int_{\mathbb{R}_+} |\bar{u}_x| \left|\left(\rho -\bar{\rho},  u-\bar{u}, \theta -\bar{\theta}\right)\right|^2 \,dx.\]
\end{lemma}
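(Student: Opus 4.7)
The proof is a quadratic-form analysis localized on the support of $|\bar{u}_x|$, combined with the algebraic structure of the Rankine--Hugoniot relations for 3-shocks. I plan to proceed in three stages.

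First, I handle the $\dot{\mathbf{X}}$-contributions. From \eqref{eq:XY}, one has $\dot{\mathbf{X}}\mathbf{Y} = -\frac{\delta}{M}|\dot{\mathbf{X}}|^2 + \dot{\mathbf{X}}\sum_{i=4}^6 \mathbf{Y}_i$. Applying Young's inequality $|\dot{\mathbf{X}}\mathbf{Y}_i| \le \frac{\delta}{6M}|\dot{\mathbf{X}}|^2 + \frac{CM}{\delta}|\mathbf{Y}_i|^2$ preserves $-\frac{\delta}{2M}|\dot{\mathbf{X}}|^2$ as usable negative budget. The squared factors $|\mathbf{Y}_4|^2, |\mathbf{Y}_5|^2$ carry $\bar{\theta}_x^2\sim \delta^2|\bar{u}_x|$ combined with a quartic power of the perturbation (by \eqref{eq:phi}), while $|\mathbf{Y}_6|^2$ carries $a_x^2 \sim |\bar{u}_x|^2/\delta$ with a quartic perturbation factor. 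By \eqref{apriori_small} and the Sobolev embedding $H^1(\mathbb{R}_+)\hookrightarrow L^\infty$, each is bounded by a small multiple of $(\delta+\varepsilon)(\mathbf{G}_1+\mathbf{G}_2+\mathbf{G}^S)$, hence absorbable into the right-hand side.

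Second, I reduce $\mathbf{B}_1, \mathbf{B}_2$, and $-\mathbf{G}$ to a single pointwise quadratic form. With $W := (\rho-\bar{\rho}, u-\bar{u}, \theta-\bar{\theta})$, using $a_x = -\bar{u}_x/\sqrt{\delta}$, the expansion $p-\bar{p} = R\theta_-(\rho-\bar{\rho}) + R\rho_-(\theta-\bar{\theta}) + O(\delta+\varepsilon)|W|$, the Taylor formula \eqref{eq:phi} for $\Phi$, and Remark \ref{rmk:VS} to replace $\sigma - u$ by $\sqrt{\gamma R\theta_-}$, both $\mathbf{B}_2$ and $-\mathbf{G}$ rewrite as $\int \frac{|\bar{u}_x|}{\sqrt{\delta}} q_i(W)\,dx + O(\delta+\varepsilon)\mathbf{G}^S$ with constant-coefficient quadratic forms $q_i$ depending only on $\rho_-, \theta_-, R, \gamma$. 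The term $\mathbf{B}_1$ sits at the smaller scale $|\bar{u}_x|$, so $\mathbf{B}_1 \le C\sqrt{\delta} \int \frac{|\bar{u}_x|}{\sqrt{\delta}} |W|^2\,dx$ and is absorbable after the smallness of $\delta$ is used.

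Third, I analyze the resulting form. By Rankine--Hugoniot, the 3-shock tangent vector at $(\rho_-, u_-, \theta_-)$ is a scalar multiple of some $r_S \in \mathbb{R}^3$, and the shift in \eqref{def_shift} (involving the sum $\mathbf{Y}_1+\mathbf{Y}_2+\mathbf{Y}_3$) is designed so that the leading-order form $q_2 - q_3$ vanishes along $r_S$. Decomposing $W$ against $\{r_S, e^{(1)}, e^{(2)}\}$, with $e^{(1)}=(1,-\rho_-/\sqrt{\gamma R\theta_-}, 0)^t$ and $e^{(2)}=(0,-(\gamma-1)\theta_-/\sqrt{\gamma R\theta_-}, 1)^t$ spanning $r_S^\perp$, the form is strictly negative on $\mathrm{span}\{e^{(1)}, e^{(2)}\}$ with coefficients matching, up to a factor strictly greater than $\frac{1}{4}$, those appearing in $\mathbf{G}_1$ and $\mathbf{G}_2$; the excess negativity absorbs all $O(\delta+\varepsilon)$ errors from stages 1--2, while any $r_S$-direction residual is folded into $-C_*\mathbf{G}^S$ because $|\bar{u}_x|\sim \sqrt{\delta}\,a_x$ on the shock layer.

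The main obstacle is the algebraic verification in stage 3: checking that Rankine--Hugoniot together with the specific combination $\mathbf{Y}_1+\mathbf{Y}_2+\mathbf{Y}_3$ does exactly eliminate the leading $r_S$-coefficient of $q_2-q_3$, and that the transversal coefficients produce precisely the quadratic structures of $\mathbf{G}_1, \mathbf{G}_2$ with sharp constants exceeding $\frac{1}{4}$ of those in \eqref{good_terms}. The $\frac{7}{8}\mathbf{D}$ appearing on the left of the inequality is not essential in the above scheme for the interior estimate, but is kept as a buffer to absorb residual viscous cross-terms of the form $\int a_x (u-\bar{u})(u-\bar{u})_x\,dx$ that may contaminate $\mathbf{B}_1+\mathbf{B}_2$ under integration-by-parts rearrangements.
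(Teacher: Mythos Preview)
Your scheme has a genuine gap: it misses the mechanism that produces the term $-C_*\mathbf{G}^S$. The combination $\mathbf{B}_2-\mathbf{G}$, after linearization at $(\rho_-,u_-,\theta_-)$, gives exactly $-\mathbf{G}_1-\mathbf{G}_2$ up to $O(\delta+\varepsilon)$ errors; its leading quadratic form vanishes identically in the shock-tangent direction $r_S$, so there is no negativity there to ``fold'' into $-C_*\mathbf{G}^S$. Meanwhile $\mathbf{B}_1$ is not small: it is an $O(1)$ multiple of $\int|\bar u_x||W|^2\,dx$ and, when $W$ is aligned with $r_S$, it is strictly positive while $\mathbf{G}_1=\mathbf{G}_2=0$. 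Hence your Stage~2 claim that $\mathbf{B}_1$ ``is absorbable after the smallness of $\delta$'' fails precisely on the shock-tangent mode, and nothing in your argument supplies a negative $\mathbf{G}^S$-sized term.

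In the paper's proof, the missing negativity comes from $-\tfrac{7}{8}\mathbf{D}$ and $-\tfrac{\delta}{2M}|\dot{\mathbf{X}}|^2$ together, through the Poincar\'e-type inequality (Lemma~\ref{Poincare}) and the sharp Jacobian estimate of Appendix~\ref{appjacobian}. After the change of variable $y=(u_--\bar u)/\delta$, one writes $w=u-\bar u$ and shows
\[
-\tfrac{7}{8}\mathbf{D}\le -\tfrac{7(\gamma+1)}{8}\rho_-\,\delta\int|w|^2\,dy + C\delta\Big(\int w\,dy\Big)^2 + \text{(small)},
\]
while the shift, via $\dot{\mathbf{X}}=-\tfrac{M}{\delta}(\mathbf{Y}_1+\mathbf{Y}_2+\mathbf{Y}_3)\approx 2M\rho_-\int w\,dy$, converts $-\tfrac{\delta}{2M}|\dot{\mathbf{X}}|^2$ into $-M\rho_-^2\delta(\int w\,dy)^2$; choosing $M$ kills the mean-squared term. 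Then $\mathbf{B}_1\le \tfrac{\gamma^2+5\gamma-4}{2\gamma}\rho_-\delta\int|w|^2\,dy$ combines with the above to give $\alpha_\gamma\rho_-\delta\int|w|^2\,dy$ with $\alpha_\gamma=\tfrac{\gamma^2+5\gamma-4}{2\gamma}-\tfrac{7(\gamma+1)}{8}<0$, which is exactly the source of $-C_*\mathbf{G}^S$. So $\tfrac{7}{8}\mathbf{D}$ is essential, not a buffer, and $-\tfrac{\delta}{2M}|\dot{\mathbf{X}}|^2$ must be converted into a negative \emph{mean}-squared term, not kept as a generic ``budget''. (Also, Stage~1's treatment of $\mathbf{Y}_4,\mathbf{Y}_5,\mathbf{Y}_6$ lies outside this lemma; those are handled separately in \eqref{est:Y4-6}.)
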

\begin{proof}
	For a fixed $t \in [0,T]$, we introduce a change of variable $x \mapsto y$ as 
	\begin{equation}\label{ydef}
		y:=\frac{u_--\bar{u}(x-\sigma t-\bX(t)-\beta)}{\delta}.
	\end{equation}
	Indeed, since $\util$ is decreasing, the map $x\mapsto y=y(x)$ is a one-to-one and increasing function satisfying
	\[\frac{dy}{dx} = -\frac{\bar{u}_x(x-\sigma t-\bX(t)-\beta)}{\delta}>0,\quad \lim_{x\to0}y=y_0(t),\quad \lim_{x\to+\infty}y=1,\]
	where
	\[y_0(t) := \frac{u_- -\bar{u}(-\sigma t-\bX(t)-\beta)}{\delta}>0.\]

	Recall that for the outflow problem, the assumption $(\rho_+, u_+, \theta_+) \in  \Omega_{sub}^- \cup \Gamma_{trans}^- $ together with the Lax entropy condition $\lambda_3(\rho_-,u_-,\th_-) > \sigma > \lambda_3(\rho_+,u_+,\th_+)$ implies $\sigma >0$. On the other hand, in the impermeable setting, the Rankine-Hugoniot condition \eqref{eq:RH}$_1$ immediately gives $\sigma >0$.

	Using $\sigma > 0$, \eqref{def_shift}, the \textit{a priori} assumption \eqref{apriori_small} with Sobolev inequality, and the smallness of $\varepsilon$, we have
	\[
	|\dot{\bX}(t)|\le \frac{C}{\delta} \|(\rho - \rhotil, u-\util, \theta - \thtil)\|_{L^\infty(\Rp)} \intRp |(\rhotil', \util', \thtil')|\,dx \le C\varepsilon \le \frac{\sigma}{2}, \quad t\le T,
	\]
	and so
	\[
	|{\bX}(t)| \le \frac{\sigma}{2} t, \quad t\le T,
	\]
	which yields
	\begin{equation} \label{bddx0}
		-\sigma t -\bX (t) -\beta \le - \frac{\sigma}{2} t -\beta <0, \quad t\le T.
	\end{equation}
	Using \eqref{shock_prop} and \eqref{bddx0}, we obtain
	\begin{align*} 
		\begin{aligned}
			& |\bar{u}(t,0)- u_-| \leq C\delta e^{-C\delta|-\sigma t-\mathbf{X}(t) -\beta|} \leq C\delta e^{-C\delta t} e^{-C\delta \beta} \leq C \delta e^{-C\delta \beta}.
		\end{aligned}
	\end{align*}
	Thus, we have
	\begin{equation}	\label{y0small}
		y_0(t) \le Ce^{-C\delta |\sigma t+\bX(t)+\beta|}\le Ce^{-C\delta\beta}.
	\end{equation}
	Thanks to \eqref{y0small}, by choosing $\beta$ sufficiently large, we can ensure that $y_0(t) < \frac{1}{8}$ for all $t \in [0,T]$.

	Moreover, for convenience, we introduce the notation
	\[w(y):=(u(t,\cdot)-\bar{u}(\cdot-\sigma t-\bX(t)-\beta))\circ y^{-1}.\]
	For later analysis, we observe that the weight function $a$ in \eqref{def_a} satisfies $a(t,x)=1+\sqrt{\delta} y$, and 
	\begin{align}\label{eq:ax}
		a_x=\sqrt{\delta} \frac{dy}{dx}= -\frac{1}{\sqrt{\delta}}\bar{u}_x.
	\end{align}

	\noindent $\bullet$ \textbf{Estimates on $\mathbf{B}_2 -\mathbf{G}$:} 
	First, notice that 
\begin{align*}
	\begin{aligned}
		& (u-\bar{u})(p-\bar{p}) -(\sigma- u) \bar{\theta} \eta(U|\bar{U})  \\
		&\quad = (u-\bar{u})(p-\bar{p}) - (\sigma- u) \rho\left[R\bar{\theta} \Phi\left(\frac{\bar{\rho}}{\rho}\right)+\frac{R}{\gamma-1}\bar{\theta} \Phi\left(\frac{\theta}{\bar{\theta}}\right)+\frac{1}{2}(u-\bar{u})^2\right].
	\end{aligned}
\end{align*}
Using $p-\bar{p} = R\rho (\theta-\bar{\theta}) + R\bar{\theta}(\rho -\bar{\rho})$, \eqref{approx:u-simga}, and \eqref{eq:phi}, we have 
\begin{align*}
	& (u-\bar{u})(p-\bar{p}) -(\sigma- u) \bar{\theta} \eta(U|\bar{U})   \\
	&\, \,  = (u-\bar{u}) \left[R\rho (\theta-\bar{\theta}) + R\bar{\theta}(\rho -\bar{\rho})\right]- (\sigma- u) \rho\left[R\bar{\theta} \Phi\left(\frac{\bar{\rho}}{\rho}\right)+\frac{R}{\gamma-1}\bar{\theta} \Phi\left(\frac{\theta}{\bar{\theta}}\right)+\frac{1}{2}(u-\bar{u})^2\right]  \\
	&\, \, \leq  (u-\bar{u}) \left[R\rho_- (\theta-\bar{\theta}) + R\theta_-(\rho -\bar{\rho})\right]- \frac{R\theta_-}{2\rho_-}\sqrt{\gamma R\theta_-}(\rho -\rhotil)^2 - \frac{R\rho_-}{2(\gamma-1)\theta_-}\sqrt{\gamma R\theta_-}(\theta-\bar{\theta})^2 \\
	&\quad -\frac{\rho_-}{2}\sqrt{\gamma R\theta_-}(u-\bar{u})^2 + C\left(|\rho-\rho_-| + |\bar{\theta} - \theta_-| + |u - u_-|\right)\left|(\rho- \bar{\rho}, u-\bar{u}, \theta- \bar{\theta})\right|^2 \\
	&\quad + C\Big(|\r -\rhotil|^3 + |\th - \thtil|^3\Big)\\
	&\, \, \leq -\frac{R\theta_-\sqrt{\gamma R \theta_-}}{2\rho_-}\left[(\rho- \bar{\rho})- \frac{\rho_-}{\sqrt{\gamma R \theta_-}}(u-\bar{u})\right]^2 - \frac{R\rho_-\sqrt{\gamma R\theta_-}}{2(\gamma-1)\theta_-}\left[(\theta-\bar{\theta}) - \frac{(\gamma-1)\theta_-}{\sqrt{\gamma R \theta_-}}(u-\bar{u})\right]^2  \\
	&\quad + C\left(|\rho-\rho_-| + |\bar{\theta} - \theta_-| + |u - u_-|\right)\left|(\rho- \bar{\rho}, u-\bar{u}, \theta- \bar{\theta})\right|^2 + C\Big(|\r -\rhotil|^3 + |\th - \thtil|^3\Big).
\end{align*}
Here, the last equality holds because
\begin{align*}
	\frac{R\theta_-}{2\rho_-}\sqrt{\gamma R\theta_-} \frac{\rho^2_-}{\gamma R \theta_-} + \frac{R\rho_-}{2(\gamma-1) \theta_-}\sqrt{\gamma R \theta_-} \frac{(\gamma-1)^2 \theta_-}{\gamma R} = \frac{\rho_-}{2}\sqrt{\gamma R \theta_-}.
\end{align*}
Therefore, using $|\r - \r_-|+|\thtil - \th_-| + |u - u_-| \leq |\r - \rhotil|+|\th - \thtil| + |u - \util| + C\delta$, we have 
\begin{align*}
	\mathbf{B}_2 - \mathbf{G} \leq -\mathbf{G}_1 - \mathbf{G}_2 + \mathbf{B}_{new},
\end{align*}
where
\begin{align*}
	\begin{aligned}
		& \mathbf{G}_1 = \frac{R\theta_-}{2\rho_-}\sqrt{\gamma R \theta_-} \int_{\mathbb{R}_+} a_x\left[(\rho- \bar{\rho})- \frac{\rho_-}{\sqrt{\gamma R \theta_-}}(u-\bar{u})\right]^2 \,dx, \\
		& \mathbf{G}_2 = \frac{R\rho_-}{2(\gamma-1)\theta_-}\sqrt{\gamma R\theta_-} \int_{\mathbb{R}_+} a_x\left[(\theta-\bar{\theta}) - \frac{(\gamma-1)\theta_-}{\sqrt{\gamma R \theta_-}}(u-\bar{u})\right]^2 \,dx, \\
		& \mathbf{B}_{new} = C \delta \int_{\mathbb{R}_+} a_x \left|(\rho -\bar{\rho}, u-\bar{u}, \theta -\bar{\theta})\right|^2 \,dx + C\int_{\mathbb{R}_+} a_x  \left|(\rho -\bar{\rho}, u-\bar{u}, \theta -\bar{\theta})\right|^3 \,dx.
	\end{aligned}
\end{align*}
The two good terms $\mathbf{G}_1$ and $\mathbf{G}_2$ will be used in the subsequent analysis. For $\mathbf{B}_{new}$, observe first that
\begin{align*}
	 \delta \int_{\mathbb{R}_+} a_x \left|(\rho -\bar{\rho}, u-\bar{u}, \theta -\bar{\theta})\right|^2 \,dx \leq  C\sqrt{\delta} \int_{\mathbb{R}_+} |\bar{u}_x| \left|(\rho -\bar{\rho}, u-\bar{u}, \theta -\bar{\theta})\right|^2 \,dx \leq C\sqrt{\delta} \mathbf{G}^S.
\end{align*}
For the cubic terms in $\mathbf{B}_{new}$, we use the interpolation inequality and Young's inequality to have
\begin{align*}
	&\int_{\mathbb{R}_+} a_x  \left|(\rho -\bar{\rho}, u-\bar{u}, \theta -\bar{\theta})\right|^3 \,dx  \\
	&\quad \leq C \int_{\mathbb{R}_+} a_x\left[\left|(\rho- \bar{\rho})- \frac{\rho_-}{\sqrt{\gamma R \theta_-}}(u-\bar{u})\right|^3 + \left|(\theta-\bar{\theta}) - \frac{(\gamma-1)\theta_-}{\sqrt{\gamma R \theta_-}}(u-\bar{u})\right|^3 +  \left|u-\bar{u}\right|^3\right] \,dx \\
	&\quad \leq C\varepsilon(\mathbf{G}_1 + \mathbf{G}_2 ) + C\frac{1}{\sqrt{\delta}}  \int_{\mathbb{R}_+} |\bar{u}_x| \|w\|^2_{L^{\infty}(\mathbb{R}_+)} |w| \,dx \\
	&\quad \leq   C\varepsilon(\mathbf{G}_1 + \mathbf{G}_2 ) + C\frac{1}{\sqrt{\delta}}  \|w\|_{L^2(\mathbb{R}_+)} \|w_x\|_{L^2(\mathbb{R}_+)} \left(\int_{\mathbb{R}_+} |\bar{u}_x| |w|^2 \,dx\right)^{\frac{1}{2}} \left(\int_{\mathbb{R}_+} |\bar{u}_x| \,dx\right)^{\frac{1}{2}}   \\
	&\quad \leq C \varepsilon (\mathbf{G}_1 + \mathbf{G}_2  + \mathbf{D}_{u_1} +\mathbf{G}^S ).
\end{align*}
Thus, we obtain
\begin{align*}
	\mathbf{B}_{new} \leq C\sqrt{\delta} \mathbf{G}^S + C\varepsilon (\mathbf{G}_1 + \mathbf{G}_2  + \mathbf{D}_{u_1} +\mathbf{G}^S ).
\end{align*}

$\bullet$ \textbf{Estimates on $\displaystyle{-\frac{\delta}{2M}|\dot{\mathbf{X}}|^2}$:} To estimate the term $-\frac{\delta}{2M}|\dot{\mathbf{X}}|^2$, we focus on estimating $\mathbf{Y}_1$, $\mathbf{Y}_2$, and $\mathbf{Y}_3$. First, using $|a-1|\le \sqrt{\delta}$, $|\rho-\bar{\rho}|\le C\varepsilon$, and $|\bar{\rho}-\rho_-|\le C\delta $, we obtain
\begin{equation} \label{esty2}
	\left|\mathbf{Y}_1+\delta\rho_-\int_{y_0}^{1}w\,dy\right|\le C\delta(\sqrt{\delta}+\varepsilon)\int_{y_0}^{1}|w|\,dy.    
\end{equation}
Next, we split $\mathbf{Y}_2$ into the following:
\begin{align*}
	\mathbf{Y}_2=&R\int_{\mathbb{R}_+}a\frac{\bar{\theta}}{\bar{\rho}}\left[(\rho-\bar{\rho})-\frac{\rho_-}{\sqrt{\gamma R \theta_-}}(u-\bar{u})\right]\bar{\rho}_x\,dx+\frac{R\rho_-}{\sqrt{\gamma R \theta_-}}\int_{\mathbb{R}_+}a\frac{\bar{\theta}}{\bar{\rho}}(u-\bar{u})\bar{\rho}_x\,dx\\   
	=&\frac{R\rho_-}{\sqrt{\gamma R \theta_-}}\int_{\mathbb{R}_+}a\frac{\bar{\theta}}{\bar{\rho}}(u-\bar{u})\left(\bar{\rho}_x-\frac{\rho_-}{\sqrt{\gamma R \theta_-}}\bar{u}_x\right)\,dx+\frac{\rho_-^2}{\gamma \theta_-}\int_{\mathbb{R}_+}a\frac{\bar{\theta}}{\bar{\rho}}(u-\bar{u})\bar{u}_x\,dx
	\\&+R\int_{\mathbb{R}_+}a\frac{\bar{\theta}}{\bar{\rho}}\left[(\rho-\bar{\rho})-\frac{\rho_-}{\sqrt{\gamma R \theta_-}}(u-\bar{u})\right]\bar{\rho}_x\,dx.\\
\end{align*}
Using \eqref{estderi}, we have
\begin{align}
	\begin{aligned}\label{esty1}
		\left|\mathbf{Y}_2+\delta\frac{\rho_-}{\gamma}\int_{y_0}^{1}w\,dy\right|	\le C\delta\sqrt{\delta}\int_{y_0}^{1}|w|\,dy+C\sqrt{\delta}\int_{\mathbb{R}_+}|a_x|\left|\rho-\bar{\rho}-\frac{\rho_-}{\sqrt{\gamma R \theta_-}}(u-\bar{u})\right|\,dx.   
	\end{aligned}
\end{align}

Finally, we decompose $\mathbf{Y}_3$ as follows:
\begin{align*}
	\mathbf{Y}_3=&\frac{R}{\gamma-1}\int_{\mathbb{R}_+}a\frac{\rho}{\bar{\theta}}\left[\theta-\bar{\theta}-\frac{(\gamma-1)\theta_-}{\sqrt{\gamma R \theta_-}}(u-\bar{u})\right]\bar{\theta}_x\,dx+\frac{R\theta_-}{\sqrt{\gamma R\theta_-}}\int_{\mathbb{R}_+}a\frac{\rho}{\bar{\theta}}(u-\bar{u})\bar{\theta}_x\,dx\\
	=&\frac{R\theta_-}{\sqrt{\gamma R\theta_-}}\int_{\mathbb{R}_+}a\frac{\rho}{\bar{\theta}}(u-\bar{u})\left(\bar{\theta}_x-\frac{(\gamma-1)\theta_-}{\sqrt{\gamma R \theta_-}}\bar{u}_x\right)\,dx+\frac{(\gamma-1)\theta_-}{\gamma}\int_{\mathbb{R}_+}a\frac{\rho}{\bar{\theta}}(u-\bar{u})\bar{u}_x\,dx\\
	&+\frac{R}{\gamma-1}\int_{\mathbb{R}_+}a\frac{\rho}{\bar{\theta}}\left[\theta-\bar{\theta}-\frac{(\gamma-1)\theta_-}{\sqrt{\gamma R \theta_-}}(u-\bar{u})\right]\bar{\theta}_x\,dx.
\end{align*}

From \eqref{estderi}, we get
\begin{align}
	\begin{aligned}\label{esty3}
		\left|\mathbf{Y}_3+\delta\frac{(\gamma-1)\rho_-}{\gamma}\int_{y_0}^{1}wdy\right| \le C\delta(\sqrt{\delta}+\varepsilon)\int_{y_0}^{1}|w|dy+C\sqrt{\delta}\int_{\mathbb{R}_+}|a_x|\left|\theta-\bar{\theta}-\frac{(\gamma-1)\theta_-}{\sqrt{\gamma R \theta_-}}(u-\bar{u})\right|dx.   
	\end{aligned}
\end{align}
Therefore, using \eqref{XY123}, \eqref{esty2}, \eqref{esty1}, and \eqref{esty3}, we have 
\begin{align*}
	\begin{aligned}
		\left| \dot{\mathbf{X}} - 2\rho_- M\int_{y_0}^{1}w\,dy\right| \leq & C (\sqrt{\d}+\varepsilon)\int_{y_0}^{1}|w|\,dy+C\frac{1}{\sqrt{\delta}}\int_{\mathbb{R}_+}|a_x|\left|\rho-\bar{\rho}-\frac{\rho_-}{\sqrt{\gamma R \theta_-}}(u-\bar{u})\right|\,dx\\
		&+C\frac{1}{\sqrt{\delta}}\int_{\mathbb{R}_+}|a_x|\left|\theta-\bar{\theta}-\frac{(\gamma-1)\theta_-}{\sqrt{\gamma R \theta_-}}(u-\bar{u})\right|\,dx,
	\end{aligned}
\end{align*}
which yields
\begin{align*}
	\left(\left|2\rho_- M\int_{y_0}^{1}w\,dy\right| - |\dot{\mathbf{X}}| \right)^2 \leq C (\sqrt{\d}+\varepsilon)^2\int_{y_0}^{1}|w|^2\,dy+C\frac{1}{\delta}(\mathbf{G}_1 + \mathbf{G}_2)\int_{\mathbb{R}_+}|a_x| \,dx.
\end{align*}
Thanks to the algebraic inequality $\frac{p^2}{2}-q^2 \le (p-q)^2$ for $p,q \in \mathbb{R}$, we obtain
\begin{align}\label{est:goodx}
	-\frac{\delta}{2M}|\dot{\mathbf{X}}|^2\le - M\rho_-^2\delta\left(\int_{y_0}^1w\,dy\right)^2 +C\delta(\sqrt{\delta}+\varepsilon)^2 \int_{y_0}^{1}|w|^2\,dy +C\sqrt{\delta} (\mathbf{G}_1+ \mathbf{G}_2). 
\end{align}
	\noindent $\bullet$ \textbf{Estimates on $\mathbf{B}_1$:}	
	We now decompose the leading-order bad term $\mathbf{B}_1$ as follows:
	\begin{align}\label{eq:B1}
		\begin{aligned}
			\mathbf{B}_1 &:= \int_{\mathbb{R}_+} a |\bar{u}_x| \bigg[\frac{R(\gamma -1) \theta_-}{2\rho_-} |\rho - \bar{\rho}|^2 +\rho_- |u-\bar{u}|^2 + \frac{R\rho_-}{2\theta_-} |\theta - \bar{\theta}|^2  + \frac{R(\gamma-1) \theta_-}{\sqrt{R\gamma \theta_-}}|\rho- \bar{\rho}||u-\bar{u}|    \\
			&\phantom{= \int_{\mathbb{R}_+} a |\bar{u}_x| \bigg[} + \frac{R \rho_-}{\sqrt{R\gamma \theta_-}}|u-\bar{u}||\theta- \bar{\theta}|\bigg] \,dx\\
			&=: \mathbf{B}_{11} + \mathbf{B}_{12} + \mathbf{B}_{13}  + \mathbf{B}_{14} + \mathbf{B}_{15}.\\
		\end{aligned}
	\end{align}
Using the inequality $(a+b)^2 \leq (1+\delta^{1/4})a^2 + \left(1+\frac{1}{2}\delta^{-1/4}\right)b^2$ for $a,b \in \mathbb{R}$ and \eqref{eq:ax}, we have
\begin{align}\label{est:B11}
	\begin{aligned}
		\mathbf{B}_{11} & = \frac{R(\gamma -1) \theta_-}{2\rho_-} \int_{\mathbb{R}_+} a |\bar{u}_x|  \left|\left((\rho - \bar{\rho})-\frac{\rho_-}{\sqrt{R\gamma \theta_-}}(u-\bar{u})\right)  + \frac{\rho_-}{\sqrt{R\gamma \theta_-}}(u-\bar{u})\right|^2   \,dx   \\
		& \leq \frac{R(\gamma -1) \theta_-}{2\rho_-} (1+C\delta^{\frac{1}{2}} + C\delta^{\frac{1}{4}}  ) \int_{\mathbb{R}_+} |\bar{u}_x|\left| \frac{\rho_-}{\sqrt{R\gamma \theta_-}}(u-\bar{u})\right|^2   \,dx    \\
		&\qquad + C\big(1+\delta^{-\frac{1}{4}}\big)  \int_{\mathbb{R}_+} |\bar{u}_x| \left|(\rho - \bar{\rho})-\frac{\rho_-}{\sqrt{R\gamma \theta_-}}(u-\bar{u})\right|^2 \,dx  \\
		& \leq \frac{(\gamma- 1)\rho_-}{2\gamma} (1+C\delta^{\frac{1}{4}}  ) \delta \int_{y_0}^{1} |w|^2 \,dy + C\delta^{\frac{1}{4}} \mathbf{G}_1.
	\end{aligned}
\end{align}
For $\mathbf{B}_{12}$, we have 
\begin{align}\label{est:B12}
	\mathbf{B}_{12} = \rho_-  \int_{\mathbb{R}_+}a |\bar{u}_x| |u-\bar{u}|^2 \,dx \leq \rho_-(1+ \delta^{\frac{1}{2}}) \delta  \int_{y_0}^{1} |w|^2 \,dy.
\end{align}
The terms $\mathbf{B}_{13}$ through $\mathbf{B}_{15}$ can be estimated in the same way as in \eqref{est:B11}, as follows:
\begin{align}\label{est:B1345}
	\begin{split}
		&\begin{aligned}
			\mathbf{B}_{13} & = \frac{R\rho_-}{2\theta_-} \int_{\mathbb{R}_+} a |\bar{u}_x|  \left|\left((\theta - \bar{\theta})-\frac{(\gamma-1) \theta_-}{\sqrt{R\gamma \theta_-}}(u-\bar{u})\right)  + \frac{(\gamma-1) \theta_-}{\sqrt{R\gamma \theta_-}}(u-\bar{u})\right|^2   \,dx \\
			& \leq \frac{(\gamma- 1)^2}{2\gamma} \rho_-(1+C\delta^{\frac{1}{4}}  ) \delta \int_{y_0}^{1} |w|^2 \,dy + C\delta^{\frac{1}{4}} \mathbf{G}_2,
		\end{aligned}\\
		&\begin{aligned}
			\mathbf{B}_{14} & =   \frac{R(\gamma-1) \theta_-}{\sqrt{R\gamma \theta_-}} \int_{\mathbb{R}_+} a |\bar{u}_x| \left|\left((\rho - \bar{\rho})-\frac{\rho_-}{\sqrt{R\gamma \theta_-}}(u-\bar{u})\right)  + \frac{\rho_-}{\sqrt{R\gamma \theta_-}}(u-\bar{u})\right| |u-\bar{u}| \,dx \\
			& \leq \frac{\gamma- 1}{\gamma} \rho_-(1+C\delta^{\frac{1}{4}}  ) \delta \int_{y_0}^{1} |w|^2 \,dy + C\delta^{\frac{1}{4}} \mathbf{G}_1,
		\end{aligned}\\
		&\begin{aligned}
			\mathbf{B}_{15}& =  \frac{R \rho_-}{\sqrt{R\gamma \theta_-}} \int_{\mathbb{R}_+} a |\bar{u}_x| |u-\bar{u}|  \left|\left((\theta - \bar{\theta})-\frac{(\gamma-1) \theta_-}{\sqrt{R\gamma \theta_-}}(u-\bar{u})\right)  + \frac{(\gamma-1) \theta_-}{\sqrt{R\gamma \theta_-}}(u-\bar{u})\right| \,dx \\
			& \leq \frac{\gamma- 1}{\gamma} \rho_-(1+C \delta^{\frac{1}{4}}  ) \delta \int_{y_0}^{1} |w|^2 \,dy + C\delta^{\frac{1}{4}} \mathbf{G}_2.
		\end{aligned}
	\end{split}
\end{align}
Substituting \eqref{est:B11}, \eqref{est:B12}, and \eqref{est:B1345} into \eqref{eq:B1} yields
\begin{align}\label{est:B1}
	\begin{aligned}
		\mathbf{B}_1 \leq \frac{\gamma^2 + 5\gamma -4}{2\gamma}  \rho_-(1+ C\delta^{\frac{1}{4}}  ) \delta \int_{y_0}^{1} |w|^2 \,dy + C\delta^{\frac{1}{4}} (\mathbf{G}_1 + \mathbf{G}_2).
	\end{aligned}
\end{align}

$\bullet$ \textbf{Estimates on $\mathbf{D}$:} First, recall from \eqref{termG} that
\[\mathbf{D}(U)=\mu\int_{\mathbb{R}_+}a|(u-\bar{u})_x|^2\,dx+\kappa\int_{\mathbb{R}_+}\frac{a}{\theta}|(\theta-\bar{\theta})_x|^2\,dx:=\mathbf{D}_{u_1}(U)+\mathbf{D}_{\theta_1}(U).\]
Using $a\ge1$, we find that
\begin{equation*} 
	\mathbf{D}_{u_1}\ge \mu \int_{\mathbb{R}_+}|(u-\bar{u})_x|^2\,dx=\mu\int_{y_0}^{1}|\partial_yw|^2\left(\frac{dy}{dx}\right)\,dy.    
\end{equation*}
From Appendix \ref{appjacobian}, we have
\begin{equation*} 
	\left|\mu\frac{1}{y(1-y)}\frac{dy}{dx}- \frac{\gamma+1}{2}\rho_- \frac{\mu R\gamma}{\mu R\gamma+\kappa (\gamma-1)^2} \delta\right| \leq C\delta^2.
\end{equation*}
This implies that
\begin{align}\label{est:Du1}
	\begin{aligned}
\mathbf{D}_{u_1}	\geq \mu\int_{y_0}^{1}|\partial_yw|^2\left(\frac{dy}{dx}\right)\,dy&\ge \frac{\gamma+1}{2}\rho_-\frac{\mu R\gamma}{\mu R\gamma+\kappa (\gamma-1)^2} (1-C\delta^2) \delta\int_{y_0}^{1}|\partial_yw|^2y(1-y)\,dy\\
	&\ge \frac{\gamma+1}{2}\rho_-\frac{\mu R\gamma}{\mu R\gamma+\kappa (\gamma-1)^2} (1-C\delta^2) \delta\int_{y_0}^{1}(y-y_0)(1-y)|\partial_yw|^2\,dy.
\end{aligned}
\end{align}
Likewise, we obtain
\begin{align*}
	\mathbf{D}_{\theta_1}&\ge\kappa \int_{y_0}^{1}\frac{1}{\theta}|(\theta-\bar{\theta})_y|^2\left(\frac{dy}{dx}\right)\,dy\\
	&\ge  \frac{\gamma+1}{2}\frac{\rho_-\kappa}{\mu\theta_-}\frac{\mu R\gamma}{\mu R\gamma+\kappa (\gamma-1)^2}\left(1-C(\delta +\varepsilon)\right) \delta \int_{y_0}^{1}(y-y_0)(1-y)|(\theta-\bar{\theta})_y|^2\,dy.
\end{align*}
At first glance, since the estimates of the leading-order bad term $\mathbf{B}_1$ in \eqref{est:B1} are related to the variable $u-\util$, one might hope to control it using only the diffusion term $\mathbf{D}_{u_1}$ in \eqref{est:Du1} and the Poincar\'e-type inequality in Lemma \ref{Poincare}. However, since
\begin{equation*} 
	1>\frac{\mu R\gamma}{\mu R\gamma+\kappa (\gamma-1)^2} \rightarrow 0  \qquad \text{as} \qquad \gamma \rightarrow \infty,
\end{equation*}
it follows that the diffusion term $\mathbf{D}_{u_1}$ alone is insufficient to control \eqref{est:B1}. Therefore, as in \cite{KVW-NSF}, we first apply the Poincar\'e-type inequality to $\mathbf{D}_{\theta_1}$, and subsequently obtain an additional good term involving $u-\util$.

First, using Lemma \ref{Poincare} and 
\[\int_{y_0}^{1} |w-\bar{w}|^2 \,dy = \int_{y_0}^{1} w^2 \,dy - (1-y_0) \bar{w}^2,\]
where $\bar{w} = \frac{1}{1-y_0} \int_{y_0}^{1} w \,dy$,
we have 
\begin{align*} 
	\begin{aligned}
	\mathbf{D}\ge & (\gamma + 1)\rho_-\frac{\mu R\gamma}{\mu R\gamma+\kappa (\gamma-1)^2}\left(1-C(\delta +\varepsilon)\right)\d \bigg[\int_{y_0}^1w^2\,dy-(1-y_0) |\bar{w}|^2   \\
	&\phantom{(\gamma + 1)\rho_-\frac{\mu R\gamma}{\mu R\gamma+\kappa (\gamma-1)^2}}+\frac{\kappa}{\mu \theta_-}\left(\int_{y_0}^{1}|\theta-\bar{\theta}|^2\,dy-\frac{1}{1-y_0}\left(\int_{y_0}^{1}(\theta-\bar{\theta})\,dy\right)^2\right)\bigg].  
	\end{aligned}
\end{align*}
Observe that Young's inequality and Cauchy-Schwartz inequality yield
\begin{align*}
	\int_{y_0}^{1}|\theta-\bar{\theta}|^2\,dy&=\int_{y_0}^{1}\left|(\theta-\bar{\theta})-\frac{(\gamma-1)\theta_-}{\sqrt{\gamma R \theta_-}}(u-\bar{u})+\frac{(\gamma-1)\theta_-}{\sqrt{\gamma R \theta_-}}(u-\bar{u})\right|^2\,dy \\
	&\ge\frac{(\gamma-1)^2\theta_-}{\gamma R}(1-\delta^{\frac{1}{4}})\int_{y_0}^{1}w^2\,dy-C\delta^{-\frac{1}{4}}\int_{y_0}^{1}\left|(\theta-\bar{\theta})-\frac{(\gamma-1)\theta_-}{\sqrt{\gamma R \theta_-}}(u-\bar{u})\right|^2\,dy,  
\end{align*}
and
\begin{align*}
	\left(\int_{y_0}^1(\theta-\bar{\theta})\,dy\right)^2&\le2\left(\int_{y_0}^{1}\frac{(\gamma-1)\theta_-}{\sqrt{\gamma R \theta_-}}(u-\bar{u})\,dy\right)^2+2 \left(\int_{y_0}^{1}\left(\theta-\bar{\theta}-\frac{(\gamma-1)\theta_-}{\sqrt{\gamma R \theta_-}}(u-\bar{u})\right)\,dy\right)^2\\
	&\le 2\frac{(\gamma-1)^2\theta_-}{\gamma R} (1-y_0)^2 \bar{w}^2+2(1-y_0)\int_{y_0}^{1}\left|(\theta-\bar{\theta})-\frac{(\gamma-1)\theta_-}{\sqrt{\gamma R \theta_-}}(u-\bar{u})\right|^2\,dy.
\end{align*}
Thus, from $\frac{\mu R\gamma}{\mu R\gamma+\kappa (\gamma-1)^2}<1$ and $0<y_0<\frac{1}{8}$, we obtain
\begin{align*}
	\begin{aligned} 
		\mathbf{D}\ge& \left(\gamma+1\right) \rho_-\left(1-\delta^{\frac{1}{4}}-C(\varepsilon+ \delta)\right)  \delta \int_{y_0}^{1}|w|^2\,dy\\
		&-\left(\gamma+1\right) \rho_- \delta\left[1+\frac{2\kappa(\gamma-1)^2}{\mu R \gamma}\right] \frac{8}{7} \left(\int_{y_0}^{1} w \,dy \right)^2-C\delta^{\frac{1}{4}}\mathbf{G}_2.
	\end{aligned}    
\end{align*}
Here, we define a quantity that will be used below:
\[\alpha_{\gamma}:=\frac{\gamma^2 + 5\gamma -4}{2\gamma}-\frac{7(\gamma+1)}{8}=-\frac{3\gamma^2-13\gamma+16}{8\gamma}<0,\]
for $\gamma> 1$. \vspace{0.3cm}

$\bullet$ \textbf{Conclusion:} Combining the above estimates and using the smallness of $\delta$ and $\varepsilon$, we have
\begin{align*}
	&-\frac{\delta}{2M}|\dot{\mathbf{X}}|^2+\mathbf{B}_1+\mathbf{B}_2-\mathbf{G}-\frac{7}{8}\mathbf{D}\\
	&\quad \le  -\frac{1}{2}(\mathbf{G}_1 + \mathbf{G}_2) + C(\varepsilon + \sqrt{\delta}) \mathbf{G}^S + \frac{1}{2}\alpha_{\gamma}\rho_-\delta\int_{y_0}^{1} |w|^2 \,dy \\
	&\qquad -\delta M\rho_-^2\left(\int_{y_0}^1w\,dy\right)^2+ \rho_-\left(\gamma+1\right) \delta\left[1+\frac{2\kappa(\gamma-1)^2}{\mu R \gamma}\right]\left(\int_{y_0}^{1}w\,dy\right)^2.
\end{align*}
Choosing $M= \frac{1}{\rho_-}\left(\gamma+1\right)\left[1+\frac{2\kappa(\gamma-1)^2}{\mu R\gamma}\right]$, we have 
\begin{align*}
	&-\frac{\delta}{2M}|\dot{\mathbf{X}}|^2+\mathbf{B}_1+\mathbf{B}_2-\mathbf{G}-\frac{7}{8}\mathbf{D}\\
	&\quad \le  -\frac{1}{2}(\mathbf{G}_1 + \mathbf{G}_2) + C(\varepsilon + \sqrt{\delta}) \mathbf{G}^S + \frac{1}{2} \alpha_{\gamma}\rho_-\delta\int_{y_0}^{1} |w|^2 \,dy.
\end{align*}
Then using 
\begin{align*}
   \delta \int_{y_0}^{1} |w|^2 \,dy = \int_{\mathbb{R}_+} |\bar{u}_x| |u-\bar{u}|^2 \,dx,
\end{align*}
together with
\begin{align*}
	\int_{\mathbb{R}_+} |\bar{u}_x| |\rho -\bar{\rho}|^2 \,dx & \leq 2 \int_{\mathbb{R}_+} |\bar{u}_x|\left|(\rho - \bar{\rho})-\frac{\rho_-}{\sqrt{R\gamma \theta_-}}(u-\bar{u})\right|^2\,dx + 2 \int_{\mathbb{R}_+}|\bar{u}_x|\left|\frac{\rho_-}{\sqrt{R\gamma \theta_-}}(u-\bar{u}) \right|^2\,dx \\
	& \leq C\sqrt{\delta} \mathbf{G}_1 + C\int_{\mathbb{R}_+} |\bar{u}_x| |u-\bar{u}|^2 \,dx,
\end{align*}
and
\begin{align*}
		\int_{\mathbb{R}_+} |\bar{u}_x| |\theta -\bar{\theta}|^2 \,dx & \leq 2	\int_{\mathbb{R}_+} |\bar{u}_x| \left|(\theta-\bar{\theta})-\frac{(\gamma-1)\theta_-}{\sqrt{\gamma R \theta_-}}(u-\bar{u}) \right|^2 \,dx +\int_{\mathbb{R}_+} |\bar{u}_x|\left|\frac{(\gamma-1)\theta_-}{\sqrt{\gamma R \theta_-}}(u-\bar{u})\right|^2\,dx \\
		&  \leq C\sqrt{\delta} \mathbf{G}_2 + C\int_{\mathbb{R}_+} |\bar{u}_x| |u-\bar{u}|^2 \,dx,
\end{align*}
we have 
\begin{align*}
		-\frac{\delta}{2M}|\dot{\mathbf{X}}|^2+\mathbf{B}_1+\mathbf{B}_2-\mathbf{G}-\frac{7}{8}\mathbf{D}
	\le  -\frac{1}{4}(\mathbf{G}_1 + \mathbf{G}_2) - C_*\mathbf{G}^S.
\end{align*}
for some constant $C_*>0$.
\end{proof}

\subsection{Estimates of remaining terms}
It follows from \eqref{est:REPri} and \eqref{eq:XY} that 

\begin{align*}
	\begin{aligned}
		\frac{d}{dt} \int_{\mathbb{R}_+} a\bar{\theta} \eta(U|\bar{U}) \,dx  
		\leq &  -\frac{\delta}{2M}|\dot{\mathbf{X}}|^2+\mathbf{B}_1+\mathbf{B}_2-\mathbf{G}-\frac{7}{8}\mathbf{D}  \\
		& -\frac{\delta}{2M}|\dot{\mathbf{X}}|^2 +   \dot{\mathbf{X}} \sum_{i=4}^{6}\mathbf{Y}_i+ \sum_{i=3}^{5} \mathbf{B}_i   - \frac{1}{8}\mathbf{D}+ \mathcal{P}.
	\end{aligned}
\end{align*}
Then by Lemma \ref{lma:leading} and Young's inequality
\begin{align}\label{est:twoparts}
	\begin{aligned}
		\frac{d}{dt} \int_{\mathbb{R}_+} a\bar{\theta} \eta(U|\bar{U}) \,dx  
		\leq &   -\frac{1}{4}(\mathbf{G}_1 + \mathbf{G}_2) - C_*\mathbf{G}^S  \\
		& -\frac{\delta}{4M}|\dot{\mathbf{X}}|^2 +  \frac{C}{\delta} \sum_{i=4}^{6}|\mathbf{Y}_i|^2 + \sum_{i=3}^{5} \mathbf{B}_i   - \frac{1}{8}\mathbf{D}+ \mathcal{P}.
	\end{aligned}
\end{align}
In what follows, we will control the remaining bad terms on the right hand side of \eqref{est:twoparts}. \vspace{0.3cm}

\noindent $\bullet$ \textbf{Estimate of $\mathbf{Y}_i(i=4,5,6)$:} By virtue of \eqref{eq:phi} and \eqref{estderi}, we obtain 
\begin{align*}
	\begin{aligned}
		|(\mathbf{Y}_4, \mathbf{Y}_5)| \leq C \int_{\mathbb{R}_+} |\bar{u}_x| \left|(\rho- \bar{\rho}, \theta - \bar{\theta})\right|^2 \,dx. 
	\end{aligned}
\end{align*}
On the other hand, from Lemma \ref{lem:viscous_shock} and \eqref{apriori_small}, we have 
\begin{align*}
		|(\mathbf{Y}_4, \mathbf{Y}_5)| \leq C  \delta^2 \int_{\mathbb{R}_+} \left|(\rho- \bar{\rho},  \theta - \bar{\theta})\right|^2 \,dx  \leq C\delta^2 \varepsilon^2.
\end{align*}
Thus, we have 
\begin{align*}
	\frac{C}{\delta} 	|(\mathbf{Y}_4, \mathbf{Y}_5)|^2 \leq C \delta \varepsilon^2 \mathbf{G}^S.
\end{align*}
Similarly, we obtain 
\begin{align*}
		\frac{C}{\delta} 	|\mathbf{Y}_6|^2 \leq C\varepsilon^2 \mathbf{G}^S.
\end{align*}
Therefore, we have 
\begin{align}\label{est:Y4-6}
	\frac{C}{\delta} \sum_{i=4}^{6}	|\mathbf{Y}_i|^2 \leq C \varepsilon \mathbf{G}^S.
\end{align}

\noindent $\bullet$ \textbf{Estimate of $\mathbf{B}_i(i=3,4,5)$:} Using Young's inequality, and noting from  \eqref{shock_prop} that
\[
\|a_x\|_{L^\infty} \leq C\delta^{-1/2}\|u_x\|_{L^\infty} \leq C\delta\sqrt{\delta},
\]
we have
\begin{align*}
	\begin{aligned}
		|\mathbf{B}_3|  & \leq C\int_{\mathbb{R}_+} |a_x|\left(|u-\bar{u}||(u-\bar{u})_x| + |\theta -\bar{\theta}||(\theta- \bar{\theta})_x|\right) \,dx   \\
		& \leq \frac{1}{80} (\mathbf{D}_{u_1} + \mathbf{D}_{\theta_1}) + C\int_{\mathbb{R}_+} |a_x|^2 |(u-\bar{u}, \theta- \bar{\theta})|^2 \,dx \\
		& \leq  \frac{1}{80} (\mathbf{D}_{u_1} + \mathbf{D}_{\theta_1}) + C \delta \mathbf{G}^S.
	\end{aligned}
\end{align*}
Similarly, using Young's inequality and \eqref{shock_prop}, we have 
\begin{align*}
	|\mathbf{B}_4| &\leq C\e \intRp \big(|\util_x||\th - \thtil|^2 + |(u-\util)_x|^2 + |(\th - \thtil)_x|^2\big)\,dx\\
	&\quad + C\intRp |\util_x||\th - \thtil|\big(|(u - \util)_x|, |(\th - \thtil)_x|\big)\,dx +C\delta \intRp |\util_x||(\rho -\rhotil, u-\util, \th-\thtil)|^2\,dx\\
	&\leq C(\varepsilon + \delta)\mathbf{G}^S + \left(C\varepsilon + \frac{1}{80}\right) (\mathbf{D}_{u_1}+\mathbf{D}_{\theta_1}).
\end{align*}
For $\mathbf{B}_5$, the definition of $\mathbf{G}^S$ implies that
\begin{align*}
	|\mathbf{B}_5| \leq C\delta \mathbf{G}^S.
\end{align*}
Thus, we conclude that
\begin{align}\label{est:B3-5}
	\sum_{i=3}^{5} \mathbf{B}_i \leq \frac{1}{20} (\mathbf{D}_{u_1} + \mathbf{D}_{\theta_1}) + C(\varepsilon + \delta) \mathbf{G}^S.
\end{align}

\subsection{Estimates on the boundary terms}
Compared to the whole-space problem (see, e.g., \cite{HKK23,KVW23,KVW-NSF}), the boundary terms in $\mathcal{P}$ arise from integration by parts. In the lemma below, we provide estimates for these boundary terms, which are controlled by the outflow and impermeable wall boundary conditions, the constant $\beta$, and the small portion of the second-order terms of $u$ and $\theta$.

\begin{lemma} \label{lem:boundary}
	Under the assumption of Proposition \ref{prop:main} (or Proposition \ref{prop:main2}), there exists $C>0$ independent of $\delta$ such that
	\begin{align*}
		\begin{aligned}
			\int_{0}^t\mathcal{P}\, d \tau  \le&  Cu_- 	\int_{0}^{t} \left(\bar{\theta} \eta(U|\bar{U})\right)|_{x=0} \,d\tau + Ce^{-C\delta \beta}   \\
			& +C\varepsilon^2\left(\int_{0}^t\|(u-\bar{u})_{xx}\|_{L^2(\mathbb{R}+)}^{2}+ \int_{0}^{t} \|(\theta-\bar{\theta})_{xx}\|^2_{L^{2}(\mathbb{R_+})}\right)\,d\tau
		\end{aligned}
	\end{align*}

	for all $t\in \left[0, T\right]$.
\end{lemma}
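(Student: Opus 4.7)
\textbf{Proof plan for Lemma \ref{lem:boundary}.} The plan is to decompose $\int_0^t\mathcal{P}\,d\tau=\sum_{i=1}^5\int_0^t\mathcal{P}_i\,d\tau$ and estimate each piece using three ingredients: (i) the exponential smallness of the viscous shock's boundary discrepancy, (ii) the trace-interpolation inequality $|f(0)|^2\le 2\|f\|_{L^2(\R_+)}\|f_x\|_{L^2(\R_+)}$ (valid since the perturbation lies in $H^1(\R_+)$), and (iii) the a priori smallness $\|(\rho-\rhotil,u-\util,\th-\thtil)\|_{H^1}\le\varepsilon$. First I would prepare the uniform boundary bound: from \eqref{shock_prop} together with \eqref{bddx0}, one gets $|\rhotil(t,0)-\rho_-|+|\util(t,0)-u_-|+|\thtil(t,0)-\theta_-|\le C\delta e^{-C\delta\beta}e^{-C\delta\sigma t/2}$. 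Since $u(t,0)=u_-$ and $\theta(t,0)=\theta_-$ are prescribed in both the outflow and impermeable cases, this yields $\alpha(t):=|(u-\util)(t,0)|+|(\theta-\thtil)(t,0)|\le C\delta e^{-C\delta\beta}e^{-C\delta t}$, so that $\int_0^\infty\alpha(\tau)^p\,d\tau\le Ce^{-C\delta\beta}$ for any $p\ge 1$.

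Next I would dispose of the trivial pieces. For $\mathcal{P}_1=au_-\bar\theta\eta(U|\bar U)|_{x=0}$, the prescribed boundary values give exactly the first term $Cu_-\int_0^t(\bar\theta\eta(U|\bar U))|_{x=0}\,d\tau$ on the right-hand side (which is $\le 0$ since $u_-\le 0$; in the impermeable case $u_-=0$ kills $\mathcal{P}_1$ altogether). For $\mathcal{P}_4=Ra\rho(u-\util)(\theta-\thtil)|_{x=0}$, both perturbation factors at $x=0$ are of size $\alpha(t)$, so $|\mathcal{P}_4|\le C\alpha(t)^2$ and $\int_0^t\mathcal{P}_4\,d\tau\le Ce^{-C\delta\beta}$. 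The term $\mathcal{P}_5=Ra\bar\theta(\rho-\rhotil)(u-\util)|_{x=0}$ is the first place where one must cope with the absence of a boundary condition on $\rho$: here I would use Sobolev embedding in 1D to get $|(\rho-\rhotil)(t,0)|\le C\|\rho-\rhotil\|_{H^1(\R_+)}\le C\varepsilon$, so that $|\mathcal{P}_5|\le C\varepsilon\,\alpha(t)$ and again $\int_0^t\mathcal{P}_5\,d\tau\le Ce^{-C\delta\beta}$.

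The hard terms are $\mathcal{P}_2=-\mu a(u-\util)(u-\util)_x|_{x=0}$ and $\mathcal{P}_3=-\kappa\frac{a}{\theta}(\theta-\thtil)(\theta-\thtil)_x|_{x=0}$, since neither $(u-\util)_x$ nor $(\theta-\thtil)_x$ has a boundary condition. For $\mathcal{P}_2$ I would bound $|\mathcal{P}_2|\le C\alpha(t)\,|(u-\util)_x(t,0)|$ and then apply the trace identity $|(u-\util)_x(t,0)|^2=-2\int_0^\infty(u-\util)_x(u-\util)_{xx}\,dx\le 2\|(u-\util)_x\|_{L^2}\|(u-\util)_{xx}\|_{L^2}\le 2\varepsilon\,\|(u-\util)_{xx}\|_{L^2}$. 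Young's inequality with parameter $\eta=\varepsilon^2$ then splits this as $|\mathcal{P}_2|\le \tfrac{C\alpha(t)^2}{\varepsilon^2}+\varepsilon^2\|(u-\util)_{xx}\|_{L^2}^2$; integrating in $\tau$ gives exactly $Ce^{-C\delta\beta}+C\varepsilon^2\int_0^t\|(u-\util)_{xx}\|_{L^2}^2\,d\tau$, where the constant absorbs the harmless $\varepsilon^{-2}$ factor against the super-exponential factor $e^{-C\delta\beta}$ (permissible because the lemma's $C$ is allowed to depend on $\varepsilon$ but not on $\delta$ or $\beta$). The term $\mathcal{P}_3$ is handled identically with $\theta-\thtil$ in place of $u-\util$, using also the uniform positive lower bound on $\theta$ granted by Proposition \ref{prop:local}.

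The main obstacle, conceptually, is that $\rho$ carries no boundary datum, so $\mathcal{P}_5$ cannot be treated by mimicking the argument for $\mathcal{P}_2$; the resolution is to trade pointwise information at $x=0$ for the $H^1$-norm via Sobolev embedding and absorb everything against the exponentially small factor $\alpha(t)$ produced by the prescribed values of $u$ and $\theta$. Technically, the delicate step is choosing the Young parameter $\eta=\varepsilon^2$ in the estimates for $\mathcal{P}_2$ and $\mathcal{P}_3$ precisely so that the resulting diffusion-type term has coefficient $\varepsilon^2$, exactly matching the statement of the lemma and allowing the second-order dissipation terms $D_{u_2},D_{\theta_2}$ produced in the higher-order estimates of Section 5 to absorb them.
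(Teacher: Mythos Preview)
Your decomposition and overall strategy match the paper's proof: $\mathcal{P}_1$ is the good sign term, $\mathcal{P}_4,\mathcal{P}_5$ are handled by the exponentially small boundary discrepancy $\alpha(t)$ together with a uniform $L^\infty$ bound on one factor, and $\mathcal{P}_2,\mathcal{P}_3$ need the trace--interpolation--Young mechanism to produce the $\varepsilon^2\|(\cdot)_{xx}\|_{L^2}^2$ terms. So the plan is correct.

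There is, however, a genuine slip in your Young step for $\mathcal{P}_2$ (and by analogy $\mathcal{P}_3$). From $|\mathcal{P}_2|\le C\alpha(t)\,|(u-\util)_x(t,0)|$ and the trace bound $|(u-\util)_x(t,0)|^2\le 2\varepsilon\,\|(u-\util)_{xx}\|_{L^2}$, a quadratic Young split only yields a term \emph{linear} in $\|(u-\util)_{xx}\|_{L^2}$, not the quadratic term you wrote; squaring it up by a second Young produces a constant whose time integral grows like $T$. The fix, which is exactly what the paper does, is to use Young with exponents $4/3$ and $4$: since $|(u-\util)_x(t,0)|\le C\varepsilon^{1/2}\|(u-\util)_{xx}\|_{L^2}^{1/2}$, one gets
\[
|\mathcal{P}_2|\le C\alpha(t)^{4/3}+C\varepsilon^2\|(u-\util)_{xx}\|_{L^2}^2,
\]
and the $\varepsilon$'s cancel in the first term. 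Note also that your fallback justification --- that ``$C$ is allowed to depend on $\varepsilon$'' --- is not permissible here: the paper's convention (stated at the start of Section~4) requires $C$ to be independent of $\delta,\varepsilon,\beta,T$, and allowing $C\sim\varepsilon^{-2}$ would force $\beta$ to depend on $\varepsilon$, contradicting the setup of Theorems~\ref{thm:outflow}--\ref{thm:impermeable} where $\beta$ depends only on $\delta_0$.
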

\begin{proof}
Since $u_- \leq 0$, $\mathcal{P}_1$  can be estimated as follows:
\begin{align*}
	\int_{0}^{t} \mathcal{P}_1 \,d\tau \leq Cu_- 	\int_{0}^{t} \left(\bar{\theta} \eta(U|\bar{U})\right)|_{x=0} \,d\tau .
\end{align*}
	Recall that $\sigma > 0$ for both boundary value problems and that, from \eqref{bddx12},
	\begin{equation*}
		-\sigma t -\bX (t) -\beta \le - \frac{\sigma}{2} t -\beta <0, \quad t\le T.
	\end{equation*}
	This, together with \eqref{shock_prop} implies
	\begin{align}\label{est_boundary}
		\begin{aligned}
			& |(\bar{u}(t,0)- u_-,\bar{\theta}(t,0)- \theta_-)| \leq C\delta e^{-C\delta|-\sigma t-\mathbf{X}(t) -\beta|} \leq C\delta e^{-C\delta t} e^{-C\delta \beta}.
		\end{aligned}
	\end{align}
	By Interpolation inequality, Young's inequality, \eqref{apriori_small} and \eqref{est_boundary}, we have
	\begin{equation*}
		\begin{aligned}
            \int_{0}^{t} \mathcal{P}_2 \,d\tau& \leq \left| \int_{0}^{t}  \left(\mu a (u-\bar{u})(u-\bar{u})_x\right)|_{x=0} \,d\tau \right|   \leq C\int_{0}^{t}   |\bar{u}(\tau,0)- u_-| \|(u-\bar{u})_x\|_{L^{\infty}(\mathbb{R_+})}  \,d\tau  \\
            & \leq C\int_{0}^{t} |\bar{u}(\tau,0)- u_-|^{\frac{4}{3}}  \,d\tau  + \int_{0}^{t} \|(u-\bar{u})_x\|^2_{L^{2}(\mathbb{R_+})}\|(u-\bar{u})_{xx}\|^2_{L^{2}(\mathbb{R_+})}  \,d\tau  \\
            & \leq C e^{-C\delta \beta} + C\varepsilon^2  \int_{0}^{t} \|(u-\bar{u})_{xx}\|^2_{L^{2}(\mathbb{R_+})}  \,d\tau.
		\end{aligned}
	\end{equation*}
	Similarly, 
	\begin{align*}
		& \int_{0}^{t} \mathcal{P}_3 \,d\tau \leq C e^{-C\delta \beta} + C\varepsilon^2  \int_{0}^{t} \|(\theta-\bar{\theta})_{xx}\|^2_{L^{2}(\mathbb{R_+})}  \,d\tau.
	\end{align*}
	Using \eqref{shock_prop} and \eqref{est_boundary}, we have 
	
	\begin{align*}
		\begin{aligned}
			 & \int_{0}^{t} \mathcal{P}_4 \,d\tau \leq C\|u-\bar{u}\|_{L^{\infty}(\mathbb{R_+})} \int_{0}^{t}  |\bar{\theta}(\tau,0)- \theta_-| \,d\tau \leq C e^{-C\delta \beta}, \\
			& \int_{0}^{t} \mathcal{P}_5 \,d\tau \leq C\|\rho-\bar{\rho}\|_{L^{\infty}(\mathbb{R_+})} \int_{0}^{t}  |\bar{u}(\tau,0)- u_-| \,d\tau \leq C e^{-C\delta \beta}.
		\end{aligned}
	\end{align*}
Combining the above estimates together, we complete the proof of Lemma \ref{lem:boundary}.
\end{proof}

\subsection{Proof of Lemma \ref{lem:main}}
Combining \eqref{est:twoparts}-\eqref{est:B3-5} and using the smallness of $\varepsilon, \delta$, we have 
\begin{align*}
	\begin{aligned}
			\frac{d}{dt} \int_{\mathbb{R}_+} a\bar{\theta} \eta(U|\bar{U}) \,dx  
		\leq &   -\frac{1}{4}(\mathbf{G}_1 + \mathbf{G}_2) - \frac{C_*}{2}\mathbf{G}^S  -\frac{\delta}{4M}|\dot{\mathbf{X}}|^2 - \frac{1}{10}\mathbf{D}+ \mathcal{P}(U).
	\end{aligned}
\end{align*}
Integrating the above inequality over $[0,t]$ for any $t\leq T$, we obtain
\begin{align*}
	\begin{aligned}
	& 	\int_{\mathbb{R}_+} \eta(U(t,x)|\bar{U}(t,x)) \,dx  + \delta \int_{0}^{t}|\dot{\mathbf{X}}(\tau)|^2 \,d\tau + \int_{0}^{t} \left(\mathbf{G}_1 + \mathbf{G}_2 + \mathbf{G}^S+ \mathbf{D} \right) \,d\tau  \\
		&\quad \leq C \int_{\mathbb{R}_+}\eta(U(0,x)|\bar{U}(0,x)) \,dx +C \int_{0}^{t} \mathcal{P} \,d\tau.
	\end{aligned}
\end{align*}
Then from Lemma \ref{lem:boundary} and the facts
\[	\lVert U-\bar{U} \rVert^2_{L^2(\mathbb{R}_+)} \sim \int_{\mathbb{R}_+} \eta(U|\bar{U}) \,dx, \quad D_{u_1} \sim \mathbf{D}_{u_1}, \quad D_{\theta_1} \sim \mathbf{D}_{\theta_1} , \quad \forall t \in [0,T],\]
we have 
\begin{align*}
	\begin{aligned}
		& 	\lVert U(t,\cdot)-\bar{U}(t,\cdot) \rVert^2_{L^2(\mathbb{R}_+)}  + \delta \int_{0}^{t}|\dot{\mathbf{X}}(\tau)|^2 \,d\tau + \int_{0}^{t} \left(\mathbf{G}_1 + \mathbf{G}_2 + \mathbf{G}^S+ D_{u_1} + D_{\theta_1} \right) \,d\tau   \\
		& \qquad \qquad + |u_-|\int_{0}^{t} |(\rho - \rhotil, u-\util, \th-\thtil)(\tau,0)|^2 \,d\tau \\
		& \quad \leq  C	\lVert U(0,\cdot)-\bar{U}(0,\cdot) \rVert^2_{L^2(\mathbb{R}_+)} +C\varepsilon^2\left(\int_{0}^t\|(u-\bar{u})_{xx}\|_{L^2(\mathbb{R}+)}^{2}+ \int_{0}^{t} \|(\theta-\bar{\theta})_{xx}\|^2_{L^{2}(\mathbb{R_+})}\right)\,d\tau  \\
		&\qquad + Ce^{-C\delta \beta}, 
	\end{aligned}
\end{align*}
which completes the proof of Lemma \ref{lem:main}.

\section{Higher order estimates}
\setcounter{equation}{0}
In this section, we provide $H^1$-estimates and then prove Propositions \ref{prop:main} and \ref{prop:main2}. For notational convenience, we denote $(\phi, \psi, \vartheta)(t,x)$ as
\[(\phi, \psi, \vartheta)(t,x): = (\rho -\rhotil, u-\util, \theta - \thtil)(t,x).\]
Using \eqref{eq:NSF} and \eqref{eq:NSFs}, we have the equations for $(\phi, \psi, \vartheta)$ given by 
\begin{equation}\label{eq:pertubation2}
	\begin{cases}
		&\phi_t + u\phi_x + \rho \psi_x = f + \bar{\rho}_x\dot{\mathbf{X}},\\
		&\rho \left(\psi_t + u\psi_x\right) + R\theta \phi_x -\mu\psi _{xx} = g+  \rho \bar{u}_x\dot{\mathbf{X}} ,\\
		&\frac{R}{\gamma-1} \rho \vartheta_t - \kappa \vartheta_{xx} = h + \frac{R}{\gamma-1} \rho  \bar{\theta}_x  \dot{\mathbf{X}}, 
	\end{cases}
\end{equation}
where 
\[
f := -\left(\bar{u}_x\phi + \bar{\rho}_x \psi \right), \quad g :=  -\rho \bar{u}_x\psi - R\phi \theta_x -R\bar{\rho}_x \vartheta - R\bar{\rho} \vartheta_x + R\frac{\bar{\rho}_x}{\bar{\rho}}\bar{\theta}\phi + R\bar{\theta}_x\phi - \mu \frac{\phi}{\bar{\rho}}\bar{u}_{xx} ,
\]
and 
\[
h:= -\frac{\kappa}{\rhotil} \bar{\theta}_{xx} \phi- \frac{\mu}{\bar{\rho}}(\bar{u}_x)^2 \phi  -\frac{R}{\gamma-1} \rho u\vartheta_x - \frac{R}{\gamma-1} \bar{\theta}_x   \rho \psi- p\psi_x - R \bar{u}_x \rho\vartheta  + \mu(u^2_x - \bar{u}^2_x).
\]
Observe that the functions $f, g$, and $h$ can be estimated as follows:
\begin{align}
	|f| &\leq C|\bar{u}_x||(\phi, \psi)|, \label{est:f} \\
	|g| &\leq C\left(|\bar{u}_x|\left|(\phi, \psi, \vartheta)\right| + |\vartheta_x|(|\phi|+|\rhotil|) + |\bar{u}_{xx} ||\phi|\right), \label{est:g}
\end{align}
and
\begin{equation}\label{est:h}
	|h| \leq C\left(|\bar{u}_x| |(\phi, \psi, \vartheta) | +  |(\psi_x, \vartheta_x)|+|\psi_x|^2\right).
\end{equation}

\subsection{$H^1$-estimates for $\rho - \rhotil$}
In this subsection, we derive higher-order estimates for the density perturbation $\rho - \rhotil$. When controlling the boundary terms, we treat the outflow and impermeable cases separately (see \eqref{est:H1bd}).
\begin{lemma}\label{lem:rhox}
		Under the hypothesis of Proposition \ref{prop:main} (or Proposition \ref{prop:main2}), there exists a positive constant $C$ such that
	\begin{align*}
	\begin{aligned}
		&\sup_{t \in [0,T]}\lVert \phi_x \rVert^2_{L^2(\mathbb{R}_+)} + \int_{0}^{T} \lVert \phi_x \rVert^2_{L^2(\mathbb{R}_+)}  \,dt + |u_-|\int_{0}^{T} \left|\phi_x(t,0)\right|^2 \,dt \\
		&\qquad \leq C\left(\lVert \phi_{0x} \rVert^2_{L^2(\mathbb{R}_+)} + \lVert \psi_0 \rVert^2_{L^2(\mathbb{R}_+)} +\lVert \psi \rVert^2_{L^2(\mathbb{R}_+)}\right) \\
		&\phantom{\qquad \leq \lVert \phi_{0x} \rVert^2_{L^2(\mathbb{R}_+)}} + C\int_0^T (D_{u_1} + D_{\th_1} + \mathbf{G}^S)\,dt + C\e \int_0^T D_{u_2}\,dt + Ce^{-C\delta \beta},
	\end{aligned}
\end{align*}
where $D_{u_1}, D_{u_2}, D_{\th_1}$  and $\mathbf{G}^S$ are the terms defined in \eqref{good_terms}.
\end{lemma}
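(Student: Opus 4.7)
The plan is to use the momentum equation as the source of dissipation for $\phi_x$, in the spirit of the classical effective viscous flux argument. I first differentiate the continuity equation $\eqref{eq:pertubation2}_1$ in $x$, multiply the resulting identity by $\phi_x$, and integrate over $\mathbb{R}_+$. The transport piece recombines after integration by parts into $\tfrac12\frac{d}{dt}\|\phi_x\|_{L^2}^2+\tfrac12\int u_x\phi_x^2\,dx-\tfrac12 u_-|\phi_x(t,0)|^2$, and since $u_-\le 0$, the boundary trace supplies exactly the good term $\tfrac12|u_-||\phi_x(t,0)|^2$ appearing on the left-hand side of the conclusion (and trivially vanishing in the impermeable case).

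The key step is to substitute the viscous term from $\eqref{eq:pertubation2}_2$,
\[
\mu\psi_{xx}=\rho(\psi_t+u\psi_x)+R\theta\phi_x-g-\rho\bar u_x\dot{\mathbf X},
\]
into the remaining $\int\rho\psi_{xx}\phi_x\,dx$. This produces the crucial interior dissipation $\tfrac{R}{\mu}\int\rho\theta\phi_x^2\,dx\gtrsim\|\phi_x\|_{L^2}^2$, together with one awkward cross term $\tfrac{1}{\mu}\int\rho^2\psi_t\phi_x\,dx$. I handle the latter by integration by parts in time,
\[
\int\rho^2\psi_t\phi_x\,dx=\frac{d}{dt}\int\rho^2\psi\phi_x\,dx-\int(\rho^2)_t\,\psi\phi_x\,dx-\int\rho^2\psi\phi_{xt}\,dx,
\]
where $\phi_{xt}$ is eliminated using the $x$-differentiated continuity equation and $(\rho^2)_t$ using $\eqref{eq:pertubation2}_1$. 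Placing the total-derivative correction $-\tfrac{1}{\mu}\int\rho^2\psi\phi_x\,dx$ on the left and using Young's inequality yields $\tfrac14\|\phi_x\|_{L^2}^2+C\|\psi\|_{L^2}^2$ (with the $t=0$ boundary producing $C(\|\phi_{0x}\|_{L^2}^2+\|\psi_0\|_{L^2}^2)$); this is precisely the source of the $\|\psi\|_{L^2}^2$ and $\|\psi_0\|_{L^2}^2$ terms in the statement.

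All remaining forcing terms are estimated via $\eqref{est:f}$, $\eqref{est:g}$, the shock bounds \eqref{shock_prop}, the shift bound \eqref{bddx12}, and the a priori smallness \eqref{apriori_small}. Contributions of the form $\int|\bar u_x|\,|(\phi,\psi,\vartheta)|\,|\phi_x|\,dx$ and $\int|\bar u_{xx}||\phi||\phi_x|\,dx$ (using $|\bar u_{xx}|\le C\delta|\bar u_x|$) are absorbed into $\mathbf{G}^S$ plus a small multiple of $\|\phi_x\|_{L^2}^2$; terms involving $|\vartheta_x|$ reduce to $D_{\theta_1}$ plus a small multiple of $\|\phi_x\|_{L^2}^2$. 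The most delicate nonlinearity, $\int\psi_x\phi_x^2\,dx$, is estimated by $\|\psi_x\|_{L^\infty}\|\phi_x\|_{L^2}^2\le C\|\psi_x\|_{H^1}\|\phi_x\|_{L^2}^2\le C\varepsilon(D_{u_1}+D_{u_2})$ using the 1D Sobolev embedding together with the a priori bound $\|\phi_x\|_{L^2}\le\varepsilon$; this accounts for the $\varepsilon\int_0^T D_{u_2}\,d\tau$ term on the right.

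The principal obstacle is the careful bookkeeping of the boundary traces generated by the two spatial and one temporal integrations by parts. In particular, the $\int\rho^2\psi\phi_{xt}\,dx$ rewrite produces, after integration by parts in $x$, traces at $x=0$ such as $\rho^2(t,0)\psi(t,0)u_-\phi_x(t,0)$ and $\rho^3(t,0)\psi(t,0)\psi_x(t,0)$; by Young's inequality these split into a small piece absorbed into the good boundary dissipation $|u_-||\phi_x(t,0)|^2$ and pieces proportional to $|u_-||\psi(t,0)|^2$, $|\bar u(t,0)-u_-|^2$, and $|\bar\rho(t,0)-\rho_-|^2$. The first is controlled by the boundary dissipation already produced in Lemma \ref{lem:main}, while the remaining two are $O(e^{-C\delta\beta})$ by \eqref{est_boundary} and the analogous exponential smallness for $\bar\rho(t,0)-\rho_-$, ultimately producing the $Ce^{-C\delta\beta}$ contribution. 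In the impermeable case all boundary terms involving $u$ vanish identically at $x=0$, simplifying the argument substantially. Integrating the resulting differential inequality over $[0,T]$ then yields Lemma \ref{lem:rhox}.
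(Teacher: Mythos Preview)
Your overall strategy is the same as the paper's: couple the $x$-differentiated continuity equation with the momentum equation so that the $R\theta\phi_x$ term supplies interior dissipation for $\phi_x$, while the transport boundary contribution gives the good trace $|u_-||\phi_x(t,0)|^2$. The paper organizes this slightly differently --- it first divides by $\rho$ to work with $\phi_x/\rho$, multiplies \emph{both} equations by $\phi_x/\rho$, and sums; this avoids the extra $\rho^2$ factors you carry and reduces the boundary bookkeeping to the single trace $(\psi\phi_t)(t,0)$, into which the continuity equation is substituted once. Your route with $\rho^2\psi_t\phi_x$ and a further $\phi_{xt}$ substitution is a legitimate variant, but it generates more traces and more terms involving $\rho_x$, $\rho_t$; these can all be closed, just less cleanly.

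There is, however, a genuine gap in your boundary treatment. The trace $\rho^3(t,0)\psi(t,0)\psi_x(t,0)$ does \emph{not} split by Young into pieces proportional to $|u_-||\psi(t,0)|^2$, $|\bar u(t,0)-u_-|^2$, or $|\bar\rho(t,0)-\rho_-|^2$: the factor $\psi_x(t,0)$ is not a priori exponentially small, and you have no boundary dissipation for $\psi_x$. The correct step (and the one the paper uses) is the interpolation $|\psi_x(t,0)|\le \|\psi_x\|_{L^\infty}\le C\|\psi_x\|_{L^2}^{1/2}\|\psi_{xx}\|_{L^2}^{1/2}$ followed by Young with exponents $(4/3,4)$:
\[
|\psi(t,0)|\,\|\psi_x\|_{L^2}^{1/2}\|\psi_{xx}\|_{L^2}^{1/2}\le C|\psi(t,0)|^{4/3}+C\|\psi_x\|_{L^2}^{2}\|\psi_{xx}\|_{L^2}^{2}\le Ce^{-C\delta\beta}e^{-C\delta t}+C\varepsilon^2 D_{u_2}.
\]
This is precisely the origin of the $C\varepsilon\int_0^T D_{u_2}\,d\tau$ term coming from the boundary; in your write-up that contribution is unaccounted for. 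A minor related point: you need not invoke the boundary dissipation from Lemma~\ref{lem:main} to control $|u_-||\psi(t,0)|^2$ --- that quantity is already $O(e^{-C\delta\beta})$ because $\psi(t,0)=u_--\bar u(t,0)$ and $|\bar u(t,0)-u_-|\le C\delta e^{-C\delta\beta}e^{-C\delta t}$ by \eqref{est_boundary}.
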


\begin{proof}

	Differentiating $\eqref{eq:pertubation2}_1$ with $x$ and then dividing the resulting equation by $\rho$, we have 
	\begin{align}\label{eq:phix}
 \left(\frac{\phi_x}{\rho}\right)_t + u	\left(\frac{\phi_x}{\rho}\right)_x + \psi_{xx}  = \frac{f_x- \rho_x\psi_x}{\rho} + \frac{\bar{\rho}_{xx}}{\rho}\dot{\mathbf{X}} =: F +  \frac{\bar{\rho}_{xx}}{\rho}\dot{\mathbf{X}}.
	\end{align}
We first observe that
\[
	f_x = -\left(\bar{u}_x\phi + \bar{\rho}_x \psi \right)_x= -\left(\bar{u}_{xx}\phi +\bar{u}_{x}\phi _x + \bar{\rho}_{xx}\psi + \bar{\rho}_{x}\psi_x\right).
\]
This implies that 
\begin{align}\label{est:F}
	|F| \leq C \left(|(\phi , \psi )||\bar{u}_{xx}| + |(\phi_x, \psi_x)||\bar{u}_x| + |\phi_x||\psi_x|\right).
\end{align}

	Multiplying the equation \eqref{eq:phix} by $\frac{\phi_x}{\rho}$, integrating the resultings equality over $[0,t]\times \mathbb{R}_+$ and after integrating by parts, we obtain
	\begin{align}\label{eq:pertubationint}
		\begin{aligned}
			& 	\frac{1}{2} \int_{\mathbb{R}_+} \left(\frac{\phi_x}{\rho}\right)^2 (t,x)\,dx - \int_{0}^{t}\int_{\mathbb{R}_+} \frac{1}{2}u_x  \left(\frac{\phi_x}{\rho}\right)^2  \,dx\,d\tau + \frac{|u_-|}{2}\int_{0}^{t}  \left(\frac{\phi_x}{\rho}\right)^2 (\tau, 0)\,d\tau \\
			&  + \int_{0}^{t}\int_{\mathbb{R}_+} \frac{\phi_x\psi_{xx}}{\rho} \,dx\,d\tau = \frac{1}{2} \int_{\mathbb{R}_+} \left(\frac{\phi_x}{\rho}\right)^2 (0,x)\,dx + \int_{0}^{t}\int_{\mathbb{R}_+} F \frac{\phi_x}{\rho}  \,dx\,d\tau + \int_{0}^{t}\int_{\mathbb{R}_+} \frac{\bar{\rho}_{xx}}{\rho}\dot{\mathbf{X}}\frac{\phi_x}{\rho}dxd\tau.
		\end{aligned}
	\end{align}
	
	Next, we eliminate the last term on the left-hand side of \eqref{eq:pertubationint} by using $\eqref{eq:pertubation2}_2$. 
	Multiplying  $\eqref{eq:pertubation2}_2$ by $\frac{\phi_x}{\rho}$ yields that 
	
	\begin{align}\label{eq:psiphi}
		\left(\psi_t + u\psi_x\right) \phi_x + \frac{R\theta }{\rho}\phi^2_x - \frac{\mu\psi _{xx} \phi_x}{\rho}=  g \frac{\phi_x}{\rho}+  \phi_x\bar{u}_x\dot{\mathbf{X}},
	\end{align}
	and using the identity 
	\[
	\intRp \psi_t \phi_x \,dx  =  \intRp (\psi\phi_x)_t \,dx - \intRp  \psi \phi_{xt} \,dx =\intRp(\psi\phi_x)_t\,dx  - (\psi\phi_t )(t,0)  + \intRp \psi_x\phi_t \,dx,
	\]
	and integrating the resulting equality \eqref{eq:psiphi} over $[0,t]\times \mathbb{R}_+$, we have
	\begin{align}\label{eq:pertubpsiphi}
		\begin{aligned}
				& \int_{\mathbb{R}_+} \psi \phi_x  \,dx + \int_{0}^{t} \int_{\mathbb{R}_+} \left(\phi_t + u\phi_x\right) \psi_x \,dx\,d\tau +  \int_{0}^{t} \int_{\mathbb{R}_+} \frac{R\theta }{\rho}\phi^2_x \,dx\,d\tau -\mu  \int_{0}^{t} \int_{\mathbb{R}_+}\frac{\psi _{xx} \phi_x}{\rho} \,dx\,d\tau   \\
			& = \int_{\mathbb{R}_+} \psi_0 \phi_{0x}\,dx + \int_{0}^{t}\int_{\mathbb{R}_+} g \frac{\phi_x}{\rho} \,dx\,d\tau +  \int_{0}^{t}\int_{\mathbb{R}_+}\phi_x\bar{u}_x\dot{\mathbf{X}} \,dx\,d\tau + \int_{0}^{t} (\psi \phi_t)(\tau,0) \,d\tau.
		\end{aligned}
	\end{align}
	By substituting $\phi_t + u\phi_x = -\rho \psi_x + f + \bar{\rho}_x\dot{\mathbf{X}}$, which follows from $\eqref{eq:pertubation2}_1$, into \eqref{eq:pertubpsiphi}, multiplying \eqref{eq:pertubationint} by $\mu$, and summing the two resulting equations, we obtain
	\begin{align}\label{eq:sum1}
		\begin{aligned}
		& 	\frac{\mu}{2}\left\lVert \left(\frac{\phi_x}{\rho}\right)(t, \cdot) 
		\right\rVert^2_{L^2(\mathbb{R}_+)} +  \int_{\mathbb{R}_+} \psi \phi_x  \,dx   + \frac{\mu}{2}|u_-|\int_{0}^{t}  \left(\frac{\phi_x}{\rho}\right)^2 (\tau, 0)\,d\tau +  \int_{0}^{t} \int_{\mathbb{R}_+} \frac{R\theta }{\rho}\phi^2_x \,dx\,d\tau \\
		&\, \,  = \frac{\mu}{2}\left\lVert \frac{\phi_{0x}}{\rho_0} \right\rVert^2_{L^2(\mathbb{R}_+)}   +  \int_{\mathbb{R}_+} \psi_0 \phi_{0x}  \,dx +\int_{0}^{t}\int_{\mathbb{R}_+} \rho |\psi_x|^2 \,dx\,d\tau + \frac{\mu}{2}\int_{0}^{t}\int_{\mathbb{R}_+}u_x  \left(\frac{\phi_x}{\rho}\right)^2  \,dx\,d\tau \\
		&\quad  \, \, - \int_0^t \int_{\mathbb{R}_+} f \psi_x dx d\tau  + \int_{0}^{t}\int_{\mathbb{R}_+} \left(\mu F + g\right) \frac{\phi_x}{\rho}  \,dx\,d\tau   + \int_{0}^{t}\int_{\mathbb{R}_+}\left(\frac{\bar{\rho}_{xx}}{\rho}\frac{\phi_x}{\rho}+ \bar{u}_x\phi_x - \bar{\rho}_x \psi_x \right)  \dot{\mathbf{X}} (\tau) dx d\tau \\
		&\quad \, \, +  \int_{0}^{t} (\psi \phi_t)(\tau,0) \,d\tau =:\frac{\mu}{2}\left\lVert \frac{\phi_{0x}}{\rho_0} \right\rVert^2_{L^2(\mathbb{R}_+)}   +  \int_{\mathbb{R}_+} \psi_0 \phi_{0x}\,dx + \sum_{i=1}^6 \int_0^t I_i\,d\tau.
		\end{aligned}
	\end{align}
	First, by virtue of Young's inequality, observe that 
	\begin{align*}
		\left| \int_{\mathbb{R}_+} \psi \phi_x  \,dx \right| \leq \nu\lVert \phi_x \rVert^2_{L^2(\mathbb{R}_+)} + C_{\nu}\lVert \psi \rVert^2_{L^2(\mathbb{R}_+)},
	\end{align*}
where $\nu>0$ is a constant to be chosen sufficiently small later, and $C_\nu$ denotes a constant depending on $\nu$.

Next, Cauchy-Schwarz inequality implies
	\begin{align*}
			\left|\int_{\mathbb{R}_+} \psi_0 \phi_{0x}  \,dx\right|  \leq C\left(\lVert \psi_0 \rVert^2_{L^2(\mathbb{R}_+)}  +\lVert \phi_{0x} \rVert^2_{L^2(\mathbb{R}_+)} \right) .
	\end{align*}
Since $\rho$ has a strictly positive lower bound, we obtain
\begin{align*}
	\int_{0}^{t} \int_{\mathbb{R}_+} \frac{R\theta }{\rho}\phi^2_x \,dx\,d\tau  \geq c\int_{0}^{t} \lVert \phi_x \rVert^2_{L^2(\mathbb{R}_+)}  \,d\tau,
\end{align*}
for some constant $c>0$. Now, we estimate terms $I_1, \cdots, I_6$ in \eqref{eq:sum1}. First of all, we find that
	\begin{align*}
			\left|\int_0^t I_1\,d\tau\right| = \left|\int_{0}^{t}\int_{\mathbb{R}_+} \rho |\psi_x|^2 \,dx\,d\tau  \right|\leq C\int_{0}^{t} \lVert \psi_x \rVert^2_{L^2(\mathbb{R}_+)}  \,d\tau.
	\end{align*}
Using $u_x = \psi_x + \bar{u}_x$, Lemma \ref{lem:viscous_shock} and Sobolev embedding, we obtain
\begin{equation}\label{est:H1I2}
\begin{aligned}
	&\left|\int_0^t I_2\,d\tau \right| \leq C\left|\int_{0}^{t}\int_{\mathbb{R}_+}u_x  \left(\frac{\phi_x}{\rho}\right)^2  \,dx\,d\tau \right| \leq C \int_{0}^{t}\int_{\mathbb{R}_+} |\util_x||\phi_x|^2 \,dx\,d\tau + C\int_{0}^{t}\int_{\mathbb{R}_+} |\psi_x||\phi_x|^2\,dx\,d\tau \\
	&\quad \leq C \int_{0}^{t}\int_{\mathbb{R}_+} |\util_x||\phi_x|^2 \,dx\,d\tau + C\int_{0}^{t}\|\psi_x\|_{L^\infty(\Rp)}\|\phi_x\|_{L^2(\Rp)}^2\,d\tau\\
	&\quad  \leq C\delta\int_{0}^{t} \lVert \phi_x \rVert^2_{L^2(\mathbb{R}_+)} \,d\tau  + C\e\int_{0}^{t} \|\psi_x \|_{H^1(\mathbb{R}_+)} \|\phi_x\|_{L^2(\mathbb{R}_+)} \,d\tau  \\
	&\quad  \leq C\delta\int_{0}^{t} \lVert \phi_x \rVert^2_{L^2(\mathbb{R}_+)} \,d\tau  + C\varepsilon\left( \int_{0}^{t} \lVert \psi_x \rVert^2_{H^1(\mathbb{R}_+)} \,d\tau + \int_{0}^{t} \lVert \phi_x \rVert^2_{L^2(\mathbb{R}_+)} \,d\tau  \right).
\end{aligned}
\end{equation}
	Now, from \eqref{shock_prop}, \eqref{est:f} and Young's inequality, we have
	\begin{align*}
		\left|\int_0^t I_3\,d\tau \right| \leq\left|\int_0^t \int_{\mathbb{R}_+} f \psi_x dx d\tau \right| &\leq \int_0^t \int_{\mathbb{R}_+} |\ubar_x||(\phi,\psi)||\psi_x| dx d\tau \leq C\mathbf{G}^S + C\d \mathbf{D}_{u_2}.
	\end{align*}
	For $I_4$, by virtue of \eqref{est:F}, \eqref{est:g}, Lemma \ref{lem:viscous_shock} and Young's inequality, we obtain
	\begin{align*}
		\left|\int_0^t I_4 \,d\tau\right| &\leq \int_0^t \intRp |\phi_x|\Big[|(\util_x, \util_{xx})||(\phi,\psi,\vartheta)| + |\util_x||(\phi_x,\psi_x)| + |\phi_x||\psi_x| + |\vartheta_x|(|\phi| + |\rhotil|) \Big]\,dx\,d\tau\\
		&\leq C\delta \int_0^t \big(\mathbf{G}^S + \|\phi_x\|_{L^2(\Rp)}^2 + \|\psi_x\|_{L^2(\Rp)}^2 \big) \,d\tau +\int_0^t \intRp |\phi_x|^2|\psi_x|\,dx\,d\tau\\
		&\qquad  + \nu \int_0^t \|\phi_x\|_{L^2(\Rp)}^2\,d\tau + C_\nu \int_0^t \|\vartheta_x\|_{L^2(\Rp)}^2\,d\tau.
	\end{align*}
	Here, $\nu>0$ is a small constant (to be chosen sufficiently small later), and $C_\nu$ denotes a constant depending on $\nu$. From \eqref{est:H1I2}, we find that
	\[
	\int_0^t \intRp |\phi_x|^2|\psi_x|\,dx \,d\tau \leq C\e \left( \int_0^t \big(D_{u_1} + D_{u_2} + \|\phi_x\|_{L^2(\Rp)}^2\big)\,d\tau \right)
	\]
	Thus, we obtain 
	\[
	\left|\int_0^t I_4 \,d\tau \right| \leq C(\delta + \e + \nu)\int_0^t \|\phi_x\|_{L^2(\Rp)}^2\,d\tau + C\e \int_0^t \mathcal{D}_{u_2}\,d\tau + C\int_0^t \big(\delta \mathbf{G}^S + \mathcal{D}_{u_1} + \mathcal{D}_{\th_1})\,d\tau.
	\]
	To estimate $I_5$, we use Lemma \ref{lem:viscous_shock} and Young's inequality to have
	\begin{align*}
		\begin{aligned}
			&\left|\int_0^t I_5\,d\tau \right| \leq	\left|\int_{0}^{t}\int_{\mathbb{R}_+}\left(\frac{\bar{\rho}_{xx}}{\rho}\frac{\phi_x}{\rho}+ \bar{u}_x\phi_x - \bar{\rho}_x \psi_x \right)  \dot{\mathbf{X}} (\tau) \,d\tau \right|  \\
			& \leq C\delta \int_{0}^{t}\int_{\mathbb{R}_+}|\bar{u}_{x}| |\phi_x|  |\dot{\mathbf{X}} (\tau)|  \,dx\,d\tau +  \int_{0}^{t}\int_{\mathbb{R}_+} |\bar{u}_x| |(\phi_x, \psi_x)|  |\dot{\mathbf{X}} (\tau)| \,dx\,d\tau  \\
			& \leq C \int_{0}^{t}\int_{\mathbb{R}_+} |\bar{u}_x| \big(|\phi_x|^2 + |\psi_x|^2\big) \,dx\,d\tau + \int_{0}^{t}\int_{\mathbb{R}_+} |\bar{u}_x| |\dot{\mathbf{X}} (\tau)|^2 \,dx\,d\tau  \\
			& \leq C\delta \int_{0}^{t} \lVert \phi_x \rVert^2_{L^2(\mathbb{R}_+)} \,d\tau  + C\delta \int_{0}^{t} \lVert \psi_x \rVert^2_{L^2(\mathbb{R}_+)} \,d\tau + C\delta  \int_{0}^{t} |\dot{\mathbf{X}} (\tau)|^2 \,dx\,d\tau  .
		\end{aligned}
	\end{align*}
	Finally, we estimate the boundary term  $\int_0^t I_6\,d\tau = \int_{0}^{t} (\psi \phi_t)(\tau,0) \,d\tau$. To this end, we use the mass equation \eqref{eq:pertubation2}$_1$: 
	\[
	\phi_t + u\phi_x + \rho \psi_x = -\left(\bar{u}_x\phi + \bar{\rho}_x \psi \right) + \bar{\rho}_x\dot{\mathbf{X}}.
	\]
	Substituting this into the integrand gives
	\begin{equation}\label{est:H1bd}
		\begin{aligned}
			\left|\int_0^t I_6 \,d\tau\right| \leq \left|\int_0^t \underbrace{(u \phi_x \psi)(\tau,0)}_{=:\mathcal{I}}\,d\tau\right| + \left|\int_0^t (-\rho \psi_x \psi - \util_x \phi \psi - \rhotil_x \psi^2 + \rhotil_x \psi \dot{X})(\tau,0)\,d\tau\right|.
		\end{aligned}
	\end{equation}
	For the term $\mathcal{I}$, we consider the outflow and impermeable cases separately, as follows. 
	
	\noindent $\bullet$ \textit{Case I) Outflow problem}

	In the outflow setting, using Young's inequality and Lemma \ref{lem:viscous_shock}, we have
	\begin{align*}
		\begin{aligned}
			\left|	\int_{0}^{t} \mathcal{I}\,d\tau \right|  \leq \nu |u_-|\int_{0}^{t} \left|\phi_x|_{x=0}\right|^2 \,d\tau + C_{\nu} \int_{0}^{t} \left|\psi(\tau,0)\right|^2 \,d\tau   \leq \nu |u_-|\int_{0}^{t} \left|\phi_x(\tau,0)\right|^2 \,d\tau + Ce^{-C\delta \beta}.
		\end{aligned}
	\end{align*}
	Here, $\nu>0$ is a constant to be chosen sufficiently small later, and $C_\nu$ denotes a constant depending on $\nu$.

	\noindent $\bullet$ \textit{Case II) Impermeable wall problem}

	For the impermeable wall problem, thanks to the boundary condition $u_- = 0$, we find that 
	\[
	\left|\int_0^t \mathcal{I} \,d\tau\right| = \left| \int_0^t (u \phi_x \psi)(\tau,0) \,d\tau\right| = 0.
	\]
	Now, it remains to control the remaining part of $I_6$. By virtue of interpolation inequality, Young's inequality, and \eqref{apriori_small}, we get
	\begin{align*}
		\begin{aligned}
		\left|\int_{0}^{t} (\rho \psi_x \psi) (\tau,0) \,d\tau \right|  & \leq C\int_{0}^{t} \left|\psi (\tau,0)\right| \lVert \psi_x\rVert_{L^{\infty}({\mathbb{R}_{+})}}  \,d\tau   \\
		& \leq C\int_{0}^{t} \left|\psi (\tau,0)\right|^{\frac{4}{3}}  \,d\tau + C\int_{0}^{t} \lVert \psi_x \rVert^2_{L^2(\mathbb{R}_+)}\lVert \psi_{xx} \rVert^2_{L^2(\mathbb{R}_+)} \,d\tau   \\
		& \leq Ce^{-C\delta \beta} + C\varepsilon^2 \int_{0}^{t} D_{u_2} \,d\tau.
		\end{aligned}
	\end{align*}
	Similarly, we have
	\begin{align*}
	\left|	\int_{0}^{t}  (\bar{u}_x\phi \psi )(\tau,0) \,d\tau, \int_{0}^{t}  (\bar{\rho}_x\psi^2)(\tau,0) \,d\tau, \int_{0}^{t}  (\bar{\rho}_x \psi \dot{\mathbf{X}})(\tau,0) \,d\tau \right| \leq C \int_0^t |\psi(\tau,0)|d\tau \leq Ce^{-C\delta \beta}.
	\end{align*}
	Therefore, we obtain the followings:\\
	$\bullet$ \textit{Case I) Outflow problem}
	\begin{align*} 
		 \left|\int_0^t I_6\,d\tau\right| = \left|\int_{0}^{t} (\psi \phi_t)(\tau,0) \,d\tau \right| \leq \nu |u_-|\int_{0}^{t} \left|\phi_x(\tau,0)\right|^2 \,d\tau + C\varepsilon^2 \int_{0}^{t} D_{u_2} \,d\tau + Ce^{-C\delta \beta},
	\end{align*}
	$\bullet$ \textit{Case II) Impermeable wall problem}
	\begin{align*} 
		 \left|\int_0^t I_6\,d\tau\right| = \left|\int_{0}^{t} (\psi \phi_t)(\tau,0) \,d\tau \right| \leq C\varepsilon^2 \int_{0}^{t} D_{u_2}\,d\tau + Ce^{-C\delta \beta}.
	\end{align*}
	Substituting all the above estimates into \eqref{eq:sum1} and using the smallness of $\delta$ and $\varepsilon$, with $\nu>0$ chosen sufficiently small, we obtain
	\begin{align*}
		\begin{aligned}
		&\lVert \phi_x \rVert^2_{L^2(\mathbb{R}_+)} + \int_{0}^{t} \lVert \phi_x \rVert^2_{L^2(\mathbb{R}_+)}  \,d\tau + |u_-|\int_{0}^{t} \left|\phi_x(\tau,0)\right|^2 \,d\tau    \\
		&\qquad \leq C(\lVert \phi_{0x} \rVert^2_{L^2(\mathbb{R}_+)} + \lVert \psi_0 \rVert^2_{L^2(\mathbb{R}_+)} ) + C(\lVert \psi \rVert^2_{L^2(\mathbb{R}_+)}  + D_{u_1} + D_{\th_1} + \mathbf{G}^S) + C \varepsilon D_{u_2}+ Ce^{-C\delta \beta} .
		\end{aligned}
	\end{align*}
\end{proof}

	\subsection{$H^1$-estimates for $u - \util$}
	\begin{lemma}\label{lem:ux}
		Under the hypothesis of Proposition \ref{prop:main} (or Proposition \ref{prop:main2}), there exists a positive constant $C$ such that
		\begin{align*}
			\begin{aligned} 
				&\sup_{t \in [0,T]}\lVert \psi_x \rVert^2_{L^2(\mathbb{R}_+)}+\int_{0}^{T} \lVert \psi_{xx} \rVert^2_{L^2(\mathbb{R}_+)}\,dt \\
				&\qquad \leq C \lVert \psi_{0x} \rVert^2_{L^2(\mathbb{R}_+)} + C\int_{0}^{T} (D_{v_1}  + D_{u_1} + D_{\theta_1} + \mathbf{G}^S)\,dt + C\delta \int_{0}^{T}|\dot{\mathbf{X}}|^2 \,dt
				+ Ce^{-C\delta\beta},
			\end{aligned}
		\end{align*}
		where $D_{v_1}, D_{u_2}, D_{\theta_1}$ and $\mathbf{G}^S$ are the terms defined in \eqref{good_terms}.
	\end{lemma}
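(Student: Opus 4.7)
The plan is to perform an $H^1$ energy estimate on the momentum equation $\eqref{eq:pertubation2}_2$: dividing by $\rho$ (bounded away from zero by Proposition \ref{prop:local} and the a priori assumption \eqref{apriori_small}), multiplying by $-\psi_{xx}$, and integrating over $\R_+$. Two integrations by parts produce on the left the time derivative $\tfrac{1}{2}\tfrac{d}{dt}\|\psi_x\|_{L^2(\R_+)}^2$, the good diffusion $\int_{\R_+}\tfrac{\mu}{\rho}\psi_{xx}^2\,dx$ (which bounds $D_{u_2}$ from below), the convective contribution $\tfrac{1}{2}\int_{\R_+}u_x\psi_x^2\,dx$ (which, after writing $u_x=\psi_x+\bar u_x$ and applying Sobolev embedding on $\R_+$, reduces to $C(\delta+\varepsilon)(D_{u_1}+D_{u_2}+\mathbf{G}^S)$), and the boundary contributions $\psi_t(t,0)\psi_x(t,0)+\tfrac{1}{2}u(t,0)\psi_x^2(t,0)$.

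The interior right-hand side is routine. Young's inequality converts $\int_{\R_+}\tfrac{R\theta}{\rho}\phi_x\psi_{xx}\,dx$ into $\nu D_{u_2}+C\|\phi_x\|_{L^2(\R_+)}^2$, whose latter part is controlled by Lemma \ref{lem:rhox}. Splitting $\int_{\R_+}\tfrac{g}{\rho}\psi_{xx}\,dx$ via \eqref{est:g}, the piece $|\bar u_x|\,|(\phi,\psi,\vartheta)|$ reduces to $\nu D_{u_2}+C\mathbf{G}^S$, the piece $|\vartheta_x|(|\phi|+|\bar\rho|)$ reduces to $\nu D_{u_2}+CD_{\theta_1}$, and $|\bar u_{xx}|\,|\phi|$, via Lemma \ref{lem:viscous_shock} ($|\bar u_{xx}|\le C\delta|\bar u_x|$), reduces to $C\mathbf{G}^S$. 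The shift contribution $\int_{\R_+}\bar u_x\dot{\mathbf{X}}\psi_{xx}\,dx$ is bounded by $\nu D_{u_2}+C\delta|\dot{\mathbf{X}}|^2$ using Young's inequality together with $\|\bar u_x\|_{L^2(\R_+)}^2\le C\delta$.

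The main obstacle is the boundary contribution. Since both \eqref{outflow} and \eqref{imperable} prescribe $u(t,0)$ as time-independent, one has $\psi_t(t,0)=-\bar u_t(t,0)=(\sigma+\dot{\mathbf{X}})\,\bar u_\xi(-\sigma t-\mathbf{X}(t)-\beta)$; Lemma \ref{lem:viscous_shock} combined with the smallness of $\dot{\mathbf{X}}$ (yielding $-\sigma t-\mathbf{X}(t)-\beta\le-\tfrac{\sigma}{2}t-\beta$) gives $|\bar u_t(t,0)|\le C\delta^2 e^{-C\delta\beta}e^{-C\delta t}$, which is time-integrable with an $e^{-C\delta\beta}$-small bound. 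Using the trace inequality $|\psi_x(t,0)|^2\le 2\|\psi_x\|_{L^2(\R_+)}\|\psi_{xx}\|_{L^2(\R_+)}$, the a priori smallness $\|\psi_x\|_{L^2(\R_+)}\le\varepsilon$, and a Young's inequality with exponents $(4,4/3)$, one obtains $\int_0^T|\psi_t(\tau,0)\psi_x(\tau,0)|\,d\tau\le\nu\int_0^T D_{u_2}\,d\tau+Ce^{-C\delta\beta}$. The remaining boundary term $\tfrac{1}{2}u(t,0)\psi_x^2(t,0)$ vanishes identically in the impermeable case ($u_-=0$); in the outflow case, $|u_-|\le C\delta$ together with the same trace estimate yields $\tfrac{1}{2}|u_-|\int_0^T\psi_x^2(\tau,0)\,d\tau\le\nu\int_0^T D_{u_2}\,d\tau+C\delta^2\int_0^T D_{u_1}\,d\tau$, both of which are already controlled by Lemma \ref{lem:main}.

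Integrating the resulting identity in time on $[0,t]$ for $t\le T$, and choosing the small parameters $\nu$, $\delta$, and $\varepsilon$ so as to absorb the $\nu D_{u_2}$ contributions into the dissipation on the left-hand side, I arrive at the claimed $H^1$-estimate for $\psi_x$.
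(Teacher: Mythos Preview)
Your approach is essentially the paper's: multiply \eqref{eq:pertubation2}$_2$ by $-\psi_{xx}/\rho$, integrate, and control the boundary contribution $\psi_t(t,0)\psi_x(t,0)$ via the exponential decay of $\bar u_t(t,0)$ combined with the trace/interpolation inequality and a $(4,4/3)$ Young split---exactly as in \eqref{est:boundaryp}. The interior estimates for the $\phi_x$-, $g$-, and shift-contributions are handled the same way.

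The one inaccuracy is your claim $|u_-|\le C\delta$ in the outflow case. This is false: in Theorem~\ref{thm:outflow} the far-field state $(\rho_+,u_+,\theta_+)\in\Omega_{sub}^-\cup\Gamma_{trans}^-$ is a fixed $O(1)$ datum with $u_+<0$, and $u_-=u_++O(\delta)$ is likewise $O(1)$ negative. Your extra boundary term $\tfrac{|u_-|}{2}\psi_x^2(\tau,0)$ is still controllable---the trace estimate gives $|u_-|\,\|\psi_x\|_{L^2}\|\psi_{xx}\|_{L^2}\le\nu D_{u_2}+C_\nu D_{u_1}$ with an $O(1)$ constant---so the lemma survives, just not with the coefficient $C\delta^2$ you wrote. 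The paper sidesteps this altogether by \emph{not} integrating the convective term $u\psi_x\psi_{xx}$ by parts: it keeps it as is and bounds $\bigl|\int u\psi_x\psi_{xx}\,dx\bigr|\le\nu\|\psi_{xx}\|_{L^2}^2+C_\nu\|\psi_x\|_{L^2}^2$ directly using the boundedness of $u$, so no second boundary term appears and no dependence on the size of $u_-$ enters.
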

	\begin{proof}
	Multiplying $\eqref{eq:pertubation2}_2$ by $-\frac{\psi_{xx}}{\rho}$ and then integrating over $[0,t]\times \mathbb{R}_+$, we have 
	
\begin{align}\label{eq:pertubintu}
	\begin{aligned}
		\frac{1}{2} \int_{\mathbb{R}_+} \psi^2_x  \,dx + \int_{0}^{t}\int_{\mathbb{R}_+}\frac{\mu}{\rho} \psi^2_{xx} \,dx\,d\tau  &= 	\frac{1}{2} \int_{\mathbb{R}_+} \psi^2_{0x}  \,dx  +  \int_{0}^{t}\int_{\mathbb{R}_+} \left(\rho u \psi_x + R \theta \psi_x  -g\right)  \frac{\psi_{xx}}{\rho}dx\, d\tau   \\
	&\quad  -  \int_{0}^{t}\int_{\mathbb{R}_+}\bar{u}_x\psi_{xx}\dot{\mathbf{X}} \,dx\,d\tau - \int_{0}^{t} \psi_x \psi_t(\tau,0) \,d\tau.
	\end{aligned}
\end{align}
		We estimate the terms on the right-hand side of \eqref{eq:pertubintu}.
		Using Cauchy-Schwarz inequality and Young's inequality, we find that
		\begin{align*}
			\begin{aligned}
				&	\int_{0}^{t}\int_{\mathbb{R}_+} u\psi_x \psi_{xx} \,dx\,d\tau  \leq \nu \int_{0}^{t} \lVert \psi_{xx} \rVert_{L^2(\mathbb{R}_+)} \,d\tau+ C_{\nu} \int_{0}^{t} \lVert \psi_x \rVert_{L^2(\mathbb{R}_+)}  \,d\tau  , \\
				& 	\int_{0}^{t}\int_{\mathbb{R}_+}  R\theta \phi_x \frac{\psi_{xx}}{\rho} \,dx\,d\tau \leq \nu  \int_{0}^{t} \lVert \psi_{xx} \rVert_{L^2(\mathbb{R}_+)} + C_{\nu} \int_{0}^{t} \lVert \phi_x \rVert_{L^2(\mathbb{R}_+)} \,d\tau.
			\end{aligned}
		\end{align*}
		Here, $\nu>0$ is a constant to be chosen sufficiently small later, and $C_\nu$ denotes a constant depending on $\nu$.
		By virtue of \eqref{est:g}, Young's inequality and Cauchy-Schwarz inequality, we get 
		\begin{align*}
			\begin{aligned}
			& 	\left|	\int_{0}^{t}\int_{\mathbb{R}_+} g \frac{\psi_{xx}}{\rho} \,dx\,d\tau \right|  \\
			&  \leq 	C\int_{0}^{t}\int_{\mathbb{R}_+} |\bar{u}_x| |\psi_{xx}|^2 \,dx\,d\tau + C	\int_{0}^{t}\int_{\mathbb{R}_+} |\bar{u}_x| \left|(\phi, \psi, \vartheta)\right|^2\,dx\,d\tau   \\
			&\quad  + C\int_{0}^{t} \|\big(|\phi| + |\rhotil|\big)\|_{L^\infty(\Rp)}\lVert \vartheta_x \rVert_{L^2(\mathbb{R}_+)} \lVert \psi_{xx} \rVert_{L^2(\mathbb{R}_+)} d\tau + C\delta \bigg[ 	\int_{0}^{t}\int_{\mathbb{R}_+} |\bar{u}_x| \big(|\psi_{xx}|^2 + |\phi|^2\big) dx d\tau \bigg]  \\
			& \leq C(\delta + \varepsilon + \nu) \int_{0}^{t}  \lVert \psi_{xx} \rVert^2_{L^2(\mathbb{R}_+)} \,d\tau + C_\nu \int_{0}^{t}  \lVert \vartheta_{x} \rVert^2_{L^2(\mathbb{R}_+)} \,d\tau + C\mathbf{G}^S.
			\end{aligned}
		\end{align*}
		Again, $\nu>0$ denotes a small constant (to be chosen  later). Next, we use Young's inequality again to have
		\begin{align}\label{est:psix}
			\begin{aligned}
			\left| \int_{0}^{t}\int_{\mathbb{R}_+}\bar{u}_x\psi_{xx}\dot{\mathbf{X}} \,dx\,d\tau \right| & \leq C \int_{0}^{t}\int_{\mathbb{R}_+}  |\bar{u}_x| |\dot{\mathbf{X}}|^2 \,dx\,d\tau +  \int_{0}^{t}\int_{\mathbb{R}_+} |\bar{u}_x|   |\psi_{xx}|^2 \,dx\,d\tau   \\
			& \leq C\delta  \int_{0}^{t} |\dot{\mathbf{X}}|^2 \,d\tau + C\delta \int_{0}^{t}  \lVert \psi_{xx} \rVert^2_{L^2(\mathbb{R}_+)}  \,d\tau .
			\end{aligned}
		\end{align}
		Finally, interpolation inequality, Young's inequality, and Lemma \ref{lem:viscous_shock} yield that
		\begin{align}\label{est:boundaryp}
			\begin{aligned}
			 	\left|\int_{0}^{t} \psi_x \psi_t(\tau,0) \,d\tau\right| & \leq \int_{0}^{t} |\psi_t(\tau,0)| \lVert \psi_{x} \rVert_{L^{\infty}(\mathbb{R}_+)}  \,d\tau   \\
				& \leq C \int_{0}^{t} |\psi_t(\tau,0)| ^{\frac{4}{3}} \,d\tau + C\int_{0}^{t} \lVert \psi_{x} \rVert^2_{L^{2}(\mathbb{R}_+)} \lVert \psi_{xx} \rVert^2_{L^{2}(\mathbb{R}_+)}   \,d\tau  \\
				& \leq Ce^{-C\delta \beta} + C\varepsilon^{2}\int_{0}^{t} \lVert \psi_{xx} \rVert^2_{L^{2}(\mathbb{R}_+)}   \,d\tau .
			\end{aligned}
		\end{align}
		Here, for the last inequality in \eqref{est:boundaryp}, we use the fact that 
		\begin{align*}
			\int_0^t |\psi_t(\tau,0)|^{\frac{4}{3}}\,d\tau &= \int_0^t |\util_t(\tau,0)|^{\frac{4}{3}}\,d\tau \leq  C\int_0^t |\util'(-\sigma \tau - \mathbf{X}(\tau) - \beta)|^{\frac{4}{3}}|\sigma + \dot{\mathbf{X}}(\tau)|^{\frac{4}{3}}\,d\tau  \\
			&\leq C\int_0^t |\util'(-\sigma \tau - \mathbf{X}(\tau) - \beta)|^{\frac{4}{3}}\,d\tau \leq Ce^{-C\delta \beta}.
		\end{align*}
		
		Substituting the above estimates into \eqref{eq:pertubintu} and using the smallness of $\delta$ and $\varepsilon$, with $\nu>0$ chosen sufficiently small, we obtain the desired result.		
	\end{proof}

\subsection{$H^1$-estimates for $\th - \thtil$}
\begin{lemma}\label{lem:thetax}
		Under the hypothesis of Proposition \ref{prop:main} (or Proposition \ref{prop:main2}), there exists a positive constant $C$ such that
\begin{align*}
	\begin{aligned}
		& \sup_{t \in[0,T]}\lVert \vartheta_{x} \rVert^2_{L^{2}(\mathbb{R}_+)}  + \int_{0}^{T}  \lVert \vartheta_{xx} \rVert^2_{L^{2}(\mathbb{R}_+)}   \,dt  \\
		&\quad \leq C \lVert \vartheta_{0x} \rVert^2_{L^{2}(\mathbb{R}_+)}  +C\int_0^T \big(\mathbf{G}^S+ D_{u_1} + D_{\th_1} + \delta|\dot{\mathbf{X}}(\tau)|^2\big) \,dt + C\e \int_0^T D_{u_2}\, dt +  Ce^{-C\delta \beta},
	\end{aligned}
\end{align*}
where $D_{u_1}, D_{u_2}, D_{\th_1}$ and $\mathbf{G}^S$ are the terms defined in \eqref{good_terms}.
\end{lemma}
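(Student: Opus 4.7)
Following the pattern of Lemma \ref{lem:ux}, my plan is to multiply $\eqref{eq:pertubation2}_3$ by $-\vartheta_{xx}/\rho$ and integrate over $[0,t]\times\mathbb{R}_+$. Integration by parts in $x$, using $\vartheta_x\to 0$ as $x\to\infty$, gives $\int_{\mathbb{R}_+}\vartheta_t\vartheta_{xx}\,dx = -\vartheta_t(\tau,0)\vartheta_x(\tau,0) - \tfrac{1}{2}\frac{d}{d\tau}\|\vartheta_x\|_{L^2(\mathbb{R}_+)}^2$, which produces the energy identity
\begin{equation*}
\frac{R}{2(\gamma-1)}\|\vartheta_x(t,\cdot)\|_{L^2(\mathbb{R}_+)}^2 + \int_0^t\!\!\int_{\mathbb{R}_+}\frac{\kappa}{\rho}|\vartheta_{xx}|^2\,dx\,d\tau = \frac{R}{2(\gamma-1)}\|\vartheta_{0x}\|_{L^2(\mathbb{R}_+)}^2 + J_1 + J_2 + J_3,
\end{equation*}
where $J_1 := -\int_0^t\!\int_{\mathbb{R}_+}\frac{h}{\rho}\vartheta_{xx}\,dx\,d\tau$, $J_2 := -\frac{R}{\gamma-1}\int_0^t\!\int_{\mathbb{R}_+}\bar{\theta}_x\dot{\mathbf{X}}\vartheta_{xx}\,dx\,d\tau$, and $J_3 := -\frac{R}{\gamma-1}\int_0^t(\vartheta_t\vartheta_x)(\tau,0)\,d\tau$.

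For $J_1$, the pointwise bound \eqref{est:h} splits $h$ into a linear part controlled by $|\bar{u}_x||(\phi,\psi,\vartheta)| + |\psi_x| + |\vartheta_x|$ and a quadratic part $|\psi_x|^2$. The linear contributions are controlled by Young's inequality: the first piece, together with \eqref{shock_prop}, produces a small multiple of $\|\vartheta_{xx}\|_{L^2}^2$ plus $C\delta^2\mathbf{G}^S$, while the $|\psi_x|$ and $|\vartheta_x|$ pieces give $\nu\|\vartheta_{xx}\|_{L^2}^2 + C_\nu(D_{u_1}+D_{\theta_1})$ for small $\nu>0$ to be chosen later. For the shift integral $J_2$, the pointwise bound $|\bar{\theta}_x\dot{\mathbf{X}}\vartheta_{xx}|\le |\bar{\theta}_x|(|\dot{\mathbf{X}}|^2+|\vartheta_{xx}|^2)$ combined with $\|\bar{\theta}_x\|_{L^\infty}\le C\delta^2$ and $\|\bar{\theta}_x\|_{L^1}\le C\delta$ from Lemma \ref{lem:viscous_shock} yields $|J_2|\le C\delta\int_0^t|\dot{\mathbf{X}}|^2\,d\tau + C\delta^2\int_0^t\|\vartheta_{xx}\|_{L^2}^2\,d\tau$, accounting for the $\delta|\dot{\mathbf{X}}|^2$ contribution in the lemma's statement.

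The main obstacle is the quadratic piece $|\psi_x|^2|\vartheta_{xx}|$ inside $J_1$, which arises from the viscous dissipation term $\mu(u_x^2-\bar{u}_x^2)$ in $h$. To handle it, I would apply Cauchy-Schwarz and Young to obtain
\begin{equation*}
\int_0^t\!\!\int_{\mathbb{R}_+}|\psi_x|^2|\vartheta_{xx}|\,dx\,d\tau \le \nu\!\int_0^t\!\|\vartheta_{xx}\|_{L^2(\mathbb{R}_+)}^2\,d\tau + C_\nu\!\int_0^t\!\|\psi_x\|_{L^\infty(\mathbb{R}_+)}^2\|\psi_x\|_{L^2(\mathbb{R}_+)}^2\,d\tau,
\end{equation*}
and then use the 1D Sobolev embedding $\|\psi_x\|_{L^\infty(\mathbb{R}_+)}^2\le C\|\psi_x\|_{H^1(\mathbb{R}_+)}^2$ together with the a priori smallness $\|\psi_x\|_{L^2}\le\varepsilon$ from \eqref{apriori_small} to convert the second term into $\nu\int_0^t\|\vartheta_{xx}\|_{L^2}^2\,d\tau + C\varepsilon^2\int_0^t(D_{u_1}+D_{u_2})\,d\tau$. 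This is precisely the source of the $C\varepsilon\int_0^T D_{u_2}\,d\tau$ term on the right-hand side of the lemma; the key point is that without the $\varepsilon$ gained from \eqref{apriori_small}, this quadratic piece would not be absorbable.

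The boundary contribution $J_3$ is treated exactly as in \eqref{est:boundaryp}. Since $\theta(t,0)=\theta_-$ is fixed in both boundary settings, $\vartheta_t(\tau,0)=\bar{\theta}'(-\sigma\tau-\mathbf{X}(\tau)-\beta)(\sigma+\dot{\mathbf{X}}(\tau))$, which decays exponentially in $\delta\beta$ by Lemma \ref{lem:viscous_shock} and \eqref{bddx0}, giving $\int_0^t|\vartheta_t(\tau,0)|^{4/3}\,d\tau\le Ce^{-C\delta\beta}$. Combining this with the interpolation $\|\vartheta_x\|_{L^\infty}^4\le C\|\vartheta_x\|_{L^2}^2\|\vartheta_{xx}\|_{L^2}^2$, Young's inequality, and \eqref{apriori_small} yields $|J_3|\le Ce^{-C\delta\beta}+C\varepsilon^2\int_0^t\|\vartheta_{xx}\|_{L^2}^2\,d\tau$. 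Finally, choosing $\nu$ sufficiently small and using the smallness of $\delta$ and $\varepsilon$ to absorb every $\nu\|\vartheta_{xx}\|_{L^2}^2$ contribution into the coercive term $\int_0^t\!\int\frac{\kappa}{\rho}|\vartheta_{xx}|^2\,dx\,d\tau$ on the left-hand side delivers the desired estimate.
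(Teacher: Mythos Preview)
Your proposal is correct and follows essentially the same approach as the paper: multiply $\eqref{eq:pertubation2}_3$ by $-\vartheta_{xx}/\rho$, integrate, and bound the $h$-term via \eqref{est:h}, the shift term by Young's inequality with $\|\bar\theta_x\|_{L^1}\le C\delta$, and the boundary term exactly as in \eqref{est:boundaryp}. The only cosmetic difference is that the paper controls the quartic contribution via the Gagliardo--Nirenberg inequality $\|\psi_x\|_{L^4}^4\le\|\psi_x\|_{L^2}^3\|\psi_{xx}\|_{L^2}$ rather than your $L^\infty$-Sobolev route, but both yield the same $C\varepsilon^2(D_{u_1}+D_{u_2})$ bound.
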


\begin{proof}
Multiplying \eqref{eq:pertubation2}$_3$ by $ -\frac{1}{\rho} \vartheta_{xx}$ and integrating over  $[0,t]\times \mathbb{R}_+$ yield
 \begin{align}\label{est:thetax}
 	\begin{aligned}
 		& \frac{R}{2(\gamma-1)}  	\lVert \vartheta_{x} \rVert^2_{L^{2}(\mathbb{R}_+)}  + \int_{0}^{t}\int_{\mathbb{R}_+} \frac{\kappa}{\rho} |\vartheta_{xx}|^2 \,dx\,d\tau = \frac{R}{2(\gamma-1)}  	\lVert \vartheta_{0x} \rVert^2_{L^{2}(\mathbb{R}_+)}   \\
 		& - \int_{0}^{t}\int_{\mathbb{R}_+} \frac{h}{\rho} \vartheta_{xx} \,dx\,d\tau + \frac{R}{\gamma-1}\int_{0}^{t}\int_{\mathbb{R}_+} \bar{\theta}_x \vartheta_{xx}  \dot{\mathbf{X}}(\tau) \,dx\,d\tau - \frac{R}{\gamma-1} \int_{0}^{t} (\vartheta_t \vartheta_x)(\tau, 0) \,d\tau.
 	\end{aligned}
 \end{align}
First, there exists $c>0$ such that
\begin{align*}
	\left|\int_{0}^{t}\int_{\mathbb{R}_+} \frac{\kappa}{\rho} |\vartheta_{xx}|^2 \,dx\,d\tau\right| \geq c \int_{0}^{t} 	\lVert \vartheta_{xx} \rVert^2_{L^{2}(\mathbb{R}_+)} \,d\tau.
\end{align*}

Next, by virtue of \eqref{est:h} and Young's inequality, we obtain
\begin{align*}
	\begin{aligned}
	&	\left| \int_{0}^{t}\int_{\mathbb{R}_+} \frac{h}{\rho} \vartheta_{xx} \,dx\,d\tau \right|  \\
		&  \leq C\int_{0}^{t}\int_{\mathbb{R}_+} |\bar{u}_x| |(\phi, \psi, \vartheta)|^2 \,dx\,d\tau  +  C\int_{0}^{t}\int_{\mathbb{R}_+}  |\bar{u}_x| |\vartheta_{xx}|^2 \,dx\,d\tau + \nu  \int_{0}^{t}\int_{\mathbb{R}_+} |\vartheta_{xx}|^2 \,dx\,d\tau  \\
		&\quad + C_{\nu} \int_{0}^{t}\int_{\mathbb{R}_+} (|\psi_{x}|^2 +|\psi_x|^4 + |\vartheta_{x}|^2 )\,dx\,d\tau \\
		& \leq C\mathbf{G}^S + C(\delta + \nu) \int_{0}^{t} 	\lVert \vartheta_{xx} \rVert^2_{L^{2}(\mathbb{R}_+)}  \,d\tau + C_{\nu} \int_{0}^{t} (	\lVert \psi_{x} \rVert^2_{L^{2}(\mathbb{R}_+)} + \|\psi_x\|_{L^4(\Rp)}^4 + \lVert \vartheta_{x} \rVert^2_{L^{2}(\mathbb{R}_+)}) \,d\tau .
	\end{aligned}
\end{align*}
Using the interpolation inequality and \eqref{apriori_small}, we have 
\begin{align*}
	\|\psi_x\|_{L^4(\Rp)}^4 &\leq \|\psi_x\|_{L^2(\Rp)}^3 \|\psi_{xx}\|_{L^2(\Rp)} \leq C\e^2\|\psi_x\|_{L^2(\Rp)} \|\psi_{xx}\|_{L^2(\Rp)}\\
	&\leq C\e^2\big(\|\psi_x\|_{L^2(\Rp)}^2+ \|\psi_{xx}\|_{L^2(\Rp)}^2\big).
\end{align*}
Thus, we have 
\begin{align*}
	\left| \int_{0}^{t}\int_{\mathbb{R}_+} \frac{h}{\rho} \vartheta_{xx} \,dx\,d\tau \right| &\leq C\mathbf{G}^S +C(\delta + \nu)\int_{0}^{t} \lVert \vartheta_{xx} \rVert^2_{L^{2}(\mathbb{R}_+)}  \,d\tau + C\e^2 \int_0^t \|\psi_{xx}\|_{L^2(\Rp)}^2\,d\tau \\
	&\quad + C \int_{0}^{t} (	\lVert \psi_{x} \rVert^2_{L^{2}(\mathbb{R}_+)} + \lVert \vartheta_{x} \rVert^2_{L^{2}(\mathbb{R}_+)}) \,d\tau.
\end{align*}
Similar to \eqref{est:psix}, we have 
\begin{align*}
	\left| \int_{0}^{t}\int_{\mathbb{R}_+} \bar{\theta}_x \vartheta_{xx}  \dot{\mathbf{X}}(\tau) \,dx\,d\tau \right| \leq C\delta  \int_{0}^{t} |\dot{\mathbf{X}}|^2 \,d\tau + C\delta \int_{0}^{t}  \lVert \vartheta_{xx} \rVert^2_{L^2(\mathbb{R}_+)}  \,d\tau .
\end{align*}
For the boundary term, using a similar method to \eqref{est:boundaryp}, we have 
\begin{align*} 
	\begin{aligned}
		& 	\left|\int_{0}^{t} \vartheta_x \vartheta_t(\tau,0) \,d\tau\right|   \leq Ce^{-C\delta \beta} + C\varepsilon^{2}\int_{0}^{t} \lVert \vartheta_{xx} \rVert^2_{L^{2}(\mathbb{R}_+)}   \,d\tau .
	\end{aligned}
\end{align*}
Plugging the above estimates into \eqref{est:thetax} and using the smallness of $\delta, \nu$ and $\varepsilon$, we obain the desired result.
\end{proof}
\vspace{0.3cm}
\noindent $\bullet$ Proof of Proposition \ref{prop:main} and Proposition \ref{prop:main2}

Combining Lemma \ref{lem:main}, Lemma \ref{lem:rhox}, Lemma \ref{lem:ux}, and Lemma \ref{lem:thetax}, we obtain the estimate \eqref{est:apriori} for both IBVPs \eqref{outflow} and \eqref{imperable}.

In addition, \eqref{bddx12} follows directly from the definition of the shift in \eqref{def_shift}:
\[
|\dot{\mathbf{X}}| \leq \frac{C}{\delta}\|(\rho -\rhotil, u-\util, \th - \thtil)\|_{L^\infty{(\Rp)}}\intRp |(\rhotil_x,\util_x,\thtil_x)|\,dx \leq C\e.
\]
Therefore, Proposition \ref{prop:main} and Proposition \ref{prop:main2} hold.

\qed
\begin{appendix}
	\section{Relative entropy}\label{Appendix A}
	\setcounter{equation}{0}
Let $U = (\r, m, E)$ with $m = \r u$, $E = \r\left(e + \frac{u^2}{2}\right)$ and $e = \frac{R}{\gamma - 1}\theta + const$. In this section, we compute the relative entropy weighted by $\thtil$. First, recalling from the Gibbs relation $\theta d s = d e + p d \left(\frac{1}{\r}\right)$, the entropy $s(U)$ takes the form:
\[
s(U) = -R \ln \r + \frac{R}{\gamma - 1} \ln \theta.
\] 
Using the Gibbs relation and $E = \r \left( e + \frac{u^2}{2} \right)$, we have
\begin{align*}
	\th \,ds &= \left(-\frac{p}{\r^2} - \frac{E}{\r^2} + \frac{m^2}{\r^3} \right)\,d\rho  - \frac{m}{\r^2}\,dm + \frac{1}{\r}\,dE\\
	&= \left( -R\frac{\th}{\r}- \frac{R}{\gamma - 1}\frac{\th}{\r} + \frac{u^2}{2\r} \right)d\rho - \frac{u}{\r}dm + \frac{1}{\r}\,dE.
\end{align*}
This implies that
\[
\nabla_U s(U) = \left(-\frac{R}{\r} - \frac{R}{\gamma - 1}\frac{1}{\r} + \frac{u^2}{2\r \th}, -\frac{u}{\r \th}, \frac{1}{\r \th} \right).
\]
Then for any $\Util = (\rhotil, \bar{m}, \bar{E})$ with $\bar{m} = \rhotil \util$, $\bar{E} = \r\left(\bar{e} + \frac{\util^2}{2}\right)$ and $\bar{e} = \frac{R}{\gamma - 1}\thtil + const$, we have
\begin{equation*}
\begin{aligned}
	\bar{\th}\,d(-\r s)(\bar{U})
	&= -\thtil s(\bar{U})\,d\r + \thtil \rhotil \,d(-s)(\bar{U})\\
	&=\Big( -\thtil s(\bar{U}) + R\thtil + \frac{R}{\gamma - 1}\thtil -\frac{\util^2}{2}\Big)\,d\r +\util \,dm - \,dE.
\end{aligned}
\end{equation*}
Now, we introduce the mathematical entropy $\eta = -\r s$. Then we obtain
\begin{equation}\label{grad-ent}
	\thtil \nabla_U \eta(\bar{U}) = \Big( -\thtil s(\bar{U}) + R\thtil + \frac{R}{\gamma - 1}\thtil -\frac{\util^2}{2},\util, -1\Big).
\end{equation}
Using \eqref{grad-ent}, we have
\begin{align*}
	\thtil \eta(U|\bar{U}) &= \thtil \Big(\eta(U) - \eta(\bar{U}) -  \nabla_U \eta(\bar{U})\cdot(U-\bar{U}) \Big)\\
	&=\thtil \left(R\r \ln \r - \frac{R}{\gamma -1}\r \ln \theta - R \rhotil \ln \rhotil + \frac{R}{\gamma - 1}\rhotil \ln \thtil \right)\\
	&\quad + \left(\thtil s(\bar{U}) - R\thtil - \frac{R}{\gamma  - 1}\thtil + \frac{\util^2}{2}\right)(\r - \rhotil) - \util (m - \bar{m}) + \big(E -\bar{E})\\
	&=\rho \left[R\bar{\theta} \left( \frac{\bar{\rho}}{\rho }-\ln \frac{\bar{\rho}}{\rho}  -1 \right) + \frac{R}{\gamma -1} \bar{\theta} \left(\frac{\theta}{\bar{\theta}} -\ln \frac{\theta}{\bar{\theta}} -1\right) + \frac{1}{2}(u-\bar{u})^2\right].
\end{align*}
\qed
	\section{Sharp estimate for the diffusion}\label{appjacobian}
	We here present the approximation of Jacobian $\frac{dy}{dx}$ to estimate the diffusion terms $D_{u_1}$ and $D_{\th_1}$ in Lemma \ref{lma:leading}. To this end, we first integrate the system \eqref{eq:Vshock} over $(\pm \infty,\xi]$, which leads to the following system:
\begin{equation}\label{eq:vshock2}
	\begin{aligned}
		&-\sigma (\bar{\rho} -\rho_\pm) + (\bar{\rho} \bar{u}- \rho_\pm u_\pm) =0, \\
		&-\sigma (\bar{\rho}\bar{u} - \rho_\pm u_\pm) + (\bar{\rho}\bar{u}^2 + \bar{p} - \rho_\pm u_\pm^2-p_\pm) =\mu \bar{u}', \\
		&-\sigma (\bar{E}- E_\pm) + (\bar{E}\bar{u} +\bar{p}\bar{u} - E_\pm u_\pm - p_\pm u_\pm)=\kappa \bar{\theta}' + \mu \bar{u}\bar{u}' .
	\end{aligned}
\end{equation}
From this system, we can derive the following autonomous system of ODEs:
\begin{equation}\label{eq:vshock1}
	\begin{aligned}
		& \mu \bar{u}' = \rho_\pm (u_\pm - \sigma)(\bar{u}- u_\pm) + \bar{p} -p_\pm,\\ 
		& \kappa \bar{\theta}' = \rho_\pm(u_\pm-\sigma)\Big[\bar{e}- e_\pm -\frac{1}{2}(\bar{u} -u_\pm)^2\Big] + p_\pm(\bar{u}-u_\pm).
	\end{aligned}
\end{equation}
	\setcounter{equation}{0}
	\begin{lemma}\label{lem:Jac}
		Let $y$ defined as in \eqref{ydef} and $\delta$ be the shock strength defined in Lemma \ref{lem:viscous_shock}. Then it holds that 
		\begin{equation*}
		\left|\mu\frac{1}{y(1-y)}\frac{dy}{dx}-\frac{\gamma+1}{2}\rho_-\frac{\mu R \gamma}{\mu R \gamma+\kappa(\gamma-1)^2} \delta\right|	\leq C\delta^2.
		\end{equation*}
	\end{lemma}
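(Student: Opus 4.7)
The plan is to reduce the profile system \eqref{eq:vshock1} to a scalar Burgers-type equation for $\util$ of the form
\[
\mu^{\ast}\,\util' \;=\; -\frac{(\gamma+1)\rho_{-}}{2}\,(u_{-}-\util)(\util-u_{+}) \;+\; O(\delta^{3}),
\qquad \mu^{\ast}:=\mu+\frac{(\gamma-1)^{2}\kappa}{\gamma R}.
\]
Once this is established, the lemma follows by dividing by $(u_{-}-\util)(\util-u_{+})=\delta^{2}y(1-y)$, using $dy/dx=-\util_{x}/\delta$, and noting that $\mu/\mu^{\ast}=\mu R\gamma/(\mu R\gamma+(\gamma-1)^{2}\kappa)$.

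To derive the scalar equation, I set $w:=\util-u_{-}$ and $a_{-}:=u_{-}-\sigma$, and use the conserved quantity $m:=\rho_{-}a_{-}=\rhotil(\util-\sigma)$ coming from \eqref{eq:vshock2} to obtain the exact identity $\tilde{\rho}:=\rhotil-\rho_{-}=-\rho_{-}w/(a_{-}+w)$. By \eqref{est:sigma}, I may write $a_{-}=-\sqrt{\gamma R\theta_{-}}+\alpha\delta+O(\delta^{2})$ for a coefficient $\alpha$ to be fixed below from the far-field condition. Since $\thtil'=O(\delta^{2})$ by Lemma \ref{lem:viscous_shock}, solving the energy equation of \eqref{eq:vshock1} for $\tilde{\theta}:=\thtil-\theta_{-}$ yields
\[
\tilde{\theta} \;=\; -\frac{(\gamma-1)\theta_{-}}{a_{-}}\,w \;+\; \frac{(\gamma-1)\kappa}{mR}\,\thtil' \;+\; \frac{\gamma-1}{2R}\,w^{2},
\]
where the last two terms are $O(\delta^{2})$; differentiating the leading part also gives $\thtil'=-(\gamma-1)\theta_{-}a_{-}^{-1}\util'+O(\delta^{3})$.

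Plugging $\tilde{\rho}$ and $\tilde{\theta}$ into the momentum equation $\mu\util'=mw+R(\rho_{-}\tilde{\theta}+\theta_{-}\tilde{\rho}+\tilde{\rho}\tilde{\theta})$ and Taylor-expanding $(a_{-}+w)^{-1}$ to second order in $w/a_{-}$, I collect the contributions by order in $\delta$. The coefficient of $w$ collapses to $\rho_{-}(a_{-}^{2}-\gamma R\theta_{-})/a_{-}=2\alpha\delta\rho_{-}+O(\delta^{2})$; the coefficient of $w^{2}$ becomes $\tfrac{(\gamma+1)\rho_{-}}{2}+O(\delta)$; and the $\thtil'$ term supplies exactly $\tfrac{(\gamma-1)^{2}\kappa}{\gamma R}\util'+O(\delta)\util'$, upgrading $\mu$ to $\mu^{\ast}$. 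Evaluating at $w\to-\delta$ (where $\util'\to 0$) forces $\alpha=(\gamma+1)/4$, whereupon the right-hand side factors as $\tfrac{(\gamma+1)\rho_{-}}{2}w(w+\delta)=-\tfrac{(\gamma+1)\rho_{-}}{2}(u_{-}-\util)(\util-u_{+})$, proving the scalar equation modulo $O(\delta^{3})$.

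The hard part is the bookkeeping in this collection step: several seemingly $O(\delta)$ contributions to the coefficient of $w$ are in fact $O(\delta^{2})$ due to the sound-speed cancellation $a_{-}^{2}=\gamma R\theta_{-}+O(\delta)$, so $\sigma$ must be expanded one order beyond $\sigma_{-}$ to extract the Burgers coefficient, and the $\thtil'$ term must be tracked — it is exactly what produces the $\kappa$-correction in $\mu^{\ast}$. All cross terms such as $\tilde{\rho}\tilde{\theta}$ and $\thtil' w$ must be retained at this order, after which the identities collapse cleanly to give the factored right-hand side.
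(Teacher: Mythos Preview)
Your route is genuinely different from the paper's and is essentially sound, but there is one real gap in the error control that you should fix. When you divide the scalar identity by $(u_--\util)(\util-u_+)=\delta^2 y(1-y)$, an undifferentiated $O(\delta^3)$ remainder becomes $O(\delta^2)/(y(1-y))$, which is \emph{not} uniformly $O(\delta^2)$ as $y\to 0$ or $y\to 1$. The statement of the lemma asks for a uniform bound, so this step as written does not close. The repair is already implicit in your argument: after moving the $\thtil'$ contribution to the left, the right-hand side is an \emph{exact} smooth function $F(w)$ of $w=\util-u_-$ alone, and since $\util',\thtil'\to 0$ at both ends you have $F(0)=F(-\delta)=0$ exactly. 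Hence $F(w)=w(w+\delta)H(w)$ with $H$ smooth, and your computation of the $w$- and $w^2$-coefficients shows $H(w)=\tfrac{(\gamma+1)\rho_-}{2}+O(\delta)$ uniformly for $|w|\le\delta$. Dividing this exact factorization by $w(w+\delta)$ then yields the claim with a genuine $O(\delta^2)$ error; the effective-viscosity correction $\mu^\ast+O(\delta)$ is harmless because the resulting equation for $\tfrac{1}{y(1-y)}\tfrac{dy}{dx}$ has a coefficient bounded away from zero.

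For comparison, the paper avoids this division issue entirely by working symmetrically with \emph{both} the $\pm$ integrated profile equations. Starting from the identity
\[
\mu\,\frac{1}{y(1-y)}\frac{dy}{dx}=\frac{\mu\,\util'}{\util-u_-}-\frac{\mu\,\util'}{\util-u_+}=\frac{\bar p-p_-}{\util-u_-}-\frac{\bar p-p_+}{\util-u_+},
\]
each fraction is individually $O(1)$, and after inserting the exact formulas for $\thtil-\theta_\pm$ from the integrated energy equation, the $\thtil'$ pieces recombine into a multiple of $\tfrac{1}{y(1-y)}\tfrac{dy}{dx}$ itself. One solves the resulting linear relation for $\tfrac{1}{y(1-y)}\tfrac{dy}{dx}$ directly; no division by a vanishing quantity ever occurs, and there is no need to expand $\sigma$ beyond $\sigma_-$ or to identify $\alpha$. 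Your one-sided Burgers reduction is more hands-on and gives the same constant, but the two-sided subtraction is structurally cleaner for exactly the endpoint issue above.
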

	\begin{proof}
It follows from the definition of $\delta$ and $y$ that
\begin{align}
	\begin{aligned} \label{mubaru'}
		\mu \frac{1}{y(1-y)}\frac{dy}{dx}&= -\frac{\mu}{\delta} \left(\frac{\bar{u}'}{y} + \frac{\bar{u}'}{1-y}\right) =\mu \left(\frac{\bar{u}'}{\bar{u}-u_-}-\frac{\bar{u}'}{\bar{u}-u_+}\right).
	\end{aligned}
\end{align}
	Then substituting \eqref{eq:vshock1}$_1$ into \eqref{mubaru'}, we have 
	\begin{align}
		\begin{aligned} \label{firstexj}
			\mu \left(\frac{\bar{u}'}{\bar{u}-u_-}-\frac{\bar{u}'}{\bar{u}-u_+}\right)&=(u_--\sigma)\rho_-+\frac{\bar{p}-p_-}{\bar{u}-u_-}-(u_+-\sigma)\rho_+-\frac{\bar{p}-p_+}{\bar{u}-u_+}\\
			&=\frac{\bar{p}-p_-}{\bar{u}-u_-}-\frac{\bar{p}-p_+}{\bar{u}-u_+}.
		\end{aligned}    
	\end{align}
	Using 
	\begin{align*}
		\bar{p}-p_\pm=&R\bar{\rho}\bar{\theta}-R\rho_\pm\theta_\pm    
		=R\bar{\rho}(\bar{\theta}-\theta_\pm)+R\theta_\pm (\bar{\rho}-\rho_\pm ),
	\end{align*}
	we have
	\begin{align}
		\begin{aligned} \label{partIII}
			\frac{\bar{p}-p_-}{\bar{u}-u_-}-\frac{\bar{p}-p_+}{\bar{u}-u_+}&=\left(R\bar{\rho}\frac{\bar{\theta}-\theta_-}{\bar{u}-u_-}+R\theta_-\frac{\bar{\rho}-\rho_-}{\bar{u}-u_-}\right)-\left(R\bar{\rho}\frac{\bar{\theta}-\theta_+}{\bar{u}-u_+}+R\theta_+\frac{\bar{\rho}-\rho_+}{\bar{u}-u_+}\right)\\
			&=R\bar{\rho}\left(\frac{\bar{\theta}-\theta_-}{\bar{u}-u_-}-\frac{\bar{\theta}-\theta_+}{\bar{u}-u_+}\right)+\left(R\theta_-\frac{\bar{\rho}-\rho_-}{\bar{u}-u_+}-R\theta_+\frac{\bar{\rho}-\rho_+}{\bar{u}-u_+}\right)\\
			&=:I+ II.
		\end{aligned}
	\end{align}
First, we estimate the second part $II$. By virtue of $\eqref{eq:vshock2}_1$, we have
	\begin{align*}
		 \bar{\rho}-\rho_-=\frac{\rho_-}{\sigma-\bar{u}}(\bar{u}-u_-).
	\end{align*}
	Hence, we obtain
	\begin{align*}
		II=&R\theta_-\frac{\rho_-}{\sigma-\bar{u}}-R\theta_+\frac{\rho_+}{\sigma-\bar{u}}  =\frac{p_--p_+}{\sigma-\bar{u}} \\
		= & \rho_-(u_--u_+)\frac{\sigma-u_-}{\sigma-\bar{u}} = \rho_-\delta \left(1+ O(\delta)\right)= \rho_-\delta + O(\delta^2).
	\end{align*}
Here, we used the fact:
	\begin{equation}\label{eq:p-p}
		\r_- (u_- - \sigma) = \r_+(u_+ - \sigma), \quad \text{and} \quad p_--p_+=(\sigma-u_-)\rho_-(u_--u_+),
	\end{equation}
which can be derived from  \eqref{eq:RH}.
	
For the first part $I$, it holds from $\eqref{eq:vshock1}_2$ that
	\begin{align*}
		\bar{\theta}-\theta_\pm=&\frac{\gamma-1}{R}\left[\frac{1}{2}(\bar{u}-u_\pm)^2+\frac{\kappa\bar{\theta}'}{\rho_\pm(u_\pm-\sigma)}-\frac{p_\pm(\bar{u}-u_\pm)}{\rho_\pm(u_\pm-\sigma)}\right].
	\end{align*}
	Thus, we have
	\begin{align*}
	&\frac{\bar{\theta}-\theta_-}{\bar{u}-u_-}-\frac{\bar{\theta}-\theta_+}{\bar{u}-u_+}\\
	&\quad =\frac{\gamma-1}{R}\left[\frac{1}{2}(\bar{u}-u_-)-\frac{p_-}{\rho_-(u_--\sigma)}+\frac{\kappa \bar{\theta}'}{\rho_-(u_--\sigma)(\bar{u}-u_-)}\right]\\
	&\qquad -\frac{\gamma-1}{R}\left[\frac{1}{2}(\bar{u}-u_+)-\frac{p_+}{\rho_-(u_--\sigma)}+\frac{\kappa \bar{\theta}'}{\rho_-(u_--\sigma)(\bar{u}-u_+)}\right]\\
	&\quad =-\frac{\gamma-1}{R}\left[\frac{1}{2}(u_--u_+)+\frac{p_--p_+}{\rho_-(u_--\sigma)}\right] +\underbrace{\frac{\gamma-1}{R}\frac{\kappa\bar{\theta}'}{\rho_-(u_--\sigma)}\left[\frac{1}{\bar{u}-u_-}-\frac{1}{\bar{u}-u_+}\right]}_{=:\mathcal{A}}.
	\end{align*}
	To estimate $\mathcal{A}$, using \eqref{estderi} and \eqref{est:sigma}, we have
	\begin{equation*}
		\bar{\theta}'=\left(\frac{\gamma-1}{\sigma-u_-}\theta_-+O(\delta)\right)\bar{u}'.    
	\end{equation*}
Substituting this into $\mathcal{A}$, and by \eqref{eq:p-p} and \eqref{mubaru'}, we have
\begin{align*}
	&\frac{\bar{\theta}-\theta_-}{\bar{u}-u_-}-\frac{\bar{\theta}-\theta_+}{\bar{u}-u_+} \\
	&=\frac{\gamma-1}{R}\left(-\frac{1}{2}\delta+\delta\right)+\frac{\gamma-1}{R}\frac{\kappa}{\rho_-(u_--\sigma)}\left(\frac{\gamma-1}{\sigma-u_-}\theta_-+O(\delta)\right)\bar{u}'\left[\frac{1}{\bar{u}-u_-}-\frac{1}{\bar{u}-u_+}\right]\\
	&=\frac{\gamma-1}{2R}\delta-\left(\frac{\kappa(\gamma-1)^2}{R\rho_-(\sigma-u_-)^2}\theta_-+O(\delta)\right)\bar{u}'\left[\frac{1}{\bar{u}-u_-}-\frac{1}{\bar{u}-u_+}\right]\\
	&=\frac{\gamma-1}{2R}\delta-\left(\frac{\kappa(\gamma-1)^2}{R\rho_-(\sigma-u_-)^2}\theta_-+O(\delta)\right)\frac{1}{y(1-y)}\frac{dy}{dx}.
\end{align*}
Therefore, combining \eqref{mubaru'}, \eqref{firstexj}, and \eqref{partIII}, we conclude
\begin{align*}
	\mu\frac{1}{y(1-y)}\frac{dy}{dx} &= \rho_-\delta+O(\delta^2) +R(\rho_-+O(\delta))\left[\frac{\gamma-1}{2R}\delta-\left(\frac{\kappa(\gamma-1)^2}{R\rho_-(\sigma-u_-)^2}+O(\delta)\right)\frac{1}{y(1-y)}\frac{dy}{dx}\right]\\
	&=\rho_-\delta+O(\delta^2)+\frac{(\gamma-1)\rho_-}{2}\delta-\left(\frac{\kappa(\gamma-1)^2}{(\sigma-u_-)^2}+O(\delta)\right)\frac{1}{y(1-y)}\frac{dy}{dx}\\
	&=\frac{\gamma+1}{2}\rho_-\delta-\left(\frac{\kappa(\gamma-1)^2}{(\sigma-u_-)^2}\theta_-+O(\delta)\right)\frac{1}{y(1-y)}\frac{dy}{dx}+O(\delta^2).
\end{align*}
Hence, using \eqref{est:sigma}, we obtain the desired result:
\begin{equation*}
	\mu\frac{1}{y(1-y)}\frac{dy}{dx}=\frac{\gamma+1}{2}\rho_-\frac{\mu R \gamma}{\mu R \gamma+\kappa(\gamma-1)^2}\delta+O(\delta^2).
\end{equation*}
\end{proof}
\end{appendix}

 \bibliographystyle{acm}
	\bibliography{ref}

\end{document}